\newcommand{\bpm}{\begin{pmatrix}}
\newcommand{\epm}{\end{pmatrix}}
\newcommand{\bsm}{\begin{smallmatrix}}
\newcommand{\esm}{\end{smallmatrix}}
\newcommand{\bspm}{\left(\begin{smallmatrix}}
\newcommand{\espm}{\end{smallmatrix}\right)}
\newcommand{\bbm}{\begin{bmatrix}}
\newcommand{\ebm}{\end{bmatrix}}
    \newcommand{\BC}{{\mathbb {C}}} 
     \newcommand{\BF}{{\mathbb {F}}}
     \newcommand{\sH}{{\mathscr {H}}}
    \newcommand{\CA}{{\mathcal {A}}}
    \newcommand{\CS}{{\mathcal {S}}} 
    \newcommand{\CW}{{\mathcal {W}}}
    \newcommand{\RG}{{\mathrm {G}}}
    \newcommand{\RO}{{\mathrm {O}}}
    \newcommand{\RU}{{\mathrm {U}}}
     \newcommand{\bx}{{\bf {x}}}  \newcommand{\St}{{\mathrm {St}}}
      \newcommand{\bG}{{\mathrm {G}}}
\newcommand{\pr}{{\mathrm {pr}}}
     \newcommand{\Nm}{{\mathrm {Nm}}}
    \newcommand{\lenth}{{\mathrm {\lenth}}}
    \newcommand{\End}{{\mathrm{End}}} 
    \newcommand{\Gal}{{\mathrm{Gal}}} \newcommand{\GL}{{\mathrm{GL}}}
    \newcommand{\Hom}{{\mathrm{Hom}}} 
    \newcommand{\Ind}{{\mathrm{Ind}}}
      \newcommand{\Tr}{{\mathrm{Tr}}}
    \newcommand{\id}{{\mathrm{id}}}
    \newcommand{\Sp}{{\mathrm{Sp}}}  \newcommand{\Fr}{{\mathrm{Fr}}}
    \newcommand{\diag}{{\mathrm{diag}}}\newcommand{\ch}{{\mathrm{ch}}}
 \newcommand{\SL}{{\mathrm{SL}}}
 \newcommand{\SO}{{\mathrm{SO}}}
    \newcommand{\wt}{\widetilde}  
    \newcommand{\pair}[1]{\langle {#1} \rangle}
    \newcommand{\wpair}[1]{\left\{{#1}\right\}}
    \newcommand{\ov}{\overline}
    \newcommand{\incl}{\hookrightarrow}
     \newcommand{\ra}{\rightarrow}
    \theoremstyle{plain}
    \newtheorem{thm}{Theorem}[section] \newtheorem{cor}[thm]{Corollary}
    \newtheorem{lem}[thm]{Lemma}  \newtheorem{prop}[thm]{Proposition}
    \newtheorem {conj}[thm]{Conjecture} 
    \newtheorem{rmk}[thm]{Remark}
     \newtheorem*{thmA}{Theorem A}
     \newtheorem*{thmB}{Theorem B}
    \numberwithin{equation}{section}
\title[Uniqueness of certain Fourier-Jacobi models]{Uniqueness of certain Fourier-Jacobi models over finite fields}
\author{Baiying Liu}
\address{Department of Mathematics, Purdue University, West Lafayette, IN, USA, 47907
}
\email{liu2053@purdue.edu}
\author{Qing Zhang}
\address{Department of Mathematics and Statistics,  University of Calgary,
Calgary, Alberta, Canada, T2N 1N4 }
\email{qing.zhang1@ucalgary.ca}
\subjclass[2010]{Primary 20C33; Secondary 20G40}
\keywords{multiplicity one, Fourier-Jacobi model, exceptional group $\RG_2$}
\begin{document}

\maketitle
\setcounter{tocdepth}{1}

\begin{abstract}
    In this paper, we prove the uniqueness of certain Fourier-Jacobi models for the split  exceptional group $\bG_2$ over finite fields with odd characteristic. Similar results are also proved for $\Sp_4$ and $\RU_4$.
\end{abstract}


\section{Introduction}\label{sec1}
Uniqueness of Bessel models and Fourier-Jacobi models for classical groups over local fields, recently proved in \cite{AGRS,LS,Su,SZ,GGP} for various cases, has played very important roles in the study of automorphic representations and $L$-functions for classical groups. These uniqueness results are 
the starting points of the local Gan-Gross-Prasad conjectures \cite[Conjecture 17.1]{GGP}, and 
the key ingredients to construct new Rankin-Selberg integrals on these groups (see \cite{GPSR,GJRS,JZ} for some examples).
They also give local functional equations, and thus local gamma factors, for many long-known local zeta integrals for these groups (see \cite{Ka} for example). 

After proving the uniqueness of Bessel models and Fourier-Jacobi models for classical groups, a natural question to ask is whether one could define similar models for exceptional groups and prove analogous uniqueness. For the simplest exceptional group, the split group of type $\bG_2$, we can define a Fourier-Jacobi model which is quite similar to the $\Sp_4$ case. 

Let $k$ be a local field, and $\alpha,\beta$ be the two roots of $\bG_2(k)$ with $\alpha$ the short root and $\beta$ the long root, and let $P=MV$ be the parabolic subgroup of $\bG_2(k)$ with $ \beta$ in its Levi subgroup $M$, where $V$ is the unipotent part of $P$. Then one has $M\cong \GL_2(k).$ Let $J=\SL_2(k)\ltimes V\subset P$. Then one can check that there is a projection map $J\ra \SL_2(k)\ltimes \sH$, where $\sH$ is the Heisenberg group with 3 variables. For a non-trivial additive character $\psi$ of $k$, one then has a Weil representation $\omega_\psi$ of $\wt\SL_2(k)\ltimes\sH$, where $\wt\SL_2(k)$ is the metaplectic cover of $\SL_2(k)$. Thus we can view $\omega_\psi$ as a representation of $\wt J=\wt\SL_2(k)\ltimes V$. Given a genuine irreducible representation $\pi$ of $\wt\SL_2(k)$, the tensor product $\pi\otimes \omega_\psi$ gives a representation of $J$. Let $\Pi$ be an irreducible representation of $\bG_2(k)$, 
a non-trivial element in $\mathrm{Hom}_{\bG_2(k)}(\Pi, \Ind_{J}^{\bG_2(k)}(\pi\otimes\omega_\psi))$
is called a Fourier-Jacobi model of $\Pi$. 
By Frobenius reciprocity, 
$$\mathrm{Hom}_{\bG_2(k)}(\Pi, \Ind_{J}^{\bG_2(k)}(\pi\otimes\omega_\psi)) = \Hom_J(\Pi,\pi\otimes\omega_\psi).$$
 We conjecture that these Hom spaces should have dimension at most one over local fields, at least for self-dual irreducible representation $\Pi$, see Conjecture \ref{conj5.2}. It is worthwhile to mention that Ginzburg \cite{Gi} has constructed a local zeta integral which naturally lies in these Hom spaces. 

The main goal of this paper is to consider the analogue conjecture over finite fields $k$ with odd characteristic and 
verify certain uniqueness of Fourier-Jacobi models for $\bG_2(k)$. Precisely, we prove the following 

\begin{thmA}[Theorem \ref{thm6.1}]
Let $k$ be a finite field with odd characteristic. Let $\pi$ be an irreducible representation of $\SL_2(k)$ which is not fully induced from the Borel subgroup. Then $\Ind_J^{\bG_2(k)}(\pi\otimes\omega_\psi)$ is multiplicity free. 
\end{thmA}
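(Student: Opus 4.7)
My plan is to establish multiplicity freeness by proving that the endomorphism algebra $\End_{\bG_2(k)}(\Ind_J^{\bG_2(k)}(\pi\otimes\omega_\psi))$ is commutative. By Mackey's formula this algebra is naturally isomorphic to
$$\CH \;=\; \bigoplus_{g\in J\backslash \bG_2(k)/J}\Hom_{J\cap gJg^{-1}}\bigl(\pi\otimes\omega_\psi,\,{}^{g}(\pi\otimes\omega_\psi)\bigr),$$
equipped with its usual convolution product, and multiplicity freeness of $\Ind_J^{\bG_2(k)}(\pi\otimes\omega_\psi)$ is equivalent to commutativity of $\CH$.

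I would attack this via the classical Gelfand--Kazhdan anti-involution method: construct an anti-involution $\theta$ of $\bG_2(k)$ satisfying (i) $\theta(J)=J$, (ii) $(\pi\otimes\omega_\psi)\circ\theta\cong(\pi\otimes\omega_\psi)^\vee$ as $J$-representations, and (iii) $\theta(g)\in JgJ$ for every $g\in \bG_2(k)$. Granted (i)--(iii), $\theta$ induces an algebra anti-automorphism of $\CH$ that fixes each $\Hom$-summand pointwise, forcing $\CH$ to be commutative.

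For (i)--(ii) the natural candidate is a Chevalley-type anti-involution $\theta$ of $\bG_2(k)$ (essentially ``transpose-inverse'' on the seven-dimensional representation), possibly composed with an inner automorphism in order to stabilize $J$. On the Levi-type factor $\SL_2(k)\subset J$ it should act as $g\mapsto w_0 g^{-1}w_0^{-1}$ for a suitable Weyl element $w_0$; since every irreducible representation of $\SL_2(k)$ is self-dual in odd characteristic (the non-trivial Weyl element inverts both the split and the non-split tori and identifies Weyl-conjugate pairs of characters), one automatically has $\pi\circ\theta\cong\pi^\vee$, regardless of the hypothesis on $\pi$. On the unipotent radical $V$, $\theta$ should be arranged so that the additive character $\psi$ on the center of the Heisenberg quotient $\sH$ is sent to $\psi^{-1}$, giving $\omega_\psi\circ\theta\cong\omega_{\psi^{-1}}\cong\omega_\psi^\vee$ by the standard theory of the Weil representation.

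The principal obstacle is condition (iii), the $\theta$-stability of each double coset. This requires an explicit understanding of $J\backslash\bG_2(k)/J$, which I would obtain by refining the Bruhat decomposition along $J$ using the root structure of $\bG_2$ attached to the Heisenberg parabolic $P=MV$. The hypothesis that $\pi$ is not fully induced from the Borel is likely to enter exactly here, or at a refined version of (ii) on certain stabilizers: for generic double cosets both $\theta$-stability and a direct $\leq 1$ bound on the corresponding summand should hold unconditionally, but for a small number of degenerate cosets whose unipotent stabilizer maps onto (a conjugate of) the Borel of $\SL_2(k)$, the corresponding $\Hom$-space can have dimension greater than one exactly when $\pi$ is a full principal series; restricting to the stated class of $\pi$ collapses these bad summands. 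Carrying out this double-coset bookkeeping and matching the restriction of $\omega_\psi$ to each unipotent stabilizer against the Jacquet module of $\pi$ along the Borel of $\SL_2(k)$ is where I would expect to spend most of the work.
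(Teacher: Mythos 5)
Your overall strategy (Mackey plus a Gelfand--Kazhdan anti-involution on the Hecke algebra $\CA(G,J,\sigma)$) is exactly the paper's, but several of the specific claims you lean on are false as stated, and the place where the hypothesis on $\pi$ actually enters is misidentified. First, it is not true that every irreducible representation of $\SL_2(k)$ is self-dual: for $q\equiv 3\pmod 4$ the characters of $\omega_\psi^{\pm}$ take non-real values, and $\widetilde{\omega_\psi^{\pm}}\cong\omega_{\psi_\kappa}^{\pm}\not\cong\omega_\psi^{\pm}$. The correct substitute is the MVW statement $\widetilde{\pi}\cong{}^{\iota}\pi$ with ${}^{\iota}g=\diag(-1,1)\,g\,\diag(-1,1)$ (Lemma \ref{lem2.5}), and the paper's anti-involution ${}^\tau g={}^{\iota}g^{-1}$ is built from this, together with explicit ${}^\iota$-equivariant pairings on each candidate $\pi$ and on $\omega_\psi$. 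Second, your condition (iii) fails for this pair $(G,J)$: since $h(a,1)$ normalizes $J$, the double cosets $Jh(a,1)J=h(a,1)J$ are pairwise distinct and ${}^\tau$ sends $Jh(a,1)J$ to $Jh(a^{-1},1)J$, a different coset when $a^2\neq 1$. The method survives only because those cosets support no nonzero element of $\CA(G,J,\sigma)$ at all (the relation $h(a,1)\bx_{2\alpha+\beta}(z)=\bx_{2\alpha+\beta}(az)h(a,1)$ forces $\psi(z)K=\psi(az)K$); one must prove vanishing there directly rather than invoke coset stability.

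Third, the hypothesis that $\pi$ is not a full principal series does not enter through Jacquet modules along unipotent stabilizers; it enters because $\pi\otimes\omega_\psi|_{\SL_2(k)}$ must be multiplicity free (Proposition \ref{prop2.1}, Corollary \ref{cor2.2}). This is what is needed at the cosets whose representatives normalize the $\SL_2(k)$ factor --- most critically $h(a,1)w_2$, where $K(h(a,1)w_2)\in\Hom_{\SL_2(k)}(\sigma_\pi,\sigma_\pi^a)$ with $\sigma_\pi^a$ the twist by $\diag(a,1)$; for $\pi=I(\chi)$ the summand $I(\epsilon\chi)$ occurs with multiplicity $2$ and the conclusion genuinely fails (Remark \ref{rmk6.2}). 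Finally, even on the $\theta$-stable cosets, showing that the induced transpose fixes each surviving $\Hom$-summand is not automatic: one must normalize the intertwiners $\lambda^a_V\in\Hom_{\SL_2(k)}(V,V^a)$ compatibly with the pairings and verify $\sigma_\pi(h(a^{-1},a))\circ\lambda^a_V\circ\lambda^a_V=\id_V$ (Lemma \ref{lem6.4}), a computation the paper carries out case by case and describes as delicate, together with the lengthy basis-by-basis analysis at $\eta(a)$ and $\xi(a)$. As written, your proposal leaves all of this --- which is the bulk of the actual proof --- unaddressed.
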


Note that, over finite fields, the metaplectic cover $\wt{\SL}_2(k)$ splits and the Weil representation can be defined over $\SL_2(k)$. Moreover, over finite fields, every finite dimensional representation is semi-simple, and thus the above theorem is equivalent to that for any irreducible representation $\Pi$ of $\bG_2(k)$, one has $\dim\Hom_J(\Pi,\pi\otimes\omega_\psi)\le 1$ if $\pi$ is an irreducible representation of $\SL_2(k)$ which is not fully induced from the Borel subgroup. However, if $\pi$ is indeed an irreducible representation which is fully induced from the Borel subgroup, then the representation $\Ind_J^{\bG_2(k)}(\pi\otimes\omega_\psi)$ may not be multiplicity free, see Remark \ref{rmk6.2}, which is quite different from the conjectural local fields case. On the other hand, if $\pi$ is fully induced representation of $\SL_2(k)$ and if $\Pi$ is an irreducible \textit{cuspidal} representation of $\RG_2(k)$, in \cite{LZ}, we prove that $\Hom_J(\Pi,\pi\otimes \omega_\psi)\le 1$, which is used to prove the existence of $\GL_1$-twisted gamma factors.

When we were working on the above result for the split exceptional group $\bG_2$, we realized that even for classical groups over finite fields, the uniqueness of Fourier-Jacobi models has not been settled in general. Thus we decided to include some results on the uniqueness of certain Fourier-Jacobi models for $\Sp_4$ and $\RU_4$ over finite fields, which are quite similar to the $\bG_2$ case:

\begin{thmB}[Theorem \ref{thm4.1} and Theorem \ref{thm: multiplicity one for U4}]
Let $k$ be a finite field with odd characteristic and let $E/k$ be a quadratic extension. Let $G=\Sp_4(k)$ $($respectively $\RU_4(k))$ and let $J$ be a subgroup of $G$ which is isomorphic to $\SL_2(k)\ltimes \sH$ $($respectively $\RU_2(k)\ltimes \sH_E$ with $\sH_E$ the Heisenberg group of the form $E^2\oplus k)$. Let $\pi$ be an irreducible representation of $\SL_2(k)$ $($respectively $\RU_2(k)$ $)$ which is not fully induced from the Borel subgroup. Then $\Ind_J^G(\pi\otimes\omega_\psi)$ is multiplicity free.
\end{thmB}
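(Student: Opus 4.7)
The plan is to prove both cases in parallel via a Gelfand--Kazhdan style argument, by showing that the Hecke algebra
\[
\CH_{\pi,\psi}(G,J) := \End_G\bigl(\Ind_J^G(\pi \otimes \omega_\psi)\bigr)
\]
is commutative; since representations of finite groups in characteristic zero are semisimple, this is equivalent to the multiplicity-freeness of $\Ind_J^G(\pi \otimes \omega_\psi)$. Recall that $\CH_{\pi,\psi}(G,J)$ is realized as the convolution algebra of functions $\phi : G \to \End_\BC(\pi \otimes \omega_\psi)$ satisfying the bi-equivariance condition $\phi(j_1 g j_2) = (\pi \otimes \omega_\psi)(j_1)\, \phi(g)\, (\pi \otimes \omega_\psi)(j_2)$ for $j_1,j_2\in J$, $g\in G$.

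The heart of the plan is to construct an anti-automorphism $\sigma$ of $G$ satisfying $\sigma(J)=J$ and such that the twisted representation $(\pi \otimes \omega_\psi) \circ \sigma$ is isomorphic to the contragredient $(\pi \otimes \omega_\psi)^\vee$ as a representation of $J$. The natural candidate for $\Sp_4(k)$ is $\sigma(g) = w_0 \cdot {}^t g^{-1} \cdot w_0^{-1}$ for a suitable representative $w_0$ of the longest Weyl element, and analogously for $\RU_4(k)$ using the Hermitian transpose. The second condition reduces, via the decomposition $J=\SL_2(k)\ltimes\sH$ (respectively $\RU_2(k)\ltimes\sH_E$), to two compatibilities: (a) $\pi\circ\sigma|_{\SL_2(k)}$ (respectively $\pi\circ\sigma|_{\RU_2(k)}$) is isomorphic to $\pi^\vee$; and (b) the Weil representation $\omega_\psi$ transforms correctly, which is essentially built into its construction. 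The next step is to enumerate $J\backslash G/J$ via the Bruhat decomposition of $G$ and verify that $\sigma(JgJ)=JgJ$ for every double coset representative $g$. Once this is established, the standard Gelfand trick produces an anti-involution of $\CH_{\pi,\psi}(G,J)$ which acts as the identity, thereby forcing commutativity.

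The hypothesis that $\pi$ is not fully induced from the Borel intervenes precisely at compatibility (a). For an irreducible principal series $\pi = \Ind_B^{\SL_2(k)}(\chi)$ with $\chi^2\ne 1$, one has $\pi^\vee\cong \Ind_B^{\SL_2(k)}(\chi^{-1})$, and the required isomorphism with $\pi\circ\sigma$ need not hold in a way compatible with the chosen $\sigma$; whereas for cuspidal, Steinberg, trivial, and reducible-principal-series-summand representations, which exhaust the ``not fully induced'' list, self-duality up to an absorbable twist is clean. This is consistent with the obstruction recorded in Remark \ref{rmk6.2}. The main obstacle in executing the plan will be the detailed double-coset analysis: while the Bruhat strata of $\Sp_4(k)$ and $\RU_4(k)$ can be listed explicitly, checking $\sigma$-stability on the ``long cells'' and tracking how $\omega_\psi$ transforms there requires delicate computation, and in the $\RU_4$ case one must additionally track the $\Gal(E/k)$-action throughout.
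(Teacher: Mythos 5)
Your overall framework (Gelfand--Kazhdan via an anti-involution preserving $J$ and realizing the contragredient of $\pi\otimes\omega_\psi$) is the same as the paper's, but there is a decisive logical gap in the step ``once $\sigma(JgJ)=JgJ$ for every double coset, the standard Gelfand trick forces commutativity.'' That implication is valid only when the inducing representation is one-dimensional. Here $\sigma_\pi=\pi\otimes\omega_\psi$ is a higher-dimensional representation of $J$, so an element $K$ of the Hecke algebra assigns to each $g$ an \emph{operator} $K(g)\in\End_\BC(\sigma_\pi)$, and even when ${}^\tau g=g$ one only gets ${}^\tau K(g)={}^t(K(g))$; one must still prove ${}^t(K(g))=K(g)$ for each double-coset representative (this is the content of Corollary \ref{cor3.4}). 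The decomposition $J\backslash G/J$ and its $\tau$-stability do not depend on $\pi$ at all, so if your argument were complete it would equally prove multiplicity-freeness for $\pi=I(\chi)$ with $\chi^2\ne 1$ --- which is false by Remark \ref{rmk4.2}. The missing work is precisely the bulk of the paper's proof of Theorem \ref{thm4.1}: for each representative $t(a)$, $\eta(a)$, $\xi(a)$ one uses the bi-equivariance relations under the Heisenberg part and $\bx_\beta$ to pin $K(g)$ down to an essentially diagonal operator in an explicit basis, and then the skew-symmetry ${}^\tau K=-K$ together with explicitly constructed $\iota$-invariant pairings on $\St$, $\omega_\psi^\pm$, $\omega_{\psi,\mu}$ kills it.

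You have also misplaced where the hypothesis on $\pi$ enters. It is not at your compatibility (a): by Lemma \ref{lem2.5} the MVW involution satisfies $\tilde\pi\cong{}^\iota\pi$ for \emph{every} irreducible representation of $\SL_2(k)$, including $I(\chi)$, so self-duality up to the twist is never the obstruction. The hypothesis is used because $\pi\otimes\omega_\psi|_{\SL_2(k)}$ is multiplicity free exactly when $\pi$ is not a full principal series (Proposition \ref{prop2.1}); at the double cosets where $K(g)$ commutes with the $\SL_2(k)$-action, this multiplicity-freeness lets one write $K(g)=\sum_i C_i\,\id_{V_i}$ with each idempotent $\id_{V_i}$ fixed by the transpose (Lemma \ref{lem3.9}), whence $K(g)=0$. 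When $\pi=I(\chi)$ the summand $I(\epsilon\chi)$ occurs with multiplicity two and this step, and indeed the theorem itself, fails.
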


As in the $\bG_2$ case, if $\pi$ is an irreducible representation which is fully induced from the Borel subgroup, then $\Ind_J^G(\pi\otimes\omega_\psi)$ may not be multiplicity free in general (see Remark \ref{rmk4.2}). On the other hand, since unitary groups
are inner forms of general linear groups, one might expect that similar results also hold for $\GL_4(k)$. But it turns out that even the Weil representation itself of $\GL_2(k)$ is not multiplicity free (see \cite[Proposition 4.2]{Ge}), thus we could not expect similar results for general linear groups. This also shows how tricky things can be over finite fields. 

We remark that, when $\pi$ is a fully induced representation of $\SL_2(k)$ (resp. $\RU_2(k)$) from the Borel subgroup, the fact that $\Ind_J^G(\pi\otimes\omega_\psi)$, where $G$ can be $\bG_2(k),\Sp_4(k)$ (resp. $\RU_4(k)$), may not be multiplicity free comes from the fact that $\pi\otimes\omega_\psi|_{\SL_2(k)}$ (resp. $\pi\otimes\omega_\psi|_{\RU_2(k)}$) is not multiplicity free. This is quite different from the local fields case and makes the uniqueness problem of Fourier-Jacobi models over finite fields more complicated and thus more interesting. However, in this case, we still expect irreducible cuspidal representations of $G$ occur with multiplicity one in $\Ind_J^G(\pi\otimes\omega_\psi)$ (see our subsequent work \cite{LZ}). 

Theoretically speaking, all of the above multiplicity one results could be checked using the known character tables for these groups, see \cite{Sr} for the character table of $\Sp_4(\mathbb{F}_q)$ when $q$ is odd, and \cite{CR,En,EY} for the character table of $\bG_2$. However, although the groups in our consideration are relatively small, their character tables are already too complicated to be used to give a proof of the above multiplicity one results practically. Alternatively, we use a variant of Gelfand-Kazhdan method (see Section \ref{sec3.2}) to prove the above results. In \cite{T}, using this approach, Teicher proved certain multiplicity one results for $\mathrm{GSp}_{2n}$ and $\mathrm{O}_n$ over finite fields. 

In general, let $k$ be a finite field with odd characteristic and $E/k$ be a quadratic extension,  $\bG_n(k)=\Sp_{2n}(k)$ or $\RU_{2n}(k)$, and let $\omega_\psi$ be the Weil representation of $\bG_n(k)$. A natural question is then for what kind of irreducible representations $\pi$ of $\bG_n(k)$, the tensor product $\pi\otimes\omega_\psi$ is still multiplicity free?  If $\pi$ is the trivial representation or a character, this is known to be true. If $\pi$ is the Steinberg representation, it is proved by Hiss and Zalesski (\cite{HZ}) that $\pi\otimes\omega_\psi$ is multiplicity free. It seems that little is known for general $\pi$. According to the decomposition of $\pi\otimes\omega_\psi|_{\SL_2(k)}$ given in Section \ref{sec2.5}, we guess that if $\pi$ is an irreducible cuspidal representation of $\bG_{n}(k)$, then $\pi\otimes\omega_\psi$ should be multiplicity free. This uniqueness of general Fourier-Jacobi models towards  analogues of the local Gan-Gross-Prasad conjectures over finite fields (using Deligne-Lusztig character theory)  is currently our work in progress.

These multiplicity one results have potential applications in establishing certain functional equations over finite fields and proving the existence of certain twisted gamma factors as done by Roditty in \cite{Ro} for $\GL_n$ over finite fields. In \cite{Ni}, Nien proved a local converse theorem for $\GL_n(k)$ using the local gamma factors in \cite{Ro}. We expect to construct the gamma factors (using an analogue of Rankin-Selberg method) and prove the converse theorems for other groups over finite fields in the future. 

Finally, we remark that, although few results on the multiplicities of Fourier-Jacobi models over finite fields have been obtained in the literature, 
analogue problems for Bessel models have been vastly studied, see \cite{GGP2,GP,Ha,R,Th,Za} for example. 


The paper is organized as follows. In Section 2, we give a review of representations of $\SL_2(k)$ over finite fields. In Section 3 and 4, we prove Theorem B
for $\Sp_4(k)$. The $\RU_4(k)$ case is included in Section 5. We then introduce the Fourier-Jacobi models for $\bG_2(k)$ in Section 6, and prove Theorem A in Section 7. 

\section*{Acknowledgement}
The first-named author is partially supported by NSF grant DMS-1702218 and start-up funds from the Department of Mathematics at Purdue University. The second-named author was partially supported by Guangdong Natural Science Foundation 2018A030310081, NSFC grant 11801577 and a PIMS posdtoc fellowship. The authors would like to thank Professor Meinolf Geck for useful discussion and pointing out the reference \cite{HZ}. Part of the work was finished when the second-named author was a research associate in Sun Yat-Sen University, Guangzhou, China, visited Bin Xu in Sichuan University and attended the workshop ``Introductory Workshop: Group Representation Theory and Applications" at MSRI, where he had a chance to talk to Professor Meinolf Geck, he would like to thank the support from these institutes.

\section{Review of representations of SL(2) over finite fields}
In this section, we collect some well-known facts on representations of $\SL_2$ over finite fields with odd characteristic.

Throughout the paper, unless otherwise specified, we let $q=p^r$ with an odd prime $p$, and let $k=\BF_q$, the unique (up to isomorphism) finite field with $q$ elements. Let $\psi$ be a fixed non-trivial additive character of $k$.  Write $\epsilon_0=\left( \frac{-1}{q}\right)$ and $\epsilon(x)=\left(\frac{x}{q} \right)$ for $x\in k^\times$, where $\left(\frac{\cdot}{q} \right)$ is the Legendre symbol. Note that $\epsilon$ is the unique non-trivial quadratic character of $k^\times$ and the kernel of $\epsilon$ is $k^{\times,2}:=\wpair{x^2:x\in k^\times}$. We fix a generator $\kappa$ of the cyclic group $k^\times$. Then a set of representatives of $k^\times/k^{\times,2}$ can be taken as $\wpair{1,\kappa}$. 

 Let $E$ be the unique (up to isomorphism) quadratic extension of $k$. Let $\Fr: E\ra E$ be the Frobenius map defined by $\Fr(x)=x^q$. Then $\Fr$ is the unique non-trivial element in the Galois group $\Gal(E/k)$. Let $\Nm:E\ra k$ be the norm map and let $\Tr:E\ra k$ be the trace map. Note that $\Nm(x)=x\cdot \Fr(x)=x^{q+1}$ and $\Tr(x)=x+x^q.$  We can realize $E$ as $k[\sqrt{\kappa}]$. Under this realization, we have $ \Fr(x+y\sqrt{\kappa})=x-y\sqrt{\kappa},$ $ \Nm(x+y\sqrt{\kappa})=x^2-y^2\kappa,$ and
     $\Tr(x+y\sqrt{\kappa})=2x,$ for $x,y\in k.$ Let $E^1=\wpair{x\in E^\times: \Nm(x)=1}$. Then the norm map $\Nm:E^\times \ra k^\times$ induces an exact sequence 
$1\ra E^1\ra E^\times\ra k^\times\ra 1.$

\subsection{Conjugacy classes and induced representations of SL(2)}\label{sec2.1}

Table \ref{table: conjugacy class of SL(2)} gives the conjugacy classes of $\SL_2(k)$ (see \cite[$\S$5]{FH}).

\begin{table}
\begin{align*}
\begin{array}{|c|c|c|}
\hline
\textrm{Representative} & \textrm{Number of elements in class } & \textrm{ Number of classes }   \\
\hline
\begin{pmatrix}1 &\\ &1 \end{pmatrix}  &  1&1   \\
\hline
\begin{pmatrix}-1 &\\ &-1 \end{pmatrix}  & 1 &1  \\
\hline
 \begin{pmatrix}1 &1\\ &1\end{pmatrix}&(q^2-1)/2&1\\
 \hline
 \begin{pmatrix}1 &\kappa\\ &1\end{pmatrix}&(q^2-1)/2&1\\
 \hline
\begin{pmatrix}-1 &1\\ &-1\end{pmatrix}&(q^2-1)/2&1\\
\hline
 \begin{pmatrix}-1 &\kappa\\ &-1\end{pmatrix}&(q^2-1)/2&1\\
 \hline
 \begin{pmatrix}x & \\ &x^{-1} \end{pmatrix}, x\ne \pm1 &q(q+1) & (q-3)/2\\
 \hline
\begin{pmatrix}x&y\\ \kappa y&x \end{pmatrix},x\ne \pm1, y \ne 0 &q(q-1) &(q-1)/2\\
\hline
\end{array}
\end{align*}
\caption{Conjugacy class of $\SL_2(k)$}
\label{table: conjugacy class of SL(2)}
\end{table}

In the second to the last row in Table \ref{table: conjugacy class of SL(2)}, $\begin{pmatrix}x & \\ &x^{-1} \end{pmatrix}$ and $\begin{pmatrix}x^{-1} & \\ &x \end{pmatrix}$ are in the same class. 
In the last row, $\begin{pmatrix}x&y\\ \kappa y&x \end{pmatrix}$ and $\begin{pmatrix}x&-y\\ -\kappa y&x \end{pmatrix}$ are in the same class, and  $\begin{pmatrix}x&y\\ \kappa y&x \end{pmatrix}$ can be mapped to $x+y\sqrt{\kappa}\ne \pm 1$ in $E^1$. Thus these representatives can be indexed by $(E^1-\wpair{\pm 1})/\pair{\mathrm{Fr}}.$ For  $\begin{pmatrix}x&y\\ \kappa y&x \end{pmatrix}\in \SL_2(k)$, we write $\xi_{x,y}=x+y\sqrt{\kappa}$.

The simplest class of irreducible representations of $\SL_2(k)$ is that induced from characters of the Borel subgroup. Let $B_{\SL_2}=A_{\SL_2}N_{\SL_2}$ be the Borel subgroup of $\SL_2(k)$ consisting of upper triangular matrices, with diagonal torus $A_{\SL_2}\cong k^\times$ and upper triangular unipotent subgroup $N_{\SL_2}$. Let $\chi$ be a character of $k^\times\cong A_{\SL_2}$. We can view $\chi$ as a character of $B_{\SL_2}$ such that its action on $N_{\SL_2}$ is trivial. Let $I(\chi)$ be the induced representation $\Ind_{B_{\SL_2}}^{\SL_2(k)}(\chi)$, then 
$\dim I(\chi)=q+1$. If $\chi^2\ne 1$, then it is well-known that $I(\chi)$ is irreducible (see \cite[$\S$2.5]{Pr} for example),  $I(\chi)\cong I(\chi^{-1})$, and there are totally $\frac{q-3}{2}$ of them. If $\chi=1$, the trivial character, then $I(1)=1\oplus \St$, where $1$ denotes the trivial representation of $\SL_2(k)$ and $\St$ denotes the Steinberg representation. Hence, $\dim \St=q$.
If $\chi = \epsilon$, then $I(\epsilon)$ is a direct sum of two non-equivalent irreducible representations, each of which is a Weil representation (cf. the next subsection), and has dimension $\frac{q+1}{2}$, 

\subsection{Weil representations}\label{sec2.2}
Since $N_{\SL_2}\cong k$, we could view $\psi$ as a character of $N_{\SL_2}$. Up to conjugation by $A_{\SL_2}$, there are two non-trivial additive characters of $N_{\SL_2}$, which are $\psi$ and $\psi_\kappa$, where $\psi_\kappa$ is the character given by $\psi_\kappa(x)=\psi(\kappa x)$.

Let $W=k^2$, endowed with the symplectic structure $\pair{~,~}$ defined by
$$\pair{(x_1,y_1),(x_2,y_2)}=2x_1y_2-2x_2y_1. $$
Let $\sH$ be the Heisenberg group associated with $W$. Explicitly, $\sH=W\oplus k$ with addition 
$$[x_1,y_1,z_1]+[x_2,y_2,z_2]=[x_1+x_2,y_1+y_2,z_1+z_2+x_1y_2-x_2y_1].$$
Let $\SL_2(k)$ act on $\sH$ such that it acts on $W$ by right multiplication and acts on the third component $k$ in $\sH$ trivially. Then we can form the semi-direct product $\SL_2(k)\ltimes \sH.$ It is well-known that there is a Weil representation $\omega_\psi$ of $\SL_2(k)\ltimes \sH$ on $\CS(k)$, where $\CS(k)$ is the space of $\BC$-valued functions on $k$. This representation is determined by the following formulas 
\begin{align}
\begin{split}\label{eq2.1}
\omega_\psi([x,0,z])\phi(\xi)&=\psi(z)\phi(\xi+x),\\
\omega_\psi([0,y,0])\phi(\xi)&=\psi(2\xi y)\phi(\xi),\\
\omega_\psi(\diag(a,a^{-1}))\phi(\xi)&=\epsilon(a)\phi(a\xi),\\
\omega_\psi\left(\begin{pmatrix}1&b\\ &1 \end{pmatrix} \right)\phi(\xi)&=\psi(b\xi^2)\phi(\xi),\\
\omega_\psi\left(\begin{pmatrix}&b\\ -b^{-1}& \end{pmatrix} \right)\phi(\xi)&=\frac{1}{\gamma(b,\psi)}\sum_{x\in k}\psi(2bx\xi)\phi(x),
\end{split}
\end{align}
where $\gamma(b,\psi)=\sum_{x\in k}\psi(-bx^2)$. The construction of Weil representations for more general groups over finite fields was given in \cite{Ge} after the seminal work of Weil \cite{W}. The above formulas also could be found in \cite[p. 220]{GH}. 

The representation $\omega_\psi|_{\SL_2(k)}$ is reducible.  In fact, let
$$\CS^\pm(k)=\wpair{\phi\in \CS(k): \phi(-\xi)=\pm\phi(\xi),\forall \xi\in k}.$$ 
Then $\CS^+(k)$ and $\CS^-(k)$ are invariant under the action of $\SL_2(k)$. Denote the corresponding representations by $\omega_\psi^+$ and $\omega_\psi^-$ respectively. Then $\omega_\psi^+$ and $\omega_\psi^-$ are irreducible, $\dim \omega_\psi^+=\frac{q+1}{2}$ and $\dim \omega_{\psi}^-=\frac{q-1}{2}$. 
Similarly, from the character $\psi_{\kappa}$, one can construct the Weil representation $\omega_{\psi_\kappa}=\omega_{\psi_\kappa}^+ \oplus \omega_{\psi_\kappa}^-$.
Moreover, we have 
$I(\epsilon)=\omega_\psi^+\oplus \omega_{\psi_\kappa}^+.$
One can also check that $\omega_\psi^-$ and $\omega_{\psi_{\kappa}}^-$ are cuspidal, in the sense that they are not subrepresentations of $I(\chi)$ for any character $\chi$ of $k^\times$. 

\subsection{Cuspidal representations}\label{cuspidal representations}
We already have two cuspidal representations $\omega_{\psi}^-,\omega_{\psi_\kappa}^-$. The remaining cuspidal representations are also constructed from Weil representations.

Let $\mu$ be a non-trivial character of $E^1$. Let 
$$\CW(\mu)=\wpair{f:E \ra \BC: f(yx)=\mu^{-1}(y)f(x),\forall x\in E, y\in E^1}.$$
For $f\in \CW(\mu)$, we have $f(0)=\mu^{-1}(y)f(0)$, for all $y\in E^1$. Since $\mu$ is non-trivial, we get $f(0)=0$. For any $a\in k^\times$, let $x_a\in E^\times$ be such that $\Nm(x_a)=a$. Note that a function $f\in \CW(\mu)$ is uniquely determined by its values on the set $\wpair{x_a:a\in k^\times}$. Thus $\dim \CW(\mu)=q-1$.

There is a representation $\omega_{\psi,\mu}$ of $\SL_2(k)$ on $\CW(\mu)$ such that 
\begin{align}
\begin{split}\label{eq2.2}
\omega_{\psi,\mu}\left( \begin{pmatrix}a&\\ &a^{-1} \end{pmatrix}\right)f(\xi)&=f(a\xi),\\
\omega_{\psi,\mu}\left( \begin{pmatrix}1&b\\ &1 \end{pmatrix}\right)f(\xi)&=\psi(\Nm(\xi)b)f(\xi),\\
\omega_{\psi,\mu}\left( \begin{pmatrix}&1\\ -1& \end{pmatrix}\right)f(\xi)&=-q^{-1}\sum_{y\in E}\psi(\Tr(\Fr( y) \xi))f(y).
\end{split}
\end{align}
See \cite[$\S$4.1]{Bu}, or \cite[Chapter 3]{Pr}.

The representation $\omega_{\psi,\mu}$ is cuspidal. If $\mu^2\ne 1$, then $\omega_{\psi,\mu}$ is irreducible. Furthermore, $\omega_{\psi,\mu}$ is isomorphic to $\omega_{\psi,\mu'}$ if and only if $\mu'=\mu^{\pm 1}$.
One can also check that $\omega_{\psi,\mu}\cong \omega_{\psi_\kappa,\mu}$, and thus the representation $\omega_{\psi,\mu}$ is independent of the choice of $\psi$.
We then get $\frac{q-1}{2}$ equivalence classes in the family $\wpair{\omega_{\psi,\mu},\mu\in \widehat E^1,\mu^2\ne 1}$. 
Let $\mu_0$ be the unique non-trivial quadratic character of $E^1$, then one can check that $\omega_{\psi,\mu_0}=\omega_\psi^-\oplus \omega_{\psi_\kappa}^-$.

So far, we get a list of irreducible representations
$$1, \St, I(\chi)~~ ( \chi^2\ne 1), \omega_\psi^{\pm},\omega_{\psi_\kappa}^{\pm}, \omega_{\psi,\mu}~~(\mu^2\ne 1),$$
and the only non-trivial relations among them are $I(\chi)\cong I(\chi^{-1})$ and $ \omega_{\psi,\mu}\cong \omega_{\psi,\mu^{-1}}$. One can easily see that this is a complete list of irreducible representations of $\SL_2(k)$ by checking that the cardinality of this list is exactly the same as the number of conjugacy classes of $\SL_2(k)$. 

\subsection{Character table}\label{sec2.4}

For a finite dimensional representation $\rho$ of a finite group $G$, we denote by $\ch_\rho$ the character function of $G$. Recall that $\ch_\rho(g)=\Tr_\rho(g)=\sum_{i}r_{ii}$, if $\rho(g)$ is identified with a matrix $(r_{ij})$. The function $\ch_\rho$ is a class function, i.e., it is constant on a conjugacy class. Table \ref{table: character table of SL(2)} gives the character table of $\SL_2(k)$, which is taken from \cite[$\S$5]{FH}:

\begin{table}
\begin{align*}
\begin{array}{|c|c|c|c|c|}
\hline
\ch& \begin{pmatrix}x &\\ &x \end{pmatrix},x=\pm1 & \begin{pmatrix}x& y \\&x \end{pmatrix},
\begin{matrix}
 x=\pm 1\\
 y=1,\kappa 
\end{matrix}
& \begin{pmatrix}x&\\ &x^{-1} \end{pmatrix}, x\ne \pm1& \begin{pmatrix} x&  y \\ \kappa y &x\end{pmatrix},
\begin{matrix}
 x\ne \pm1\\
 y\ne 0
\end{matrix}
\\
\hline
1 &1&1&1&1\\
\hline
{\St}&q&0&1&-1\\
\hline
{I(\chi)}& (q+1)\chi(x)& \chi(x)&\chi(x)+\chi(x^{-1})& 0\\
\hline
{\omega_{\psi,\mu}},\mu^2\ne 1& (q-1)\mu(x)&-\mu(x)&0&-(\mu+\mu^q)(\xi_{x,y}), \\
\hline
\omega_{\psi}^+,\omega_{\psi_\kappa}^+&&&\epsilon(x)&0\\
\hline
\omega_{\psi}^-,\omega_{\psi_\kappa}^-&&&0&-\mu_0(\xi_{x,y})\\
\hline
\end{array}
\end{align*}
\caption{Character table of $\SL_2(k)$}
\label{table: character table of SL(2)}
\end{table}
The missing part of Table \ref{table: character table of SL(2)} is given in Table \ref{table: missing part}.
\begin{table}
\begin{align*}
\begin{array}{|c|c|c|c|c|c|c|}
\hline
&\begin{pmatrix}1&\\&1 \end{pmatrix} & \begin{pmatrix}-1&\\ &-1 \end{pmatrix} &\begin{pmatrix}1&1\\ &1 \end{pmatrix}&\begin{pmatrix}1&\kappa\\ &1 \end{pmatrix}&\begin{pmatrix}-1&1\\ &-1 \end{pmatrix} & \begin{pmatrix}-1&\kappa\\ &-1 \end{pmatrix}\\
\hline
\omega_\psi^+ &\frac{q+1}{2}&\frac{q+1}{2}\epsilon_0 &s&t&s'&t'\\
\hline
\omega_{\psi_\kappa}^+&\frac{q+1}{2}&\frac{q+1}{2}\epsilon_0&t&s&t'&s'\\
\hline
\omega_\psi^-&\frac{q-1}{2}&-\frac{q-1}{2}\epsilon_0&u&v&u'&v'\\
\hline
\omega_{\psi_\kappa}^-&\frac{q-1}{2}&-\frac{q-1}{2}\epsilon_0 &v&u&v'&u'\\
\hline
\end{array}
\end{align*}
\caption{Missing part of Table \ref{table: character table of SL(2)}}
\label{table: missing part}
\end{table}
The parameters $s,t,u,v,s',t',u',v'$ in Table \ref{table: missing part} are given below:
\begin{align*}
s=\frac{1}{2}+\frac{1}{2}\sqrt{ \epsilon_0 q}, &\quad t=\frac{1}{2}-\frac{1}{2}\sqrt{\epsilon_0 q},\\
u=-\frac{1}{2}+\frac{1}{2}\sqrt{\epsilon_0 q}, &\quad v=-\frac{1}{2}-\frac{1}{2}\sqrt{\epsilon_0 q},\\
s'=\epsilon_0 s,
t'=\epsilon_0 t, &\quad  u'=t'=\epsilon_0 t, v'=s'=\epsilon_0 s.
\end{align*}

\subsection{The tensor product \texorpdfstring{$\pi\otimes\omega_{\psi}$}{Lg}} \label{sec2.5}

Given an irreducible representation $\pi$ of $\SL_2(k)$, we want to determine the decomposition of $\pi\otimes \omega_{\psi}$. We first notice that $\ch_{\omega_\psi}=\ch_{\omega_{\psi}^+}+\ch_{\omega_\psi^-}$. Thus we have 
\begin{align*}
\begin{array}{|c|c|c|c|c|c|c|c|c|}
\hline
\ch &\begin{pmatrix}1 &\\ &1\end{pmatrix} &\begin{pmatrix}-1 &\\ &-1\end{pmatrix}&\begin{pmatrix}1 &1\\ &1\end{pmatrix}&\begin{pmatrix}1 &\kappa\\ &1\end{pmatrix}&\begin{pmatrix}-1 &1\\ &-1\end{pmatrix}&\begin{pmatrix}-1 &\kappa\\ &-1\end{pmatrix}&\begin{pmatrix}x &\\ &x^{-1}\end{pmatrix}&\begin{pmatrix}x &y\\y\kappa &x\end{pmatrix}\\
\hline
\omega_\psi&q&\epsilon_0 &\sqrt{\epsilon_0 q}&-\sqrt{\epsilon_0 q}&\epsilon_0&\epsilon_0&\epsilon(x)&-\mu_0(\xi_{x,y} )\\
\hline
\end{array}
\end{align*}

Let $\pi$ be an irreducible representation of $\SL_2(k)$, we are going to compute $\ch_{\pi\otimes \omega_\psi}$. It is well-known that $\ch_{\pi\otimes \omega_\psi}(g)=\ch_\pi(g)\ch_{\omega_\psi}(g)$.  Hence, we have the following character table


\begin{align*}
\begin{array}{|c|c|c|c|c|c|c|c|}
\hline
\ch &\St\otimes\omega_\psi &I(\chi)\otimes\omega_\psi  &\omega_{\psi,\mu}\otimes\omega_\psi
&\omega_\psi^+\otimes\omega_\psi& \omega_{\psi_\kappa}^+\otimes \omega_\psi & \omega_\psi^-\otimes \omega_\psi&\omega_{\psi_\kappa}^-\otimes \omega_\psi\\
\hline
\begin{pmatrix}1&\\ &1 \end{pmatrix}&q^2 &q(q+1) &q(q-1) &\frac{q(q+1)}{2}&\frac{q(q+1)}{2}&\frac{q(q-1)}{2}&\frac{q(q-1)}{2}\\
\hline
\begin{pmatrix}-1&\\ &-1 \end{pmatrix}&\epsilon_0q&(q+1)\chi(-1)\epsilon_0& (q-1)\mu(-1)\epsilon_0& \frac{q+1}{2}&\frac{q+1}{2}&\frac{-q+1}{2}&\frac{-q+1}{2}\\
\hline
\begin{pmatrix}1&1\\ &1 \end{pmatrix}&0&\sqrt{\epsilon_0q}&-\sqrt{\epsilon_0q}
&s\sqrt{\epsilon_0 q}&t\sqrt{\epsilon_0 q}&u\sqrt{\epsilon_0 q}&v\sqrt{\epsilon_0 q}\\
\hline
\begin{pmatrix}1&\kappa\\ &1 \end{pmatrix}& 0&-\sqrt{\epsilon_0 q}&\sqrt{\epsilon_0q}
&-t\sqrt{\epsilon_0 q}&-s\sqrt{\epsilon_0 q}&-v\sqrt{\epsilon_0 q}&-u\sqrt{\epsilon_0 q}\\
\hline
\begin{pmatrix}-1&1\\ &-1 \end{pmatrix}&0&\epsilon_0\chi(-1)&-\epsilon_0\mu(-1)
&s&t&t&s\\
\hline
\begin{pmatrix}-1&\kappa\\ &-1 \end{pmatrix}&0 &\epsilon_0\chi(-1)&-\epsilon_0\mu(-1)
&t&s&s&t\\
\hline
\begin{pmatrix}x&\\ &x^{-1} \end{pmatrix}&\epsilon(x) &\epsilon\chi(x)+\epsilon\chi(x^{-1})&0
&1&1&0&0\\
\hline
\begin{pmatrix}x&y\\y\kappa  &x \end{pmatrix} &\mu_0(\xi_{x,y})&0 &(\mu+\mu^q)\mu_0(\xi_{x,y})
&0&0&1&1\\
\hline
\end{array}
\end{align*}
where in the last two rows $x\ne \pm 1$, and in the last row $y\ne 0$ as usual.

Let $A\subset \widehat {k^\times}$ be a set of representatives of $(\widehat {k^\times} -\wpair{1,\epsilon})/\pair{\chi=\chi^{-1}}$, where $1$ is the trivial character. In other words, for each $\chi\in \widehat {k^\times}-\wpair{ 1,\epsilon}$, there is one and only one of $\chi,\chi^{-1}$ is in $A$. Thus $|A|=\frac{q-3}{2}$. Let $B$ be a set of representatives of $(\widehat {E^1}-\wpair{1,\mu_0})/\pair{\mu=\mu^{-1}}$. Then $|B|=\frac{q-1}{2}$.

\begin{prop}\label{prop2.1}
 Let $\chi_1$ $($resp. $\mu_1)$ be a character of $k^\times$ $($resp. $E^1)$ such that $\chi_1^2\ne 1$ $($resp. $\mu_1^2\ne 1)$.
We have the following decomposition of representations of $\SL_2(k)$:
\begin{align*}\St\otimes \omega_\psi&=\St\bigoplus\left(\bigoplus_{\chi\in A}I(\chi)\right)\bigoplus\left(\bigoplus_{\mu\in B}\omega_{\psi,\mu} \right)\bigoplus \omega_{\psi}^+ \bigoplus \omega_{\psi_\kappa}^+,\\
I(\chi_1)\otimes \omega_{\psi}&=\St\bigoplus 2I(\epsilon \chi_1)\bigoplus \left(\bigoplus_{\chi\in A,\chi\ne \epsilon \chi_1^{\pm}}I(\chi)\right)\bigoplus \left(\bigoplus_{\mu\in B}\omega_{\psi,\mu} \right)\\
&\qquad  \bigoplus \frac{1+\epsilon_0}{2}(\omega_\psi^+\oplus \omega_\psi^-) \bigoplus \frac{1-\epsilon_0}{2}(\omega_{\psi_\kappa}^+\oplus \omega_{\psi_\kappa}^-),\\
\omega_{\psi,\mu_1}\otimes \omega_\psi&=\St\bigoplus  \left(\bigoplus_{\chi\in A}I(\chi)\right)\bigoplus  \left(\bigoplus_{\mu\in B,\mu\ne \mu_0\mu_1^{\pm1}}\omega_{\psi,\mu} \right)\\
&\qquad \bigoplus \frac{1-\epsilon_0}{2}(\omega_\psi^+\oplus \omega_\psi^-) \bigoplus \frac{1+\epsilon_0}{2}(\omega_{\psi_\kappa}^+\oplus \omega_{\psi_\kappa}^-),\\
\omega_\psi^+\otimes \omega_\psi&= \St\bigoplus   \frac{1+\epsilon_0}{2}\left( 1\oplus \bigoplus_{\chi\in A}I(\chi)\bigoplus \omega_\psi^+ \right)\bigoplus \frac{1-\epsilon_0}{2}\left(\bigoplus_{\mu\in B}\omega_{\psi,\mu}\bigoplus\omega_{\psi_\kappa}^- \right),\\
\omega_{\psi}^-\otimes\omega_\psi&=\frac{1+\epsilon_0}{2}\left(1\oplus \bigoplus_{\chi\in A}I(\chi)\bigoplus \omega_{\psi_\kappa}^+\right)\bigoplus \frac{1-\epsilon_0}{2}\left(\bigoplus_{\mu\in B}\omega_{\psi,\mu}\bigoplus \omega_\psi^- \right),\\
\omega_{\psi_\kappa}^+\otimes \omega_\psi&=\St\bigoplus \frac{1-\epsilon_0}{2}\left(1\oplus \bigoplus_{\chi\in A}I(\chi)\bigoplus \omega_{\psi_\kappa}^+ \right)\bigoplus \frac{1+\epsilon_0}{2}\left(\bigoplus_{\mu\in B}\omega_{\psi,\mu}\bigoplus\omega_{\psi}^-\right),\\
\omega_{\psi_\kappa}^-\otimes\omega_\psi&=\frac{1-\epsilon_0}{2}\left(1\oplus \bigoplus_{\chi\in A}I(\chi)\bigoplus \omega_{\psi}^+\right)\bigoplus \frac{1+\epsilon_0}{2}\left(\bigoplus_{\mu\in B}\omega_{\psi,\mu}\bigoplus \omega_{\psi_\kappa}^- \right).
\end{align*}
\end{prop}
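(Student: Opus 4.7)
The plan is a direct computation via character inner products. Every representation on both sides is semisimple, every irreducible of $\SL_2(k)$ is listed at the end of Section \ref{cuspidal representations}, and we already have the character table of $\SL_2(k)$ in Tables \ref{table: character table of SL(2)}--\ref{table: missing part} together with the character of $\omega_\psi$ and, more importantly, the character of $\pi\otimes\omega_\psi$ for each of the seven $\pi$ appearing in the statement. So for each pair $(\pi,\rho)$ the multiplicity of $\rho$ in $\pi\otimes\omega_\psi$ is simply
$$\langle \ch_{\pi\otimes\omega_\psi},\ch_\rho\rangle = \frac{1}{|\SL_2(k)|}\sum_{[g]}|C(g)|\,\ch_{\pi\otimes\omega_\psi}(g)\,\overline{\ch_\rho(g)},$$
where the sum runs over the eight rows of Table \ref{table: conjugacy class of SL(2)} and $|C(g)|$ is the class size read off from that same table. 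With $|\SL_2(k)|=q(q^2-1)$, the proposition reduces to evaluating $7\times 8$ such inner products.

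The two rows contributing the bulk of the structure are the split regular semisimple and non-split regular semisimple rows, weighted respectively by $q(q+1)$ and $q(q-1)$. On these rows $\ch_{\omega_\psi}$ reduces to $\epsilon(x)$ and $-\mu_0(\xi_{x,y})$ respectively, while $\ch_{I(\chi)}$ and $\ch_{\omega_{\psi,\mu}}$ reduce to sums of $\chi$ and of $\mu$ restricted to $k^\times-\{\pm 1\}$ and $E^1-\{\pm 1\}$. Character orthogonality on $k^\times$ and on $E^1$ is then what singles out the principal series $I(\chi)$ and cuspidal $\omega_{\psi,\mu}$ occurring in each decomposition; the shifts $\epsilon\chi_1^{\pm 1}$ in line two and $\mu_0\mu_1^{\pm 1}$ in line three arise exactly from the extra factor $\epsilon$ or $\mu_0$ absorbed when multiplying with $\ch_{\omega_\psi}$ before applying orthogonality. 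The ``boundary'' values at $x=\pm 1$ that are excluded from these summations must then be added back in using the two central classes and four unipotent classes.

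The remaining contributions therefore come from the six small classes: the two central classes of size $1$ and the four regular unipotent classes of size $(q^2-1)/2$. These are responsible for the integer-coefficient multiplicities of the trivial, Steinberg, and Weil subrepresentations. The appearance of $\tfrac{1\pm\epsilon_0}{2}$ is forced at precisely this stage: the values of $\omega_\psi^\pm$ and $\omega_{\psi_\kappa}^\pm$ on the four unipotent classes differ by swapping $s\leftrightarrow t$ and $u\leftrightarrow v$, which after multiplying by the corresponding values of $\ch_{\omega_\psi}$ (the Gauss-sum quantities $\pm\sqrt{\epsilon_0 q}$ on the $\psi$- and $\psi_\kappa$-unipotent classes) collapses to expressions of the form $(1\pm\epsilon_0)/2$ that select exactly one of the pairs $(\omega_\psi^+,\omega_\psi^-)$ or $(\omega_{\psi_\kappa}^+,\omega_{\psi_\kappa}^-)$.

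I expect the main obstacle to be not conceptual but organizational: one must carefully track which $\chi\in A$ or $\mu\in B$ is being summed over, separate the contributions of the $\psi$- and $\psi_\kappa$-type unipotent classes, and propagate the sign $\epsilon_0$ through the Gauss-sum identity $\sqrt{\epsilon_0 q}^{\,2}=\epsilon_0 q$ consistently across all seven lines. A convenient running sanity check after each line is the dimension identity $\dim(\pi\otimes\omega_\psi)=q\dim\pi$; for $\pi=\St$ this gives $q^2=q+\tfrac{q-3}{2}(q+1)+\tfrac{q-1}{2}(q-1)+2\cdot\tfrac{q+1}{2}$, and the six remaining lines admit analogous dimension checks that immediately catch any bookkeeping error.
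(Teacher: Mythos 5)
Your proposal is correct and is essentially the paper's own proof: the paper likewise reduces the proposition to computing the inner products $(\ch_{\pi\otimes\omega_\psi},\ch_{V_i})$ against the character tables, carries out the representative case $(\ch_{I(\chi_1)\otimes\omega_\psi},\ch_{I(\chi)})$ in detail (where the shift by $\epsilon$ and the dichotomy $\chi=\epsilon\chi_1^{\pm1}$ versus $\chi\ne\epsilon\chi_1^{\pm1}$ emerge exactly as you describe), and omits the remaining cases as similar. Your dimension identity $\dim(\pi\otimes\omega_\psi)=q\dim\pi$ is a sensible extra sanity check not present in the paper, but the substance of the argument is the same.
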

\begin{proof}
Denote by $\wpair{V_i}$ a complete set of irreducible representations of $\SL_2(k)$. Then any representation $V$ of $\SL_2(k)$ can be written as 
$$V=\oplus m_i V_i.$$
We have $m_i=(\ch_V,\ch_{V_i})$, where $$(\ch_V,\ch_{V_i})=\frac{1}{|\SL_2(k)|}\sum_{g\in \SL_2(k)}\ch_V(g)\overline{\ch_{V_i}(g)}$$
is the standard inner product of $\ch_V$ and $\ch_{V_i}$. Thus to prove the proposition, one needs to compute various $(\ch_{\pi\otimes \omega_\psi},\ch_{\pi'})$ for any pair of irreducible representations $\pi,\pi'$ of $\SL_2(k)$ using the above tables. In the following, we only give the computation of $(\ch_{I(\chi_1)\otimes \omega_\psi},\ch_{I(\chi)})$ for a character $\chi$ with $\chi^2\ne 1$, and omit similar calculations of other cases. 

By the definition of the standard inner product, we have 
\begin{align*}
    &\ |\SL_2(k)|(\ch_{I(\chi_1)\otimes \omega_\psi},\ch_{I(\chi)})\\
    =&\ q(q+1)^2+(q+1)^2\chi_1\chi(-1)\epsilon_0+\frac{q^2-1}{2}\sqrt{\epsilon_0 q}+\frac{q^2-1}{2}(-\sqrt{\epsilon_0 q}) \\
    & +\frac{q^2-1}{2}\chi_1\chi(-1)\epsilon_0+\frac{q^2-1}{2}\chi_1\chi(-1)\epsilon_0 \\
    & +q(q+1)\sum_{x\in (k^\times-\wpair{\pm 1})/(\pm 1)}(\chi(x)+\chi(x^{-1}))(\epsilon\chi_1(x)+\epsilon\chi_1^{-1}(x))\\
    = & \ q(q+1)^2+2q(q+1)\chi_1\chi\epsilon(-1)+q(q+1)\sum_{x\in k^\times-\wpair{\pm 1}}(\chi\chi_1\epsilon(x)+\epsilon\chi^{-1}\chi_1(x)).
\end{align*}
Since $\chi(-1)=\chi^{-1}(-1)$ and $q(q+1)^2=q(q+1)(q-1)+2q(q+1)$, we get 
\begin{align*}
    &\ |\SL_2(k)|(\ch_{I(\chi_1)\otimes \omega_\psi},\ch_{I(\chi)})\\
    =&\ q(q+1)(q-1)+q(q+1)\sum_{x\in k^\times}(\chi\chi_1\epsilon(x)+\chi^{-1}\chi_1\epsilon(x)).
\end{align*}
Note that when $\chi\ne \epsilon \chi_1^{\pm}$, we have $ \sum_{x\in k^\times}(\chi\chi_1\epsilon(x)+\chi^{-1}\chi_1\epsilon(x))=0$, and if $\chi=\epsilon \chi_1,$ or $\chi=\epsilon\chi_1^{-1}$, we have $\sum_{x\in k^\times}(\chi\chi_1\epsilon(x)+\chi^{-1}\chi_1\epsilon(x))=q-1$. Since $|\SL_2(k)|=q(q+1)(q-1)$, we conclude that $ (\ch_{I(\chi_1)\otimes \omega_\psi},\ch_{I(\chi)})=1$ if $\chi\ne \epsilon \chi_1^{\pm 1}$, and $(\ch_{I(\chi_1)\otimes \omega_\psi},\ch_{I(\chi)})=2$ if $\chi=\epsilon\chi_1^{\pm 1}$.
\end{proof}

\begin{cor}\label{cor2.2} Let $\pi$ be an irreducible representation $\SL_2(k)$.
If $\pi\ne I(\chi_1)$ with $\chi_1^2\ne 1$, then the tensor product $\pi\otimes \omega_\psi$ is multiplicity free.
\end{cor}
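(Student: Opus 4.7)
The plan is to obtain this corollary as a direct inspection of the decompositions listed in Proposition \ref{prop2.1}. There is essentially no further work beyond reading off the multiplicities.

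First I would enumerate the cases. The irreducible representations of $\SL_2(k)$ are $1$, $\St$, $I(\chi)$ with $\chi^2\ne 1$, $\omega_\psi^{\pm}$, $\omega_{\psi_\kappa}^{\pm}$, and $\omega_{\psi,\mu}$ with $\mu^2\ne 1$. The trivial representation case is handled trivially since $1 \otimes \omega_\psi \cong \omega_\psi = \omega_\psi^+ \oplus \omega_\psi^-$, a sum of two inequivalent irreducibles. Every other case except $I(\chi_1)$ with $\chi_1^2 \ne 1$ appears as one of the seven lines of Proposition \ref{prop2.1}.

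Next I would check that in each of those admissible lines, every coefficient in the sum is at most $1$. For $\St \otimes \omega_\psi$ and $\omega_{\psi,\mu_1}\otimes \omega_\psi$ the decomposition is a direct sum of pairwise inequivalent summands with coefficient $1$ (using that $\mu_0 \mu_1^{\pm 1} \ne 1$ and $\mu_0\mu_1 \ne \mu_0\mu_1^{-1}$ under the hypothesis $\mu_1^2 \ne 1$, so that excluding $\mu_0\mu_1^{\pm 1}$ is unambiguous). For the four Weil-representation tensor products $\omega_\psi^{\pm}\otimes\omega_\psi$ and $\omega_{\psi_\kappa}^{\pm}\otimes\omega_\psi$, the only coefficients occurring are $1$ and $\tfrac{1\pm\epsilon_0}{2}$. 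Since $\epsilon_0 = \pm 1$, both $\tfrac{1+\epsilon_0}{2}$ and $\tfrac{1-\epsilon_0}{2}$ lie in $\{0,1\}$, so each irreducible summand appears at most once.

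The only obstacle is the bookkeeping check that the summands listed on the right-hand side of each decomposition are pairwise nonisomorphic; this follows from the classification recalled in Sections \ref{sec2.1}--\ref{cuspidal representations} (the families $\{I(\chi)\}$, $\{\omega_{\psi,\mu}\}$, $\{\omega_\psi^\pm,\omega_{\psi_\kappa}^\pm\}$, $\{1,\St\}$ are mutually disjoint and the only internal identifications are $I(\chi)\cong I(\chi^{-1})$ and $\omega_{\psi,\mu}\cong \omega_{\psi,\mu^{-1}}$, which are precisely the relations built into the indexing sets $A$ and $B$). Once this is observed, the corollary follows at once from Proposition \ref{prop2.1}.
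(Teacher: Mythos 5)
Your proposal is correct and is essentially the paper's own argument: the paper's proof is the one-line observation that the corollary follows from Proposition \ref{prop2.1} together with the fact that $\frac{1+\epsilon_0}{2},\frac{1-\epsilon_0}{2}\in\{0,1\}$, and your additional bookkeeping (the trivial-representation case and the pairwise non-isomorphism of the listed summands) just makes explicit what the paper leaves implicit.
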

\begin{proof}
The assertion follows from Proposition \ref{prop2.1} and the fact that $\frac{1+\epsilon_0}{2},\frac{1-\epsilon_0}{2}\in \wpair{0,1}$.
\end{proof}

 \begin{rmk} \label{rmk2.3}
 \rm{Let $k$ be a $p$-adic field, $n$ be a positive integer. Let $\wt{\Sp}_{2n}(k)$ be the metaplectic cover of the symplectic group $\Sp_{2n}(k)$, and $\omega_\psi$ be the Weil representation of $\wt{\Sp}_{2n}(k)$. Then by the main theorem of \cite{Su}, for any irreducible smooth genuine representation $\pi$ of $\wt{\Sp}_{2n}(k)$, the tensor product representation $\pi\otimes \omega_{\psi}$ of $\Sp_{2n}(k)$ is multiplicity free. While when $k$ is a finite field, Proposition \ref{prop2.1} shows that $\Hom_{\SL_2(k)}(I(\epsilon \chi_1),I(\chi_1)\otimes \omega_\psi)=2$. This shows that the multiplicity one result of $\pi\otimes \omega_\psi$ fails in general over finite fields. This kind of phenomenon is also known for Bessel models over finite fields (see \cite{GGP2,R} for examples).  \qed}
 \end{rmk}

\begin{rmk} \label{rmk2.4}
\rm{Over finite fields, few results are known on the multiplicities of decomposition of $\pi\otimes \omega_\psi|_{\Sp_{2n}(k)}$ for general $n$ and general irreducible representations $\pi$ of $\Sp_{2n}(k)$. To the author's knowledge, only when $\pi$ is the Steinberg representation of $\Sp_{2n}(k)$, it is shown in \cite[Corollary 1.3]{HZ} that $\pi\otimes\omega_{\psi}$ is multiplicity free. Based on the $\SL_2(k)$ case given in Proposition \ref{prop2.1} and the corresponding results for Bessel models \cite{GGP2,R}, one might guess that $\pi\otimes\omega_\psi|_{\Sp_{2n}(k)}$ should be multiplicity free when $\pi$ is an irreducible cuspidal  representation on $\Sp_{2n}(k)$.\qed}
\end{rmk}

\subsection{Dual representation}\label{sec1.6}
For $g\in \SL_2(k)$, we define ${}^\iota\! g=d_1gd_1$, where $d_1=\diag(-1,1)$. Then ${}^\iota$ is an involution of $\SL_2(k)$. Given an irreducible representation $\pi$ of $\SL_2(k)$, let ${}^\iota\! \pi$ be the representation such that its space is the same with $\pi$ and its action is given by ${}^\iota \!\pi(g)=\pi({}^\iota\! g)$. For later use, we record the following result 
\begin{lem}\label{lem2.5}
Let $\pi$ be an irreducible representation of $\SL_2(k)$, and $\tilde \pi$ be its dual representation. Then we have $\tilde \pi\cong {}^\iota\! \pi.$
\end{lem}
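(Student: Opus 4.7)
The plan is to use character theory. Since complex representations of the finite group $\SL_2(k)$ are determined up to isomorphism by their characters, it suffices to show $\ch_{\tilde\pi}(g) = \ch_{{}^\iota\pi}(g)$ for every $g \in \SL_2(k)$. Using the standard identities $\ch_{\tilde\pi}(g) = \ch_\pi(g^{-1})$ and, by definition, $\ch_{{}^\iota\pi}(g) = \ch_\pi(d_1 g d_1)$, and the fact that $\ch_\pi$ is a class function, the problem reduces to the purely group-theoretic statement that $g^{-1}$ and $d_1 g d_1$ lie in the same $\SL_2(k)$-conjugacy class for every $g \in \SL_2(k)$.

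I would verify this class by class using the representatives in Table \ref{table: conjugacy class of SL(2)}. For the central elements $\pm I$ the claim is trivial. For a unipotent-type representative $g = \bpm \pm 1 & y \\ 0 & \pm 1 \epm$, direct computation using $\det g = 1$ gives $g^{-1} = \bpm \pm 1 & -y \\ 0 & \pm 1 \epm = d_1 g d_1$ outright. For an elliptic representative $g = \bpm x & y \\ \kappa y & x \epm$ with $\Nm(\xi_{x,y}) = x^2 - \kappa y^2 = 1$, one again computes $g^{-1} = \bpm x & -y \\ -\kappa y & x \epm = d_1 g d_1$ on the nose. The only class requiring a genuine conjugation is the split regular semisimple class $g = \diag(x,x^{-1})$ with $x \ne \pm 1$: here $d_1 g d_1 = g$ while $g^{-1} = \diag(x^{-1}, x)$, and the two are conjugated inside $\SL_2(k)$ by the Weyl element $w = \bpm 0 & 1 \\ -1 & 0 \epm$.

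The point worth emphasizing, which explains why the $d_1$-twist in the definition of ${}^\iota$ is essential, is that $\SL_2(k)$ is not in general an ambivalent group: when $\epsilon_0 = -1$, the classes of $\bpm 1 & 1 \\ 0 & 1\epm$ and $\bpm 1 & -1 \\ 0 & 1\epm$ are distinct (cf.\ Table \ref{table: conjugacy class of SL(2)}), so one cannot hope for $\tilde\pi \cong \pi$ directly. Conjugation by $d_1 = \diag(-1,1)$ reverses the sign of the off-diagonal entries, which matches $g \mapsto g^{-1}$ precisely on upper-triangular and on elliptic matrices, and this is exactly what rescues the isomorphism $\tilde\pi \cong {}^\iota\pi$. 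The only mild obstacle is the bookkeeping of ensuring that the conjugating element in the split torus case lies in $\SL_2(k)$ rather than merely in $\GL_2(k)$, which is handled by the choice $w \in \SL_2(k)$ above.
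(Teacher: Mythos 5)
Your proof is correct, and it takes a slightly sharper route than the paper's. The paper also argues via characters, but it verifies the identity $\ch_{\tilde\pi}(g)=\ch_\pi(g^{-1})=\ch_\pi({}^\iota g)=\ch_{{}^\iota\pi}(g)$ by reading off the explicit character table of Section \ref{sec2.4} case by case, i.e.\ separately for each irreducible $\pi$. You instead prove the stronger, purely group-theoretic statement that $g^{-1}$ and $d_1gd_1$ are always $\SL_2(k)$-conjugate, which yields $f(g^{-1})=f({}^\iota g)$ for \emph{every} class function $f$ at once and so disposes of all irreducible representations simultaneously without ever consulting the character table. The reduction to class representatives is legitimate because both $g\mapsto g^{-1}$ and $g\mapsto d_1gd_1$ carry conjugacy classes to conjugacy classes (note $d_1hd_1\in\SL_2(k)$ for $h\in\SL_2(k)$), and your case checks are all accurate: the identities $g^{-1}=d_1gd_1$ for the unipotent-type and elliptic representatives, the conjugation of $\diag(x,x^{-1})$ to $\diag(x^{-1},x)$ by $w=\bpm 0&1\\-1&0\epm\in\SL_2(k)$, and the observation that when $\epsilon_0=-1$ the classes of $\bpm 1&1\\0&1\epm$ and $\bpm 1&-1\\0&1\epm$ are distinct (the latter being the class of $\bpm 1&\kappa\\0&1\epm$), which is exactly why the untwisted statement $\tilde\pi\cong\pi$ fails and the $d_1$-twist is needed. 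What your approach buys is uniformity and independence from the explicit character values; what the paper's buys is nothing extra here, though the table is needed elsewhere in the paper anyway.
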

\begin{proof}
We have $\ch_{\tilde \pi}(g)=\ch_{\pi}(g^{-1})$ and $\ch_{{}^\iota\!\pi}(g)=\ch_{\pi}({}^\iota\! g)$. From the character table given in Section \ref{sec2.4}, we can check case by case that $\ch_{\tilde \pi}=\ch_{{}^\iota\! \pi}$. Thus $\tilde \pi \cong {}^\iota\! \pi.$ 
\end{proof}
The involution ${}^\iota$ is called an MVW involution of $\SL_2(k)$ and can be defined in a more general setting, see \cite[p.91]{MVW}.

\section{Construction of transpose on certain End spaces}\label{sec3}
In this section, we introduce the multiplicity one problem of certain Fourier-Jacobi models on $\Sp_4$ and general strategies to attack such a  problem. We then construct transpose operators on certain End spaces as preparations for proving the multiplicity one theorems for $\Sp_4$, $\bG_2$, and $\mathrm{U}_4$.

\subsection{The problem}  For a positive integer $n$, let $J_n$ be the matrix defined by 
$$J_n=\bpm &1\\ J_{n-1}&\epm, J_1=(1).$$
 Let  $$\Sp_{2n}(k)=\wpair{g\in \GL_{2n}(k)|  g \bpm &J_n\\-J_n & \epm {}^t\!g=\bpm &J_n\\ -J_n &\epm}.$$
Note that $\Sp_2(k)=\SL_2(k)$. We will mainly focus on $\Sp_4(k)$ in this section. A typical element in the torus of $\Sp_4(k)$ has the form $t=\diag(a,b,b^{-1},a^{-1})$, $a,b\in k^\times$. Let $\alpha,\beta$ be the two simple roots defined by 
$$\alpha(t)=a/b, \quad \beta(t)=b^2, \textrm{ for } a,b\in k^\times.$$
Denote 
$$s_\alpha=\begin{pmatrix}&1&&\\ -1&&&\\ &&&-1\\ &&1& \end{pmatrix}, \textrm{ and } s_\beta=\begin{pmatrix}1&&&\\ &&1&\\ &-1&&\\ &&&1 \end{pmatrix}.$$
Then $s_\alpha, s_\beta$ are representatives of the reflections defined by $\alpha$ and $\beta$, respectively. We write 
$$\bx_\beta(b)=\begin{pmatrix}1&&&\\ &1&b&\\ &&1&\\ &&&1 \end{pmatrix},b\in k.$$

Let $P=MU$ be the parabolic subgroup of $\Sp_4(k)$ with Levi subgroup 
$$M=\wpair{\begin{pmatrix}a&&\\ &g&\\ &&a^{-1} \end{pmatrix},a\in k^\times, g\in \SL_2(k)},$$
and unipotent subgroup 
$$U=\wpair{\begin{pmatrix}1&*&*&*\\ &1&0&*\\ &&1&*\\ &&&1 \end{pmatrix}\in \Sp_4(k)}.$$
Let $M_0=\wpair{\diag(1,g,1),g\in \SL_2(k)}$ and let $J=M_0U\subset P$. We view $\SL_2(k)$ as a subgroup of $\Sp_4(k)$ via $\SL_2(k)\incl M_0$.

There is an isomorphism $\SL_2(k)\ltimes\sH\ra J$ defined by 
$$(g,[v,z])\ra \begin{pmatrix}1&v&z\\ &g&v^*\\ &&1 \end{pmatrix},$$
where $v=(x,y)\in W$ and $v^*=\begin{pmatrix}y\\ -x \end{pmatrix}$. We identify $[x,y,z]$ as an element in $J$ in this way. Thus the Weil representation $\omega_\psi$ of $\SL_2(k)\ltimes \sH$ gives a representation of $J$. Given an irreducible representation $\pi$ of $\SL_2(k)$, we consider the representation $\pi\otimes \omega_\psi$ of $J$. The action of $\pi\otimes \omega_\psi$ is given by $\pi\otimes \omega_\psi(j)(v_1\otimes v_2)=\pi(p(j))v_1\otimes \omega_\psi(j)v_2$, for $j\in J,v_1\in \pi,v_2\in \omega_\psi$, where $p:J\ra \SL_2(k)$ is the natural projection. It is known that $\pi\otimes \omega_\psi$ is irreducible as a representation of $J$, see \cite{Su} for a proof in the $p$-adic case which is also valid in the finite fields case.

 We might ask the question: for which irreducible representation $\pi$ of $\SL_2(k)$, the induced representation $\Ind_{J}^{\Sp_4(k)}(\pi\otimes \omega_\psi)$ is multiplicity free?

For a $p$-adic field $k$, given an irreducible genuine irreducible representation $\pi$ of $\wt{\SL}_2(k)$, and an irreducible smooth representation $\sigma$ of $\Sp_4(k)$, it is always true that 
$$\dim\Hom_J(\sigma,\pi\otimes\omega_\psi)\le 1,$$
which is the main theorem of \cite{BR}. Any nonzero element in $\Hom_J(\sigma,\pi\otimes\omega_\psi)$ gives an embedding $\sigma\incl \Ind_J^{\Sp_4(k)}(\pi\otimes\omega_\psi)$, which is called a \textbf{Fourier-Jacobi} model of $\sigma$ for the given datum $(\pi\otimes\omega_\psi, J)$. Thus the above result of \cite{BR} says that the Fourier-Jacobi model of $\sigma$ is unique (if it exists). Fourier-Jacobi models over local fields were defined in a more general context for many classical groups such as $\Sp_{2n}$, unitary groups and $\GL_n$ in \cite{GGP}; the uniqueness of Fourier-Jacobi model in the $p$-adic field case was proved in \cite{GGP,Su}, and was proved in \cite{LS} in the Archimedean case.

For a finite field $k$, our purpose is to show that $\Ind_J^{\Sp_4(k)}(\pi\otimes\omega_\psi)$ is multiplicity free when $\pi$ is an irreducible representation not of the form $ I(\chi)$, where $\chi$ is a non-quadratic character of $k^\times$, see Section \ref{sec4}. When $\pi=I(\chi)$, it turns out that the induced representation $\Ind_J^{\Sp_4(k)}(I(\chi)\otimes\omega_\psi)$ is in general not multiplicity free, see Remark \ref{rmk4.2}. This in fact is not surprising after Proposition \ref{prop2.1}.

\subsection{The general strategy}\label{sec3.2} Given a  group $G$, an anti-involution ${}^\tau$ on $G$ is a map ${}^\tau:G\ra G$ such that ${}^\tau\!({}^\tau\! g)=g$ and ${}^\tau\! (g_1g_2)={}^\tau\! g_2{}^\tau\! g_1,$ for all $g,g_1,g_2\in G$.

In this subsection, let $G$ be an arbitrary finite group and $H$ be a subgroup of $G$.  Let $\sigma$ be a representation of $H$. We consider the algebra 
$$\CA(G,H,\sigma)=\wpair{K:G\ra \End_{\mathbb{C}}(\sigma): K(h_1gh_2)=\sigma(h_1)K(g)\sigma(h_2)}.$$
The product in $\CA(G,H,\sigma)$ is given by convolution:
$$K_1*K_2(g)=\sum_{x\in G}K_1(gx^{-1})K_2(x).$$
For $K\in \CA(G,H,\sigma)$ and $f\in \Ind_H^G(\sigma)$, we define a function $K*f:G\ra \sigma$ by 
$$(K*f)(x)=\frac{1}{|G|}\sum_{g\in G}K(xg^{-1})f(g).$$
One can check that $K*f\in \Ind_{H}^G(\sigma)$. Denote by $L_K\in \End(\Ind_H^G(\sigma))$ the endomorphism $f\mapsto K*f$.
\begin{thm}[Mackey, see {\cite[p.3]{Pr}}]\label{thm3.1}
The assignment $K\mapsto L_K$ defines an isomorphism
between $\CA(G,H,\sigma)$ and $\End_G(\Ind_H^G(\sigma))$.
\end{thm}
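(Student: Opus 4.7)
The plan is to prove Theorem 3.1 by the standard four steps: (i) $L_K \in \End_G(\Ind_H^G(\sigma))$ for every $K \in \CA(G,H,\sigma)$; (ii) $K \mapsto L_K$ respects the convolution product; (iii) injectivity; and (iv) surjectivity via an explicit inverse.

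Step (i) is a direct calculation: the left $H$-equivariance of $L_K f$ follows from $K(h_1 g) = \sigma(h_1) K(g)$ inside the sum $(L_K f)(h_1 x) = |G|^{-1}\sum_y K(h_1 x y^{-1}) f(y)$, while commutation with right translation by $g\in G$ follows from the substitution $z = yg$ in $|G|^{-1}\sum_y K(xy^{-1}) f(yg)$. Step (ii) reduces, after an interchange of summation, to the identity $\sum_g K_1(xg^{-1}) K_2(gh^{-1}) = (K_1*K_2)(xh^{-1})$, obtained via the substitution $y = gh^{-1}$; this yields $L_{K_1}\circ L_{K_2}$ proportional to $L_{K_1 * K_2}$ with a universal normalization constant that can be absorbed by renormalizing either the convolution or the map $K \mapsto L_K$.

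For steps (iii) and (iv), I would construct the inverse directly. For each $v \in \sigma$, let $\phi_v \in \Ind_H^G(\sigma)$ denote the unique function supported on $H$ with $\phi_v(h) = \sigma(h)v$; a short check shows $h\cdot \phi_v = \phi_{\sigma(h)v}$ under right translation by $h \in H$. Given $T \in \End_G(\Ind_H^G(\sigma))$, define $K_T: G \to \End_{\mathbb{C}}(\sigma)$ by $K_T(g)v = c\,(T\phi_v)(g)$ for an appropriate universal constant $c$. The required bi-$H$-equivariance $K_T(h_1 g h_2) = \sigma(h_1) K_T(g) \sigma(h_2)$ splits into two independent checks: the left $H$-equivariance is the induction condition on $T\phi_v \in \Ind_H^G(\sigma)$, and the right $H$-equivariance comes from $G$-equivariance of $T$ combined with the identity $h_2 \cdot \phi_v = \phi_{\sigma(h_2)v}$.

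To close the loop, I would use the decomposition $f = |H|^{-1}\sum_{x \in G} x^{-1} \cdot \phi_{f(x)}$ for an arbitrary $f \in \Ind_H^G(\sigma)$, which shows that any $G$-equivariant endomorphism is determined by its restriction to the functions $\phi_v$; consequently $L_{K_T} = T$ becomes a direct verification, and $K_{L_K} = K$ is an unwinding of definitions. The main technical obstacle is tracking the normalization constants so that the map is genuinely an isomorphism of algebras rather than of vector spaces only; otherwise the argument is routine bookkeeping with no conceptual difficulty.
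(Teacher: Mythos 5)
The paper gives no proof of Theorem \ref{thm3.1}; it is quoted from Prasad's notes \cite{Pr}, so there is no argument of the authors' to compare yours against. Your proposal is the standard proof and it is correct and essentially complete. All four steps check out: $L_Kf$ is left-$H$-equivariant and commutes with right translation; the substitution $y=gh^{-1}$ does give $\sum_g K_1(xg^{-1})K_2(gh^{-1})=(K_1*K_2)(xh^{-1})$; the functions $\phi_v$ supported on $H$ satisfy $h\cdot\phi_v=\phi_{\sigma(h)v}$ and span $\Ind_H^G(\sigma)$ via $f=|H|^{-1}\sum_{x\in G}x^{-1}\cdot\phi_{f(x)}$, so $T\mapsto K_T$ is a two-sided inverse. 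The constant you leave implicit works out to $c=[G:H]=|G|/|H|$: one finds $L_{K_T}=\frac{c|H|}{|G|}T$ and $K_{L_K}=\frac{c|H|}{|G|}K$, both of which become identities with this choice. Your caution about normalization is warranted on one point: with the paper's literal definitions one gets $L_{K_1}\circ L_{K_2}=\frac{1}{|G|}L_{K_1*K_2}$, so $K\mapsto L_K$ intertwines the two products only up to the fixed scalar $|G|^{-1}$; to get an algebra isomorphism on the nose one must drop the $|G|^{-1}$ from the definition of $K*f$ (or insert it into the convolution on $\CA(G,H,\sigma)$), exactly as you propose. This scalar is of course irrelevant for the only use made of the theorem here, namely that $\Ind_H^G(\sigma)$ is multiplicity free if and only if $\CA(G,H,\sigma)$ is commutative.
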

\begin{cor}
The induced representation $\Ind_H^G(\sigma)$ is multiplicity free if and only if the algebra $\CA(G,H,\sigma)$ is commutative. 
\end{cor}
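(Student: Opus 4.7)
The plan is to reduce the statement, via Mackey's isomorphism (Theorem 3.1), to the elementary fact that the endomorphism algebra of a finite-dimensional semisimple representation is commutative precisely when the representation is multiplicity free.

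First I would invoke Theorem 3.1 to identify $\CA(G,H,\sigma)$ with $\End_G(\Ind_H^G(\sigma))$ as algebras. Since both products correspond under this isomorphism, commutativity of $\CA(G,H,\sigma)$ is equivalent to commutativity of $\End_G(\Ind_H^G(\sigma))$.

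Next I would use semisimplicity of representations of the finite group $G$ over $\mathbb{C}$ to decompose
\[
\Ind_H^G(\sigma)\cong \bigoplus_{i} m_i V_i,
\]
where the $V_i$ run over a set of pairwise non-isomorphic irreducible representations of $G$ and $m_i\in\mathbb{Z}_{\ge 0}$ are the multiplicities. By Schur's lemma, $\Hom_G(V_i,V_j)=0$ for $i\ne j$ and $\End_G(V_i)=\mathbb{C}$, so
\[
\End_G(\Ind_H^G(\sigma))\cong \prod_i \End_G(m_iV_i)\cong \prod_i \Mat_{m_i}(\mathbb{C}).
\]
A finite product of matrix algebras is commutative if and only if each factor $\Mat_{m_i}(\mathbb{C})$ is commutative, which happens exactly when $m_i\le 1$ for every $i$. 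This last condition is by definition the statement that $\Ind_H^G(\sigma)$ is multiplicity free.

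Combining these two reductions yields the corollary. There is no real obstacle here; the corollary is essentially a formal consequence of Mackey's theorem (Theorem 3.1) together with Schur's lemma, and I would present it in a few lines. The only small point worth noting is that the isomorphism in Theorem 3.1 is one of algebras (not merely of vector spaces), so the transport of the commutativity property is legitimate; this is implicit in the way convolution is defined.
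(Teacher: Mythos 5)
Your proposal is correct and follows the same route as the paper: the paper likewise combines Theorem 3.1 (Mackey's isomorphism of algebras) with the Schur's lemma fact that $\End_G(\Ind_H^G(\sigma))$ is commutative precisely when the induced representation is multiplicity free. You merely spell out the matrix-algebra decomposition that the paper leaves implicit.
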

\begin{proof}
By Schur's Lemma, the representation $\Ind_H^G(\sigma)$ is multiplicity free if and only if $\End_G(\Ind_H^G(\sigma))$ is commutative. Then the assertion follows from Mackey's Theorem, Theorem \ref{thm3.1}, directly.
\end{proof}
We are going to use the Gelfand-Kazhdan method to prove the commutativity of certain $\CA(G,H,\sigma)$.  We assume that there exists an anti-involution ${}^\tau$ of $G$ such that ${}^\tau H=H$, and there exists an anti-involution ${}^t$ on $\End_{\mathbb{C}}(\sigma)$ such that ${}^t (\sigma(h))=\sigma({}^\tau\! h)$ for all $h\in H$.  Then for $K\in \CA(G,H,\sigma)$, we can define ${}^\tau\! K:G\ra \End_{\mathbb{C}}(\sigma)$ by
$$({}^\tau \!K)(g)={}^t( K({}^\tau\! g)).$$
\begin{lem}\label{lem3.3}
For $K, K_1,K_2\in \CA(G,H,\sigma)$, we have
\begin{enumerate}
\item ${}^\tau \! K(h_1gh_2)=\sigma(h_1) {}^\tau\! K(g)\sigma(h_2), \forall h_1,h_2\in H,g\in G$; thus ${}^\tau \! K\in \CA(G,H,\sigma);$
\item ${}^\tau({}^\tau \! K)=K;$
\item ${}^\tau(K_1*K_2)={}^\tau \! K_2*{}^\tau \! K_1.$
\end{enumerate}
Thus, ${}^\tau$ is an anti-involution on $\CA(G,H,\sigma)$.
\end{lem}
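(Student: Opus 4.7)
The proof plan is direct verification of each of the three claims, relying only on (i) the anti-involution property of ${}^\tau$ on $G$, (ii) the anti-involution property of ${}^t$ on $\End_{\mathbb{C}}(\sigma)$, (iii) the compatibility ${}^t(\sigma(h))=\sigma({}^\tau\! h)$, and (iv) the hypothesis ${}^\tau\! H=H$ so that ${}^\tau\! h\in H$ whenever $h\in H$.

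For part (1), I would unwind the definition ${}^\tau\! K(g)={}^t(K({}^\tau\! g))$. Using ${}^\tau(h_1gh_2)={}^\tau\! h_2\cdot {}^\tau\! g\cdot {}^\tau\! h_1$ and the fact that $K$ is bi-$H$-equivariant, I get $K({}^\tau\!h_2\cdot{}^\tau\! g\cdot{}^\tau\! h_1)=\sigma({}^\tau\!h_2)K({}^\tau\!g)\sigma({}^\tau\!h_1)$. Applying ${}^t$ reverses the order of the three factors, and each ${}^t(\sigma({}^\tau\!h_i))$ becomes $\sigma(h_i)$ by the compatibility hypothesis, giving $\sigma(h_1)\cdot{}^\tau\!K(g)\cdot\sigma(h_2)$.

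For part (2), the identity is immediate from writing ${}^\tau({}^\tau\!K)(g)={}^t\bigl({}^\tau\!K({}^\tau\!g)\bigr)={}^t\bigl({}^t(K({}^\tau({}^\tau\!g)))\bigr)$ and using that both ${}^t$ and ${}^\tau$ are involutions. For part (3), the main point is a change of variables in the convolution. Starting from $(K_1*K_2)({}^\tau\!g)=\sum_{x}K_1({}^\tau\!g\cdot x^{-1})K_2(x)$, I apply ${}^t$, which reverses each product, and then rewrite ${}^t(K_2(x))={}^\tau\!K_2({}^\tau\!x)$ and ${}^t(K_1({}^\tau\!g\cdot x^{-1}))={}^\tau\!K_1(({}^\tau\!x)^{-1}g)$, using ${}^\tau(x^{-1})=({}^\tau\!x)^{-1}$. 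Substituting $y={}^\tau\!x$ transforms the sum into $\sum_{y}{}^\tau\!K_2(y)\cdot {}^\tau\!K_1(y^{-1}g)$, which after the reindexing $z=y^{-1}g$ is exactly $({}^\tau\!K_2*{}^\tau\!K_1)(g)$.

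There is no genuine obstacle here; the whole lemma is bookkeeping with two anti-involutions. The only step where one must be a little careful is (3), specifically being consistent about the order reversal produced by ${}^t$ and by ${}^\tau$ and tracking the change of variable on $G$, so this is the step I would write out most explicitly.
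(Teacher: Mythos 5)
Your verification is correct and is precisely the routine computation the paper has in mind: the paper omits the proof entirely, remarking only that it is routine and pointing to Teicher's paper for the detailed calculation. All three parts check out, including the two order reversals and the changes of variables $y={}^\tau\! x$ and $z=y^{-1}g$ in part (3), which is indeed the only place where care is needed.
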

The proof is routine and thus omitted. Certain detailed computation could be found in \cite{T}.
\begin{cor}\label{cor3.4}
Let $\wpair{g_i,1\le i\le n}$ be a subset of $G$ such that $G=\cup_{i,1\le i\le n}Hg_i H$. If  ${}^\tau \! K(g_i)=K(g_i)$ for all $i$ with $1\le i\le n$ for all $K\in \CA(G,H,\sigma)$, then $\CA(G,H,\sigma)$ is commutative and thus $\Ind_H^G(\sigma)$ is multiplicity free.
\end{cor}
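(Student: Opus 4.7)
The plan is the standard Gelfand-Kazhdan argument: the hypothesis on the double coset representatives is used to show that the anti-involution ${}^\tau$ acts as the identity on the entire algebra $\CA(G,H,\sigma)$, and then commutativity follows formally from Lemma \ref{lem3.3}(3).

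First, I would observe that any element $K\in \CA(G,H,\sigma)$ is determined by its values on a set of representatives of $H\backslash G/H$, because for $g = h_1 g_i h_2$ we have $K(g) = \sigma(h_1) K(g_i) \sigma(h_2)$ by the defining bi-equivariance. By Lemma \ref{lem3.3}(1), the function ${}^\tau K$ satisfies the very same bi-equivariance, so it is also determined by its values on $\{g_i\}$. Combining this with the assumption ${}^\tau K(g_i) = K(g_i)$ for all $i$, I conclude that ${}^\tau K(g) = K(g)$ for every $g\in G$ and every $K\in \CA(G,H,\sigma)$; in other words, ${}^\tau$ acts as the identity on the algebra $\CA(G,H,\sigma)$.

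Given this, commutativity is immediate: for any $K_1,K_2\in \CA(G,H,\sigma)$,
\[
K_1 * K_2 \;=\; {}^\tau(K_1 * K_2) \;=\; {}^\tau K_2 * {}^\tau K_1 \;=\; K_2 * K_1,
\]
where the first and third equalities use that ${}^\tau$ is the identity, and the middle equality is Lemma \ref{lem3.3}(3). Hence $\CA(G,H,\sigma)$ is commutative, and by the preceding corollary (which is a direct consequence of Mackey's Theorem \ref{thm3.1} and Schur's lemma) the induced representation $\Ind_H^G(\sigma)$ is multiplicity free.

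There is essentially no obstacle in this corollary itself; the entire content lies in verifying its hypotheses in each concrete setting. The real difficulty, which the rest of the paper must address, is (a) choosing a good anti-involution ${}^\tau$ on $G$ preserving $H$ together with a compatible ${}^t$ on $\End_{\BC}(\sigma)$, (b) enumerating a convenient set of double coset representatives $\{g_i\}$ for $H\backslash G/H$, and (c) showing, for each $g_i$, that every $K\in \CA(G,H,\sigma)$ automatically satisfies ${}^t K({}^\tau g_i) = K(g_i)$. The last step typically requires a case-by-case analysis exploiting the stabilizer of $g_i$ in $H\times H$ under the twisted action $(h_1,h_2)\cdot g = h_1 g\, {}^\tau h_2$, so that the relation $K(h_1 g_i h_2) = \sigma(h_1) K(g_i) \sigma(h_2)$ forces enough intertwining conditions on $K(g_i)$ to deduce the desired equality.
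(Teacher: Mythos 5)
Your proof is correct and follows exactly the paper's argument: bi-equivariance plus Lemma \ref{lem3.3}(1) upgrades the hypothesis on the $g_i$ to ${}^\tau K = K$ on all of $G$, and then commutativity follows from ${}^\tau$ being an anti-involution via $K_1*K_2={}^\tau(K_1*K_2)={}^\tau K_2*{}^\tau K_1=K_2*K_1$. The paper's own proof is just a terser version of the same reasoning.
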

\begin{proof}
Since $G=\cup Hg_iH$ and then ${}^\tau \! K(g)=K(g)$ for all $g\in G,K\in \CA(G,H,\sigma)$ from the assumption. Thus ${}^\tau \! K=K$. Since ${}^\tau$ is an anti-involution, we get that $\CA(G,H,\sigma)$ is commutative.
\end{proof}

\subsection{An anti-involution}
 Denote $d=\diag(-1,-1,1,1).$ For $g\in \Sp_4(k)$, we define
$${}^\iota g=d^{-1}gd. \quad {}^\tau\!g={}^\iota g^{-1}.$$
Then ${}^\iota$ is the MVW involution on $\Sp_4(k)$, see \cite[p.91]{MVW}, and ${}^\tau$ is an anti-involution on $\Sp_4(k)$.  We have 
$${}^\iota [x,y,z]=[x,-y,-z],$$
and $${}^\iota \begin{pmatrix}1&&&\\ &a&b& \\ &c&d&\\ &&&1 \end{pmatrix}=\begin{pmatrix}1&&&\\ &a&-b& \\ &-c&d&\\ &&&1 \end{pmatrix}.$$
In particular, ${}^\iota \! J=J$. Note that the restriction of the involution ${}^\iota$ to $\SL_2(k)$ is exactly the MVW involution considered in Section \ref{sec1.6}.
Given a representation $\sigma$ of $J$, we denote by ${}^\iota\! \sigma$ the representation  ${}^\iota \! \sigma(g)=\sigma({}^\iota\! g)$. 

Let $\pi$ be an irreducible representation of $\SL_2(k)$ which is not fully induced from the Borel subgroup and let $\sigma_\pi=\pi\otimes\omega_\psi$. The aim of the rest of this section is to define a transpose ${}^t$ on $\End(\sigma_\pi)$ such that ${}^t (\sigma(j))=\sigma({}^\tau \! j)$ for all $j\in J$.
First we need to define pairs between $\omega_\psi$ and ${}^\iota\!\omega_\psi$, $\pi$ and ${}^{\iota}\pi$.

\subsection{A pair on \texorpdfstring{$\omega_\psi\times {}^\iota\!\omega_\psi$}{Lg}}\label{sec3.4} 

For $a,b\in k$, recall the delta function $$\delta_{a,b}=\left\{\begin{array}{ll}1, & \textrm{ if } a=b;\\ 0, & \textrm{ if } a\ne b.\end{array} \right.$$
Then the space $\CS(k)$ has a basis $\wpair{\delta_s,s\in k}$, where $\delta_s(t)=\delta_{s,t}$.

\begin{lem}\label{lem3.5}
Consider the pair
$$\pair{\phi,\phi'}=\sum_{\xi\in k}\phi(\xi)\phi'(\xi),\phi,\phi'\in \CS(k).$$
We have $$\pair{\omega_\psi(j)\phi, \omega_\psi({}^\iota\!j)\phi'}=\pair{\phi,\phi'},\forall j\in J,\phi,\phi'\in \CS(k).$$
\end{lem}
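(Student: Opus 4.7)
The plan is to verify the identity on a generating set of $J \cong \SL_2(k)\ltimes \sH$ and then extend by multiplicativity. Since the element $d=\diag(-1,-1,1,1)$ satisfies $d^2 = I$, the map ${}^\iota$ is a group homomorphism on $\Sp_4(k)$, hence on $J$. Consequently, if the identity $\pair{\omega_\psi(j_i)\phi,\omega_\psi({}^\iota j_i)\phi'}=\pair{\phi,\phi'}$ holds for $i=1,2$ and all $\phi,\phi'$, then applying it for $j_1$ to the pair $(\omega_\psi(j_2)\phi,\omega_\psi({}^\iota j_2)\phi')$ and then for $j_2$ to $(\phi,\phi')$ gives the identity for $j_1j_2$. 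This reduces matters to checking a generating set.

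I take as generators the three Heisenberg strands $[x,0,0]$, $[0,y,0]$, $[0,0,z]$, together with the Bruhat generators of $\SL_2(k)$: the torus element $\diag(a,a^{-1})$, the upper unipotent $\begin{pmatrix}1&b\\ &1\end{pmatrix}$, and the long Weyl element $\begin{pmatrix}0&b\\ -b^{-1}&0\end{pmatrix}$. Under ${}^\iota$ one has $[x,y,z]\mapsto [x,-y,-z]$, the torus is fixed, and in the two $\SL_2$ non-torus generators $b$ is replaced by $-b$. For $[x,0,0]$, $\omega_\psi$ acts by translation, and the pairing is preserved by a change of summation variable. For $[0,y,0]$ and $[0,0,z]$, $\omega_\psi$ acts by the scalar characters $\psi(2\xi y)$ and $\psi(z)$ respectively, and ${}^\iota$ produces the inverse character, so the two factors cancel pointwise. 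For $\diag(a,a^{-1})$, fixed by ${}^\iota$, one gets $\epsilon(a)^2=1$ together with a change of variable $\xi\mapsto a\xi$. For $\begin{pmatrix}1&b\\ &1\end{pmatrix}$, the scalar factor $\psi(b\xi^2)$ cancels its conjugate $\psi(-b\xi^2)$ coming from $b\mapsto -b$.

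The substantive case is the Weyl element, which is where I expect the only nontrivial input. Expanding and interchanging the order of summation, the pairing becomes
\begin{align*}
\frac{1}{\gamma(b,\psi)\gamma(-b,\psi)}\sum_{x,y\in k}\phi(x)\phi'(y)\sum_{\xi\in k}\psi(2b(x-y)\xi).
\end{align*}
Since $b\ne 0$ and the characteristic is odd, the inner character sum equals $q\,\delta_{x,y}$ by orthogonality, collapsing the expression to $q\cdot\pair{\phi,\phi'}/(\gamma(b,\psi)\gamma(-b,\psi))$. The identity $\gamma(b,\psi)\gamma(-b,\psi)=\gamma(b,\psi)\overline{\gamma(b,\psi)}=q$ for a nontrivial additive character then makes the normalization exactly $1/q$, and the two factors of $q$ cancel to yield $\pair{\phi,\phi'}$. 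This Gauss-sum cancellation is the main obstacle; all other generators reduce to pointwise cancellations or a change of variable.
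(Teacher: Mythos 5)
Your proof is correct and follows essentially the same route as the paper's: reduce to a generating set of $J$ via multiplicativity of the pairing identity, dispose of the Heisenberg, torus, and unipotent generators by pointwise cancellation or change of variable, and settle the Weyl element with the Gauss-sum identity $\gamma(b,\psi)\gamma(-b,\psi)=q$. The only cosmetic difference is that the paper verifies the Weyl-element case on the delta-function basis (and only for $b=1$), whereas you argue with general test functions via orthogonality of characters; the content is identical.
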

\begin{proof}
 If $j=[x,0,z]$, we have $\omega_\psi(j)\phi(\xi)=\psi(z)\phi(\xi+x)$. On the other hand, we have ${}^\iota[x,0,z]=[x,0,-z]$. Thus ${}^\iota \omega_\psi(j)\phi'(\xi)=\omega_\psi([x,0,-z])\phi'(\xi)=\psi(-z)\phi'(\xi+x)$. By changing variable on the summation, we get
$$\pair{\omega_\psi([x,0,z]\phi),{}^\iota \omega_\psi([x,0,z])\phi'}=\sum_{\xi\in k}\phi(\xi)\phi'(\xi)=\pair{\phi,\phi'}.$$
Similarly, we can show that 
$$\pair{\omega_\psi(j)\phi,{}^\iota \omega_\psi(j)\phi'}=\pair{\phi,\phi'},\textrm{ for }j=[0,y,0], \diag(a,a^{-1}), \bx_\beta(b).$$

Let $w=\begin{pmatrix}&1\\ -1 \end{pmatrix}$ and $w'=\begin{pmatrix}&-1\\ 1 \end{pmatrix}$, then under the embedding $\SL_2(k)\incl \Sp_4(k)$, we have ${}^\iota w=w'.$ We need to show that
$$\pair{\omega_\psi(w)\phi,\omega_\psi(w')\phi'}=\pair{\phi,\phi'}.$$
Since $\CS(k)$ is spanned by $\wpair{\delta_s}$ and the pair $\pair{~,~}$ is bilinear, it suffices to show that 
$$\pair{\omega_\psi(w)\delta_s,\omega_\psi(w')\delta_t}=\pair{\delta_s,\delta_t}=\delta_{s,t}.$$
By the formula (\ref{eq2.1}), we have 
$$\omega_\psi(w)\delta_s(\xi)=\frac{1}{\gamma(1,\psi)}\psi(2s\xi),\omega_\psi(w')\delta_t(\xi)=\frac{1}{\gamma(-1,\psi)}\psi(-2t\xi).$$
Thus we get 
$$\pair{\omega_\psi(w)\delta_s,\omega_\psi(w')\delta_t}=\frac{1}{\gamma(1,\psi)\gamma(-1,\psi)}\sum_{\xi\in k}\psi(2(s-t)\xi)=\frac{q}{\gamma(1,\psi)\gamma(-1,\psi)}\delta_{s,t},$$
where the last step follows from $\sum_{\xi\in k}\psi(2(s-t)\xi)=q\delta_{s,t}. $ On the other hand, it is well-known that $\gamma(1,\psi)\gamma(-1,\psi)=q$, see \cite[Exercise 4.1.14, p.420]{Bu} for example. Thus we get 
$$\pair{\omega_\psi(w)\delta_s,\omega_\psi(w')\delta_t}=\pair{\delta_s,\delta_t}=\delta_{s,t}. $$

Since $J=\SL_2(k)\ltimes \sH$ is generated by $\diag(a,a^{-1}), \bx_\beta(b)$, $w$, $[x,0,z]$ and $[0,y,0]$, we get that 
$$\pair{\omega_\psi(j)\phi,{}^\iota \omega_\psi(j)\phi'}=\pair{\phi,\phi'}, \forall j\in J.$$
This completes the proof of the lemma.
\end{proof}
Note that the pair constructed in Lemma \ref{lem3.5} is symmetric and satisfies the property
\begin{equation}\label{eq3.1}
\pair{\delta_s,\delta_t}=\delta_{s,t}.
\end{equation}

\subsection{A pair on \texorpdfstring{$\St\otimes {}^\iota \St$}{Lg}}\label{sec3.5} Let $1$ be the trivial character of $k^\times$. We consider the induced representation $I(1)$ of $\SL_2(k)$. An element $f\in I(1)$ is a function $f:\SL_2(k)\ra \BC$ such that 
$$f\left(bg \right)=f(g),\forall b\in B_{\SL_2},g\in \SL_2(k),$$
where $B_{\SL_2}$ is the upper triangular subgroup of $\SL_2(k)$. 
For $f_1,f_2\in I(1)$, we define a pair 
$$\pair{f_1,f_2}=\sum_{g\in B_{\SL_2}\backslash \SL_2(k)}f_1(g)f_2(d_1gd_1),$$
where $d_1=\begin{pmatrix}-1&\\ &1 \end{pmatrix}\in \GL_2(k)$.
Note that this pair is well-defined and symmetric. Moreover, it satisfies the property 
\begin{align}
\pair{r(g)f_1,r({}^\iota\! g)f_2}=\pair{f_1,f_2}, \forall g\in \SL_2(k),f_1,f_2\in I(1),
\end{align}
where $r(g)f$ denotes the right translation action of $g$ on the sections $f$.

Recall that we have the Bruhat decomposition $\SL_2(k)=B_{\SL_2}\cup B_{\SL_2}wN_{\SL_2}$, where $w=\begin{pmatrix}&1\\ -1& \end{pmatrix}$. Let $f_0\in I(1)$ whose support is in $B_{\SL_2}$ and $f_0(b)=1,$ for all $b\in B_{\SL_2}$. For $r\in k$, let $f_{w,r}\in I(1)$ be the function such that its support is $B_{\SL_2}w \begin{pmatrix}1&r\\ &1 \end{pmatrix}$, and $f_{w,r}\left(bw  \begin{pmatrix}1&r\\ &1 \end{pmatrix}\right)=1, \forall b\in B_{\SL_2}$. Then $\wpair{f_0, f_{w,r}: r\in k}$ forms a basis of $I(1)$. Let $f_1=f_0+\sum_{r\in k}f_{w,r}$, i.e., $f_1(g)=1$ for all $g\in \SL_2(k)$.
\begin{lem} \label{lem3.6}
We have following formulas:
\begin{align}
\begin{split}\label{eq3.3}
\pair{f_1,f_1}&=q+1;\\
\pair{f_1,f_{w,r}}&=1,\forall r\in k;\\
\pair{f_{w,r},f_{w,s}}&=\delta_{s,-r}, \forall r,s\in k.
\end{split}
\end{align}
\end{lem}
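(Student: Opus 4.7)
The plan is to pick an explicit set of coset representatives for $B_{\SL_2}\backslash\SL_2(k)$ and just unfold the definition of the pairing. By the Bruhat decomposition $\SL_2(k)=B_{\SL_2}\cup B_{\SL_2}wN_{\SL_2}$, I would take the $q+1$ representatives to be $e$ together with $w\bpm 1 & r\\ & 1\epm$ for $r\in k$; under the identification of $I(1)$ with functions on these cosets, these correspond precisely to the basis vectors $f_0$ and $\{f_{w,r}\}_{r\in k}$ already introduced. With this choice, the first formula is immediate: since $f_1\equiv 1$, the sum defining $\pair{f_1,f_1}$ just counts representatives, giving $q+1$.

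For the remaining two identities, the only nontrivial input is to locate the Bruhat cell of the conjugate $d_1\,w\bpm 1 & s\\ & 1\epm d_1$. A direct $2\times 2$ matrix multiplication gives
\[
 d_1\, w\bpm 1 & s\\ & 1\epm d_1 \;=\; \bpm 0 & -1\\ 1 & -s\epm \;=\; \bpm -1 & \\ & -1\epm\, w\bpm 1 & -s\\ & 1\epm,
\]
so this element lies in $B_{\SL_2}\, w\bpm 1 & -s\\ & 1\epm$. Similarly $d_1 e d_1=e\in B_{\SL_2}$ lies in the small cell and contributes nothing to either of the remaining pairings (since $f_{w,r}$ vanishes on $B_{\SL_2}$). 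Plugging these two facts into the sum defining the pairing, the condition $f_{w,r}(d_1 g d_1)=1$ forces the representative $g$ to be $w\bpm 1 & -r\\ & 1\epm$, so exactly one term survives. This yields $\pair{f_1,f_{w,r}}=1$ and $\pair{f_{w,r},f_{w,s}}=\delta_{s,-r}$, as required.

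There is no real obstacle here; the calculation is elementary linear algebra in $\SL_2(k)$. The only conceptual observation is that conjugation by $d_1$ normalizes the Weyl element $w$ while reversing the sign of the unipotent parameter, and this sign change is precisely what produces $\delta_{s,-r}$ rather than $\delta_{s,r}$ in the third formula, in agreement with the fact that the pairing pairs $f$ against ${}^{\iota}$-translates.
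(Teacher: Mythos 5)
Your computation is correct and is exactly the "direct computation" the paper invokes without writing out: you choose the Bruhat-cell coset representatives, observe that conjugation by $d_1$ fixes the small cell and sends $w\bpm 1&s\\&1\epm$ into $B_{\SL_2}w\bpm 1&-s\\&1\epm$, and read off the three formulas. The sign flip you identify is indeed the source of the $\delta_{s,-r}$, so nothing is missing.
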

\begin{proof}
This follows from direct computations.
\end{proof}
Recall that $I(1)=1\oplus \St$ as a representation of $\SL_2(k)$.  The subspace of $I(1)$ spanned by $f_1$ is an invariant subspace of $I(1)$ and it corresponds to the trivial representation of $\SL_2(k)$. For $r\in k$, define $F_r=f_1-f_{w,r}$. One can check that the space generated by $\{F_r,r\in k\}$ is also invariant under the action of $\SL_2(k)$, which is the space of $\St$. By Lemma \ref{lem3.6}, we have 
\begin{equation}\label{eq3.4}
\pair{F_r,F_{r'}}=q-1+\delta_{r,-r'}.
\end{equation}

\subsection{A pair on \texorpdfstring{$\omega_\psi^{\pm}\otimes {}^\iota\omega_\psi^{\pm}$}{Lg}}\label{sec3.6}
Recall that the space $\CS^+(k)$ of $\omega_\psi^+$ consists of functions $\phi\in \CS(k)$ with $\phi(-x)=\phi(x)$ for all $x\in k$. 
Let $A_0$ be a set of representatives of $k^\times/\wpair{\pm 1}$.
Then the set $\wpair{2\delta_0,\delta_s+\delta_{-s},s\in A_0}$ forms a basis of $\CS^+(k)$. For simplicity we write $\Delta_s=\delta_s+\delta_{-s}$ for $s\in k^\times$ and $\Delta_0=2\delta_0$.  The pair $\pair{~}$ defined in Lemma \ref{lem3.5} gives a pair on $\omega_\psi^+$ which satisfies
 $$\pair{\omega_\psi^+(g)\phi, \omega_\psi^+({}^\iota\! g)\phi'}=\pair{\phi,\phi'},\forall g\in \SL_2(k), \phi,\phi'\in \CS^+(k).$$
We have the formula
\begin{equation} \label{eq3.5}
\pair{\Delta_s,\Delta_t}=2(\delta_{s,t}+\delta_{s,-t}).
\end{equation}

Recall that the space of $\omega_\psi^-$ consists of  $\phi\in \CS(k)$ such that $\phi(-x)=-\phi(x)$ for all $x\in k$. For $s\in k^\times$, write  $\Delta'_s=\delta_s-\delta_{-s}$. Then $\wpair{\Delta'_s,s\in A_0}$ forms a basis of $\omega_\psi^-$. The pair $\pair{~}$ defined in Lemma \ref{lem3.5} also gives a bilinear symmetric pair on $\omega_\psi^-$ which satisfies 
$$\pair{\omega_\psi^-(g)\phi,\omega_\psi^-({}^\iota g)\phi'}=\pair{\phi,\phi'}, \forall g\in \SL_2(k),\phi,\phi'\in \omega_\psi^-.$$
And we also have a formula
\begin{equation}\label{eq3.6}\pair{\Delta_s',\Delta_t'}=2(\delta_{s,t}-\delta_{s,-t}).\end{equation}

\subsection{A pair on \texorpdfstring{$\omega_{\psi,\mu}\otimes {}^\iota\omega_{\psi,\mu}$}{Lg}}\label{sec3.7}
 Let $\mu$ be a character of $E^1$ with $\mu^2\ne 1$. We then have an irreducible cuspidal representation $\omega_{\psi,\mu}$ of $\SL_2(k)$. We need an explicit pair $\pair{~}:\omega_{\psi,\mu}\times \omega_{\psi,\mu}\ra \BC$ such that 
$$\pair{\omega_{\psi,\mu}(g)v_1,\omega_{\psi,\mu}({}^\iota \! g)v_2}=\pair{v_1,v_2}, \forall g\in \SL_2(k),v_1,v_2\in \omega_{\psi,\mu}.$$
Recall that the space of $\omega_{\psi,\mu}$ consists of functions $f:E^\times \ra \BC$ such that $f(yx)=\mu^{-1}(y)f(x)$ for all $y\in E^1,x\in E^\times$. 
As in Section \ref{cuspidal representations}, 
for each $a\in k^\times$, we fix an element $x_a\in E^\times$ such that $\Nm(x_a)=a$, then a function $f\in \omega_{\psi,\mu}$ is uniquely determined by its values on the set $\wpair{x_a,a\in k^\times}$. For each $a\in k^\times$, we define a function $f_a\in \omega_{\psi,\mu}$ such that $f_a(x_b)=\delta_{a,b}$. Then $\wpair{f_a:a\in k^\times} $ forms a basis of $\omega_{\psi,\mu}$. 

For $\phi,\phi'\in \omega_{\psi,\mu}$, notice that the function $x\mapsto \phi(x)\phi'(x^{q})$ on $E^\times$ is $E^1$-invariant. We define a pair $$\pair{\phi,\phi'}=\sum_{x\in E^1\backslash E^\times}\phi(x)\phi'(x^{q})=\frac{1}{q+1}\sum_{x\in E}\phi(x)\phi'(x^q),$$
where we used $\phi(0)=0$ for $\phi\in \omega_{\psi,\mu}$.
Then, we have 
\begin{equation}\label{eq3.7}
\pair{f_a,f_b}=\mu^{-1}(x_a^{q-1})\delta_{a,b},\forall a,b\in k^\times.
\end{equation}
\begin{lem}
We have $\pair{\omega_{\psi,\mu}(g)\phi,\omega_{\psi,\mu}({}^\iota\! g)\phi'}=\pair{\phi,\phi'}, \forall \phi,\phi'\in \omega_{\psi,\mu},g\in \SL_2(k).$
\end{lem}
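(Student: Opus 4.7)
The plan is to verify the invariance on a set of generators of $\SL_2(k)$, namely the torus $\{\diag(a,a^{-1}):a\in k^\times\}$, the upper triangular unipotents $\{\bpm 1&b\\&1\epm:b\in k\}$, and the Weyl element $w=\bpm &1\\-1&\epm$. Using the definition ${}^\iota g=d_1 g d_1$ with $d_1=\diag(-1,1)$, one computes ${}^\iota\diag(a,a^{-1})=\diag(a,a^{-1})$, ${}^\iota\bpm 1&b\\&1\epm=\bpm 1&-b\\&1\epm$, and a short matrix calculation gives ${}^\iota w=\bpm &-1\\1&\epm=w^{-1}$. Since the pair $\pair{~,~}$ is bilinear, verifying invariance on these three families suffices.

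For the torus, the formula \eqref{eq2.2} gives $\omega_{\psi,\mu}(\diag(a,a^{-1}))\phi(\xi)=\phi(a\xi)$. Substituting into $\frac{1}{q+1}\sum_{x\in E}\phi(ax)\phi'((ax)^q)$ and using that $a\in k^\times$ (so $a^q=a$) together with the change of variables $y=ax$, one recovers $\pair{\phi,\phi'}$. For the unipotent $\bpm 1&b\\&1\epm$ (with ${}^\iota$-image $\bpm 1&-b\\&1\epm$), the actions multiply the two factors by $\psi(\Nm(x)b)$ and $\psi(-\Nm(x^q)b)$ respectively, and the identity $\Nm(x^q)=x^q\cdot x^{q^2}=x^{q+1}=\Nm(x)$ forces the scalars to cancel.

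The main step, and the only genuinely computational one, is the Weyl element. Here I would insert the two explicit Fourier-type formulas
\[
\omega_{\psi,\mu}(w)\phi(\xi)=-q^{-1}\sum_{y\in E}\psi(\Tr(y^q\xi))\phi(y),
\]
and the analogous expression for $\omega_{\psi,\mu}(w^{-1})\phi'=\omega_{\psi,\mu}(w)\omega_{\psi,\mu}(-I)\phi'$, which one checks evaluates to $-q^{-1}\sum_{z\in E}\psi(-\Tr(z^q\eta))\phi'(z)$ after the substitution $z\mapsto -z$. Plugging these into $\frac{1}{q+1}\sum_{\xi}(\cdot)(\xi)(\cdot)(\xi^q)$ yields a triple sum
\[
\frac{1}{(q+1)q^2}\sum_{y,z,\xi\in E}\psi\bigl(\Tr(y^q\xi)-\Tr(z\xi^q)\bigr)\phi(y)\phi'(z).
\]
Using the Frobenius-invariance of $\Tr$ to rewrite $\Tr(z\xi^q)=\Tr(z^q\xi)$, the exponent becomes $\Tr((y^q-z^q)\xi)$, and orthogonality of the additive character $\psi\circ\Tr$ on $E$ collapses the sum over $\xi$ to $q^2\delta_{y,z}$. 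What remains is exactly $\frac{1}{q+1}\sum_y\phi(y)\phi'(y^q)=\pair{\phi,\phi'}$.

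The only real obstacle is bookkeeping in the Weyl element step—tracking the signs and the interchange of $q$-th powers via the identities $\Tr(ab)=\Tr(a^qb^q)$ and $(-z)^q=-z^q$. Everything else is formulaic: the orthogonality relation $\sum_{\xi\in E}\psi(\Tr(\alpha\xi))=q^2\delta_{\alpha,0}$ does all the work once one has set up the double Fourier expansion.
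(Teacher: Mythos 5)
Your strategy is exactly the paper's: reduce to the generators $\diag(a,a^{-1})$, $\bx_\beta(b)$ (i.e.\ the upper unipotent) and $w$, dispose of the first two by a change of variables and the identities $a^q=a$, $\Nm(\xi^q)=\Nm(\xi)$, and handle $w$ by double Fourier expansion plus orthogonality of $\psi\circ\Tr$ on $E$. The only cosmetic difference is that the paper runs the $w$-computation on the basis $\{f_a\}$, using $E^1$-equivariance to turn the sums over $E$ into sums over $E^1$, whereas you keep general $\phi,\phi'$ and sum over all of $E$; both reduce to the same character-orthogonality argument.

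There is, however, a concrete bookkeeping error in your key step, and as written the displayed computation does not yield your stated conclusion. The second factor in the pairing is $\bigl(\omega_{\psi,\mu}(w^{-1})\phi'\bigr)(\xi^q)=-q^{-1}\sum_{z}\psi(-\Tr(z^q\xi^q))\phi'(z)$, so the exponent in the triple sum must be $\Tr(y^q\xi)-\Tr(z^q\xi^q)=\Tr(y^q\xi)-\Tr(z\xi)=\Tr((y^q-z)\xi)$, and orthogonality collapses the $\xi$-sum to $q^2\delta_{z,y^q}$; the surviving term is then $\frac{1}{q+1}\sum_y\phi(y)\phi'(y^q)=\pair{\phi,\phi'}$, as desired. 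You instead wrote the exponent as $\Tr(y^q\xi)-\Tr(z\xi^q)$ (dropping the Frobenius on $z$), which after your rewriting $\Tr(z\xi^q)=\Tr(z^q\xi)$ collapses to $\delta_{y,z}$ and leaves $\frac{1}{q+1}\sum_y\phi(y)\phi'(y)$ — not the pairing, and in fact this expression vanishes identically for $\mu^2\ne 1$, since $\phi(ux)\phi'(ux)=\mu^{-2}(u)\phi(x)\phi'(x)$ for $u\in E^1$. So your final line is correct but does not follow from the preceding display; the fix is simply to restore the $q$-th power on $z$, after which the collapse to $z=y^q$ is precisely what produces the $\phi'(y^q)$ in the pairing.
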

\begin{proof}
Note that $a^q=a$ for $a\in k^\times$, and $\Nm(\xi)=\Nm(\xi^q)=\xi^{q+1}$ for $\xi\in E$. By Eq.(\ref{eq2.2}) and a simple changing of variables, we get $$\pair{\omega_{\psi,\mu}\left(\begin{pmatrix}a&\\ &a^{-1} \end{pmatrix}\right)\phi,\omega_{\psi,\mu}\left(\begin{pmatrix}a&\\ &a^{-1}\end{pmatrix} \right)\phi'}=\pair{\phi,\phi'},$$
and 
$$\pair{\omega_{\psi,\mu}\left(\begin{pmatrix}1&b\\ &1 \end{pmatrix}\right)\phi,\omega_{\psi,\mu}\left(\begin{pmatrix}1&-b\\ &1 \end{pmatrix} \right)\phi}=\pair{\phi,\phi'},$$
for all $a\in k^\times,b\in k, \phi,\phi'\in \omega_{\psi,\mu}$. 

We now check
$$\pair{\omega_{\psi,\mu}(w)\phi,\omega_{\psi,\mu}({}^\iota \! w)\phi'}=\pair{\phi,\phi'}.$$
It suffices to show that 
$$\pair{\omega_{\psi,\mu}(w)f_a,\omega_{\psi,\mu}({}^\iota \! w)f_b}=\mu^{-1}(x_a^{q-1})\delta_{a,b},\forall a,b\in k^\times. $$
We have 
$$\omega_{\psi,\mu}(w)f_a(\xi)=-q^{-1}\sum_{x\in E^1}\psi(\Tr(x^qx_a^q\xi))\mu^{-1}(x),$$
and 
$$\omega_{\psi,\mu}({}^\iota\!w)f_b(\xi)=-q^{-1}\sum_{y\in E^1}\psi(-\Tr(y^qx_b^q\xi))\mu^{-1}(y).$$
Thus 
\begin{align*}
&\ \pair{\omega_{\psi,\mu}(w)f_a,\omega_{\psi,\mu}({}^\iota \! w)f_b}\\
=&\ q^{-2}\frac{1}{q+1}\sum_{\xi\in E}\sum_{x,y\in E^1}\psi(\Tr(x^qx_a^q\xi)-\Tr(y^qx_b^q\xi^q))\mu^{-1}(xy)\\
=&\ \frac{q^{-2}}{q+1}\sum_{\xi\in E}\sum_{x,y\in E^1}\psi(\Tr(x^{-1}y^{-1}x_a^q\xi)-\Tr(x_b^q\xi^q))\mu^{-1}(xy), \quad \xi\mapsto \xi y^{-1}\\
=&\ q^{-2}\sum_{\xi\in E}\sum_{x\in E^1}\psi(\Tr(x^{-1}x_a^q\xi)-\Tr(x_b^q\xi^q))\mu^{-1}(x), \quad x\mapsto xy^{-1}\\
=&\ q^{-2}\sum_{\xi\in E}\sum_{x\in E^1}\psi(\Tr(x^{-1}x_a^q\xi)-\Tr(x_b\xi))\mu^{-1}(x), \quad \textrm{since } \Tr(x_b^q\xi^q)=\Tr(x_b\xi)\\
=&\ q^{-2}\mu^{-1}(x_a^{q-1})\sum_{\xi\in E}\sum_{x\in E^1}\psi(\Tr(x^{-1}x_a\xi)-\Tr(x_b\xi))\mu^{-1}(x), \quad x\mapsto xx_a^{q-1}.
\end{align*}
If $a\ne b$, for any $x\in E^1$, the character $\xi\mapsto \psi(\Tr(x^{-1}x_a\xi)-\Tr(x_b\xi))$ on $E$ is non-trivial, and thus $ \pair{\omega_{\psi,\mu}(w)f_a,\omega_{\psi,\mu}({}^\iota \! w)f_b}=0$. If $a=b$, the character $\xi\mapsto \psi(\Tr(x^{-1}x_a\xi)-\Tr(x_b\xi))$ on $E$ is non-trivial unless $x=1$, and thus $\pair{\omega_{\psi,\mu}(w)f_a,\omega_{\psi,\mu}({}^\iota \! w)f_b}=\mu^{-1}(x_a^{q-1})$. This completes the proof. 
\end{proof}

\subsection{Transpose operators on \texorpdfstring{$\End(\pi\otimes \omega_\psi)$}{Lg}}\label{sec3.8}
Let $\pi=1, \St,\omega_{\psi}^{\pm},\omega_{\psi_\kappa}^{\pm},$ or $\omega_{\psi,\mu}$, and $\sigma_\pi=\pi\otimes\omega_\psi$. We have constructed pairs on $\pi\times{}^\iota\pi$ and on $\omega_\psi\times{}^\iota\omega_\psi$ in previous subsections. We then can define a pair on $\sigma_\pi$ by 
$$\pair{f_1\otimes\phi_1,f_2\otimes\phi_2}=\pair{f_1,f_2}\pair{\phi_1,\phi_2}, f_1,f_2\in \pi,\phi_1,\phi_2\in \omega_\psi.$$
From the construction, we have 
$$\pair{\sigma_\pi(j)\Phi,{}^\iota\sigma_\pi(j)\Phi'}=\pair{\Phi,\Phi'},\forall j\in J, \Phi,\Phi'\in \sigma_\pi.$$

For $A\in \End_{\mathbb{C}}(\sigma_\pi)$, we define ${}^t\!A\in \End_{\mathbb{C}}(\sigma_\pi)$ by 
$$\pair{{}^t\!A(\Phi),\Phi'}=\pair{\Phi,A(\Phi')},\forall \Phi,\Phi'\in \sigma_\pi.$$
\begin{lem}
The assignment $A\mapsto {}^t \! A$ is an anti-involution on $\End_{\mathbb{C}}(\sigma_\pi)$ and satisfies
$${}^t(\sigma_\pi(j))=\sigma_\pi({}^\tau \!j),\forall j\in J.$$
\end{lem}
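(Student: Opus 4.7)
The plan is to verify three things in order: non-degeneracy of the pair $\pair{\cdot,\cdot}$ on $\sigma_\pi$, the fact that $A\mapsto {}^t\!A$ is an anti-involution on $\End_{\mathbb{C}}(\sigma_\pi)$, and finally the identity ${}^t(\sigma_\pi(j))=\sigma_\pi({}^\tau\!j)$. Only the first step requires any real content; the other two are then formal consequences.

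First I would check non-degeneracy of the pair on $\sigma_\pi$. Because the pair is the tensor product of a pair on $\pi\times{}^\iota\pi$ with a pair on $\omega_\psi\times{}^\iota\omega_\psi$, it is enough to check that each factor is non-degenerate. For $\omega_\psi$ this is immediate from $\pair{\delta_s,\delta_t}=\delta_{s,t}$ in \eqref{eq3.1}, which says the Gram matrix in the basis $\{\delta_s\}$ is the identity; the same analysis using \eqref{eq3.5}, \eqref{eq3.6}, \eqref{eq3.7} (and a direct check for $\St$ via \eqref{eq3.4} on a basis extracted from $\{F_r\}$) handles $\omega_\psi^\pm$, $\omega_{\psi,\mu}$, and $\St$ individually. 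Alternatively, since $\sigma_\pi$ is irreducible as a representation of $J$, any nonzero $J$-invariant bilinear form (in the twisted sense $\pair{\sigma_\pi(j)\Phi,{}^\iota\sigma_\pi(j)\Phi'}=\pair{\Phi,\Phi'}$) yields a nonzero $J$-equivariant map $\sigma_\pi\to({}^\iota\sigma_\pi)^{*}$, which must be an isomorphism by Schur's lemma; this gives non-degeneracy uniformly once one checks that the pair is not identically zero, which is manifest from the above Gram matrix computations. This uniform Schur-theoretic argument is the step I expect to be the only mildly subtle point.

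Once non-degeneracy is established, for every $A\in\End_{\mathbb{C}}(\sigma_\pi)$ the linear functional $\Phi\mapsto\pair{\Phi,A(\Phi')}$ determines a unique element ${}^t\!A(\Phi')\in\sigma_\pi$, so $A\mapsto{}^t\!A$ is a well-defined endomorphism of $\End_{\mathbb{C}}(\sigma_\pi)$. That it is an anti-involution is then purely formal: using the symmetry of $\pair{\cdot,\cdot}$ we get $\pair{{}^t({}^t\!A)\Phi,\Phi'}=\pair{\Phi,{}^t\!A(\Phi')}=\pair{{}^t\!A(\Phi'),\Phi}=\pair{\Phi',A(\Phi)}=\pair{A\Phi,\Phi'}$, so ${}^t({}^t\!A)=A$ by non-degeneracy, and
\[
\pair{{}^t(AB)\Phi,\Phi'}=\pair{\Phi,AB(\Phi')}=\pair{{}^t\!A(\Phi),B(\Phi')}=\pair{{}^t\!B\cdot{}^t\!A(\Phi),\Phi'}
\]
gives ${}^t(AB)={}^t\!B\cdot{}^t\!A$.

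For the compatibility with $\sigma_\pi$, I would use the twisted invariance of the pair directly. Since ${}^\tau\!j=({}^\iota\!j)^{-1}$, setting $\Psi=\sigma_\pi({}^\tau\!j)\Phi'$ in $\pair{\sigma_\pi(j)\Phi,\sigma_\pi({}^\iota\!j)\Psi}=\pair{\Phi,\Psi}$ yields
\[
\pair{\sigma_\pi(j)\Phi,\Phi'}=\pair{\Phi,\sigma_\pi({}^\tau\!j)\Phi'}.
\]
Using the symmetry of the pair on both sides, this says $\pair{{}^t\!\sigma_\pi(j)\Phi',\Phi}=\pair{\sigma_\pi({}^\tau\!j)\Phi',\Phi}$ for all $\Phi,\Phi'$, whence non-degeneracy forces ${}^t(\sigma_\pi(j))=\sigma_\pi({}^\tau\!j)$, completing the proof.
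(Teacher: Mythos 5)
Your proposal is correct and follows essentially the same route as the paper: the paper likewise treats the anti-involution property as a formal consequence of the symmetry (and implicit non-degeneracy) of the pairing, and derives ${}^t(\sigma_\pi(j))=\sigma_\pi({}^\tau\!j)$ from the twisted invariance $\pair{\sigma_\pi(j)\Phi,{}^\iota\sigma_\pi(j)\Phi'}=\pair{\Phi,\Phi'}$ exactly as you do. The only difference is that you spell out the non-degeneracy check (via the Gram matrices or Schur's lemma on the irreducible $J$-representation $\sigma_\pi$), which the paper dismisses as routine.
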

\begin{proof}
Note that the pair $\pair{~,~}$ on $\sigma_\pi$ is in fact symmetric from the construction, and it is routine to check that $A\mapsto {}^t\!A$ is an anti-involution on $\End_{\mathbb{C}}(\sigma_\pi)$. 

For $\Phi,\Phi'\in \sigma_\pi,j\in J$, we have that 
$$\pair{{}^t(\sigma_\pi(j))\Phi,\Phi'}=\pair{\Phi,\sigma_\pi(j)\Phi'}=\pair{\sigma_\pi({}^\tau\!j)\Phi,\Phi'}.$$
Thus we get ${}^t(\sigma_\pi(j))=\sigma_\pi({}^\tau \!j). $
\end{proof}

Let $\sigma_\pi|_{\SL_2(k)}=\bigoplus_i V_i$ be the decomposition given in Proposition \ref{prop2.1}, where $V_i$ is an irreducible representation of $\SL_2(k)$. Note that each $V_i$ occurred at most once in the decomposition. Let $\id_{V_i}\in \End(\sigma_\pi)$ be the element such that $\id_{V_i}|_{V_j}=0$ if $j\ne i$, and $\id_{V_i}|_{V_i}$ is the identity.
\begin{lem}\label{lem3.9}
We have ${}^t\id_{V_i}=\id_{V_i}$.
\end{lem}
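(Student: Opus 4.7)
The plan is to reduce the identity ${}^t\id_{V_i} = \id_{V_i}$ to the orthogonality of distinct components $V_j$ under the pair $\langle\cdot,\cdot\rangle$, and then invoke the defining relation of the transpose.

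First I would set up two preliminaries. Since $\pi$ is none of the representations $I(\chi)$ with $\chi^2 \neq 1$, Corollary \ref{cor2.2} guarantees that $\sigma_\pi|_{\SL_2(k)}$ is multiplicity-free, so the $V_j$ appearing in the decomposition of Proposition \ref{prop2.1} are pairwise non-isomorphic irreducible representations of $\SL_2(k)$. Second, the explicit basis-pair formulas in Sections \ref{sec3.4}--\ref{sec3.7} (in particular \eqref{eq3.1}, \eqref{eq3.3}--\eqref{eq3.6}, \eqref{eq3.7}) show that each factor pair is non-degenerate, so the tensor product pair on $\sigma_\pi$ is non-degenerate and ${}^t A$ is thereby well-defined for every $A \in \End(\sigma_\pi)$.

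The key step is to prove $V_i \perp V_j$ whenever $i \neq j$. Restricting to $g \in \SL_2(k) \subset J$, the invariance $\langle g\Phi, {}^\iota g\Phi'\rangle = \langle\Phi,\Phi'\rangle$ established in the previous subsections says precisely that the restriction of $\langle\cdot,\cdot\rangle$ to $V_i \times V_j$ is an $\SL_2(k)$-invariant bilinear form on $V_i \otimes {}^\iota V_j$. Such a form corresponds to an element of $\Hom_{\SL_2(k)}(V_i, \widetilde{{}^\iota V_j})$. Applying Lemma \ref{lem2.5} to the irreducible representation ${}^\iota V_j$ gives $\widetilde{{}^\iota V_j} \cong {}^\iota({}^\iota V_j) = V_j$, so this space is $\Hom_{\SL_2(k)}(V_i, V_j)$, which vanishes by Schur's Lemma since $V_i \not\cong V_j$.

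To finish, for arbitrary $\Phi = \sum_j \Phi_j$ and $\Phi' = \sum_j \Phi'_j$ with $\Phi_j, \Phi'_j \in V_j$, the orthogonality gives
$$\langle \id_{V_i}\Phi, \Phi'\rangle = \langle \Phi_i, \Phi'_i\rangle = \langle \Phi, \id_{V_i}\Phi'\rangle,$$
and by the defining property of the transpose this shows ${}^t\id_{V_i} = \id_{V_i}$. The main obstacle is the orthogonality step: it rests essentially on the MVW-type identification $\widetilde{W} \cong {}^\iota W$ from Lemma \ref{lem2.5} combined with the ${}^\iota$-twisted invariance of the pair verified case-by-case in Sections \ref{sec3.4}--\ref{sec3.7}; once these are granted, the rest of the proof is formal.
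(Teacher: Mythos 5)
Your proof is correct and follows essentially the same route as the paper: both arguments reduce the claim to the orthogonality $\langle V_i, V_j\rangle = 0$ for $i\neq j$, obtained by noting that a nonzero ${}^\iota$-twisted invariant pairing would force $\widetilde{V_i}\cong{}^\iota V_j$, contradicting Lemma \ref{lem2.5} and the multiplicity-freeness from Proposition \ref{prop2.1}. Your version is marginally more complete in that it makes the non-degeneracy of the pair explicit and verifies the identity on $V_i$ itself rather than only showing ${}^t\id_{V_i}$ kills the other summands.
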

\begin{proof}
Suppose that $j\ne i$, we need to show that ${}^t \id_{V_i}|_{V_j}=0$. Suppose that this is false, then there exists $v_j\in V_j$ such that $ {}^t\id_{V_i}(v_j)\ne 0$. To get a contradiction, it suffices to show that $\pair{{}^t\id_{V_i}(v_j),v}=0$ for all $v\in \sigma_\pi$. If $v\notin V_i$, we have $\pair{{}^t\id_{V_i}(v_j),v}=\pair{v_j,\id_{V_i}v}=0 $ since  $\id_{V_i}v=0$. If $v\in V_i$, we have $\pair{{}^t\id_{V_i}(v_j),v}=\pair{v_j,v}$. If this is not zero, we then get a non-trivial pair between $V_i$ and $V_j$ such that $\pair{\sigma_\pi(g)v,\sigma_\pi({}^\iota\!g)v'}\ne 0$, which would imply that $\wt V_i={}^\iota V_j$. But we know that $\wt V_i={}^\iota V_i$ by Lemma \ref{lem2.5} and ${}^\iota V_j $ is not isomorphic to ${}^\iota V_i$ by assumption. This proves the lemma. 
\end{proof}

\section{Certain multiplicity one theorems for \texorpdfstring{$\Sp_4(k)$}{Lg}}\label{sec4}

 Our main theorem for $\Sp_4(k)$ is the following

\begin{thm}\label{thm4.1}
The representation $\Ind_J^{\Sp_4(k)}(\pi\otimes\omega_\psi)$ of $\Sp_4(k)$ is multiplicity free if $\pi=1,\St,\omega_{\psi}^{\pm},\omega_{\psi_\kappa}^{\pm}$, $\omega_{\psi,\mu}$, where $\mu$ is a character of $E^1$ with $\mu^2\ne 1$.
\end{thm}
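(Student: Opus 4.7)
The plan is to apply the Gelfand--Kazhdan machinery developed in Section \ref{sec3}. By Corollary \ref{cor3.4}, it suffices to exhibit a system of representatives $\{g_i\}$ for the double cosets $J \backslash \Sp_4(k) / J$ and verify that $({}^\tau K)(g_i) = K(g_i)$ for every $K \in \CA(\Sp_4(k), J, \sigma_\pi)$ and every $i$, where ${}^\tau$ is the anti-involution on $\Sp_4(k)$ and ${}^t$ is the transpose on $\End_{\BC}(\sigma_\pi)$ from Section \ref{sec3.8}.

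First, I would work out an explicit double coset decomposition $\Sp_4(k) = \bigsqcup_i J g_i J$, choosing the representatives so that ${}^\tau g_i = g_i$ whenever possible. This reduces the required identity to ${}^t K(g_i) = K(g_i)$. Since $J$ sits inside the Siegel parabolic $P = MU$, the natural candidates for $g_i$ are the identity, torus elements of the form $\diag(a,1,1,a^{-1})$, and Weyl representatives such as $s_\alpha$ and $s_\beta s_\alpha s_\beta$, possibly twisted by such torus elements; one expects a small list (on the order of four to six representatives).

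Next, for each representative $g_i$, the value $K(g_i) \in \End_{\BC}(\sigma_\pi)$ must satisfy $\sigma_\pi(h_1) K(g_i) = K(g_i) \sigma_\pi(h_2)$ for all $(h_1, h_2) \in J \times J$ with $h_1 g_i = g_i h_2$, so $K(g_i)$ is a $(J \cap g_i J g_i^{-1})$-intertwiner between two $J$-twists of $\sigma_\pi$. For the identity coset, since $\sigma_\pi = \pi \otimes \omega_\psi$ is irreducible as a $J$-representation, Schur's lemma gives $K(1) \in \BC \cdot \id_{\sigma_\pi}$, which is ${}^t$-invariant. For the remaining cosets, the crucial input is Proposition \ref{prop2.1}: under the hypothesis that $\pi \ne I(\chi)$ with $\chi^2 \neq 1$, the decomposition of $\sigma_\pi|_{\SL_2(k)}$ is multiplicity free. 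This allows one to decompose each intertwining space along the $\SL_2(k)$-isotypic projectors $\id_{V_i}$, which are ${}^t$-invariant by Lemma \ref{lem3.9}.

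The main obstacle will be the double cosets for which $g_i$ involves a long Weyl element, since then the stabilizer $J \cap g_i J g_i^{-1}$ is small and the associated intertwining space may have dimension greater than one. In such cases, I would produce an explicit basis of the intertwining space, each basis vector obtained by composing an $\SL_2(k)$-isotypic projector with a canonical $J$-equivariant map built from the symmetric pairings constructed in Sections \ref{sec3.4}--\ref{sec3.7}; the ${}^t$-invariance can then be read off from the formulas (\ref{eq3.1}) and (\ref{eq3.3})--(\ref{eq3.7}). The case $\pi = I(\chi)$ with $\chi^2 \neq 1$ must be excluded at precisely this step, because Proposition \ref{prop2.1} produces the multiplicity-$2$ summand $2 I(\epsilon \chi)$ in $I(\chi) \otimes \omega_\psi$, and on that two-dimensional isotypic piece the transpose need not act as the identity, giving the genuine failure of multiplicity freeness recorded in Remark \ref{rmk4.2}.
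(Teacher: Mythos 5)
Your proposal follows essentially the same route as the paper: Gelfand--Kazhdan via Corollary \ref{cor3.4} with the MVW-type anti-involution, the double coset representatives $t(a)$, $\eta(a)=\bigl(\begin{smallmatrix}&a&&\\ a^{-1}&&&\\ &&&a\\ &&a^{-1}&\end{smallmatrix}\bigr)$, $\xi(a)$ chosen to be $\tau$-fixed, Schur's lemma together with the ${}^t$-invariant isotypic projectors $\id_{V_i}$ (Lemma \ref{lem3.9}, resting on Proposition \ref{prop2.1}) for the cosets whose representatives centralize $\SL_2(k)$, and the explicit symmetric pairings of Sections \ref{sec3.4}--\ref{sec3.7} for the rest. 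One calibration worth noting: the genuinely laborious coset is not the long Weyl element $\xi(a)$ (which centralizes $\SL_2(k)$ and is dispatched by the projector argument) but the intermediate $\eta(a)$, where the paper does not use $\SL_2(k)$-isotypic projectors at all; instead it uses the commutation relations of $\eta(a)$ with $\bx_\beta$ and the Heisenberg subgroup to force $K(\eta(a))$ to be diagonal in the explicit bases $\delta_s$, $F_r\otimes\delta_s$, $\Delta_r\otimes\delta_s$, $f_a\otimes\delta_s$, and only then invokes the pairing formulas (\ref{eq3.1}), (\ref{eq3.4})--(\ref{eq3.7}) and the skew-symmetry ${}^\tau K=-K$ to kill the surviving diagonal coefficient.
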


\begin{rmk}\label{rmk4.2}
\rm{
Before proving Theorem \ref{thm4.1}, we show that for $q$ large, the representation $\Ind_J^{\Sp_4}(I(\chi)\otimes \omega_\psi)$ is not multiplicity free. In the following, we write $\Sp_4(k)$ as $G$ for simplicity. Recall that $P$ is the Kilingen parabolic subgroup with Levi $M\cong \GL_1(k)\times \SL_2(k)$. Given characters $\chi_1,\chi_2$ of $k^\times$, view $\chi_1\otimes I(\chi_2)$ as a representation of $M\cong \GL_1(k)\times \SL_2(k)$ and consider the parabolic induction $\Ind_P^G(\chi_1\otimes I(\chi_2))$. We claim that, if $\chi_2=\epsilon \chi$, then $$\Hom_G(\Ind_J^G(I(\chi)\otimes \omega_\psi),\Ind_P^G(\chi_1\otimes I(\chi_2)))\ge 2.$$
In fact, by Frobenius reciprocity law,  
\begin{align*}
    \Hom_G(\Ind_J^G(I(\chi)\otimes \omega_\psi),\Ind_P^G(\chi_1\otimes I(\chi_2)))&=\Hom_J(I(\chi)\otimes \omega_\psi,\Ind_P^G(\chi_1\otimes I(\chi_2))|_J).
\end{align*}
By Mackey's Theorem (see \cite[Proposition 22, p.58]{Se}), 
$$\Ind_P^G(\chi_1\otimes I(\chi_2))|_J=\bigoplus_{s\in J\backslash G/P}\Ind_{P_s}^J((\chi_1\otimes I(\chi_s))^s), $$
where $P_s=sPs^{-1}\cap J$, and for a representation $\rho$ of $P$, the representation $\rho^s$ of $P_s$ is defined by $\rho^s(h)=\rho(s^{-1}hs)$. Considering the element $s=s_\alpha s_\beta s_\alpha \in J\backslash G/P$, we have that 
\begin{align*}
    \Hom_G(\Ind_J^G(I(\chi)\otimes \omega_\psi),\Ind_P^G(\chi_1\otimes I(\chi_2)))&\supset\Hom_J(I(\chi)\otimes \omega_\psi,\Ind_{P_s}^J(\chi_1\otimes I(\chi_2))^s)\\
    &=\Hom_{P_s}(I(\chi)\otimes \omega_\psi|_{P_s}, (\chi_1\otimes I(\chi_2))^s).
\end{align*}
We have $P_s\cong \SL_2(k)\incl M$, and $(\chi_1\otimes I(\chi_2))^s =I(\chi_2)$. Thus 
$$\dim \Hom_G(\Ind_J^G(I(\chi)\otimes \omega_\psi),\Ind_P^G(\chi_1\otimes I(\chi_2)))\ge \dim\Hom_{\SL_2(k)}(I(\chi)\otimes \omega_\psi,I(\chi_2)). $$
By Proposition $\ref{prop2.1}$, if $\chi_2=\epsilon \chi$, then
$$ \dim\Hom_{\SL_2(k)}(I(\chi)\otimes \omega_\psi,I(\epsilon \chi))=2.$$
Thus $$\dim \Hom_G(\Ind_J^G(I(\chi)\otimes \omega_\psi),\Ind_P^G(\chi_1\otimes I(\epsilon\chi)))\ge 2.$$

By Mackey's irreducibility criterion (see \cite[p.59]{Se}), if $\chi_1$ and $\epsilon\chi$ are in ``general position", the induced representation $\Ind_P^G(\chi_1\otimes I(\epsilon \chi))$ is irreducible. Here two characters $\chi_1,\chi_2$ are said to be in general position, if $(\chi_1\otimes \chi_2)\ne (\chi_1\otimes\chi_2)^w$ for all $w\in W(\Sp_4)-\wpair{1}$, where $W(\Sp_4)$ denotes the Weyl group of $\Sp_4$ and $(\chi_1\otimes \chi_2)$ is viewed as a character of the maximal torus of $G$ via
$$(\chi_1\otimes\chi_2)(\diag(a,b,b^{-1},a^{-1}))=\chi_1(a)\chi_2(b),$$
and $(\chi_1\otimes\chi_2)^w(t)=(\chi_1\otimes \chi_2)(w.t)$ for $t$ in the maximal torus. In fact, it is not hard to check that $\chi_1,\chi_2$ are in general position if and only if $\chi_1^2\ne 1,\chi_2^2\ne 1,\chi_1\ne \chi_2^{\pm1}$. For $q$ large (in fact, $q\ge 7$ will suffice), one could find $\chi_1$ such that $\chi_1,\epsilon\chi$ are in general position, so that $\Ind_P^G(\chi_1\otimes I(\epsilon \chi))$ is irreducible. Hence $\Ind_J^G(I(\chi)\otimes\omega_\psi)$ is not multiplicity free for $q$ large. \qed}
\end{rmk}

Before we start the proof of Theorem \ref{thm4.1}, we also need to give the double coset decomposition $J\backslash \Sp_4(k)/J$.
Denote $t(a)=\diag(a,1,1,a^{-1})$ for $a\in k^\times$. 
 From the decomposition 
$$\Sp_4(k)=Ps_\alpha s_\beta s_\alpha P\cup Ps_\alpha P\cup P,$$
and $$P=\cup _{a\in k^\times}t(a)J=\cup_{a\in F^\times}Jt(a),$$
we can get a set of representatives of the double coset $J\backslash \Sp_4(k)/J$ given by 
$$t(a), \eta(a):= \begin{pmatrix}&a&&\\ a^{-1} &&&\\ &&&a\\ &&a^{-1}& \end{pmatrix}, \xi(a):= \begin{pmatrix}&&a\\ &I_2&\\ -a^{-1}&&\end{pmatrix}, a\in k^\times.$$

\noindent\textbf{Proof of Theorem $\ref{thm4.1}$.}
If $\pi=1$, the multiplicity-freeness of $\Ind_J^{\Sp_4(k)}(\omega_\psi)$ could be deduced from the main result of \cite{T}. In the following, for completeness, we still give details of the proof in this case.

 Denote $\sigma_\pi=\pi\otimes\omega_\psi$  for $\pi$ listed in Theorem \ref{thm4.1}. In Section \ref{sec3.8}, we have constructed an anti-involution ${}^t$ on $\End(\sigma_\pi)$ such that ${}^t(\sigma_\pi(j))=\sigma_\pi({}^\tau\! j)$.
We can define an anti-involution ${}^\tau$ on $\CA(\Sp_4(k),J,\sigma_\pi)$ by 
$$({}^\tau\! K)(g)={}^t\! (K({}^\tau \! g)), K\in \CA(\Sp_4(k),J,\sigma_\pi), g\in \Sp_4(k).$$

By Corollary \ref{cor3.4}, it suffices to show that $({}^\tau\! K)(g)=K(g)$ for $g=t(a),\eta(a),\xi(a)$ for all $a\in k^\times$ and all $K\in \CA(\Sp_4(k),J,\sigma_\pi)$. Replacing $K$ by $K-{}^\tau \! K$, it suffices to show that for $K\in \CA(G,H,\sigma_\pi)$ with ${}^\tau \! K=-K$, $K(g)=0$ for $g=t(a),\eta(a),\xi(a),\forall a\in k^\times$. We shall assume ${}^\tau \! K=-K$ and 
show that $K(g)=0$ for $g=t(a),\eta(a),\xi(a),a\in k^\times$, case by case.

Step (1), we show that $K(t(a))=0$ for all $a\in k^\times$. We first consider $t(a), a\ne \pm 1$. 
Since 
$t(a)[0,0,z]=[0,0,a^2z]t(a),$  
from the definition of $\CA(\Sp_4(k), J,\sigma_\pi)$,  $\psi(z)K(t(a))=\psi(a^2z)K(t(a))$. Since $\psi$ is non-trivial and $a^2\ne 1$, one can choose $z\in k$ such that $\psi(z)\ne \psi(a^2z).$ Hence, $K(t(a))=0.$ 
Next, we show that $K(t(a))=0$ if $a^2=1$. Since $t(a)g=gt(a),\forall g\in \SL_2(k)$,  $$K(t(a))\sigma_\pi(g)=\sigma_\pi(g)K(t(a)),\forall g\in \SL_2(k).$$
This implies that $K(t(a))\in \End_{\SL_2(k)}(\sigma_\pi)$. As a representation of $\SL_2(k)$, by Proposition \ref{prop2.1}, we can write
$$\sigma_\pi=\bigoplus V_i,$$
where $V_i$ is an irreducible representation of $\SL_2(k)$ and $i$ runs in certain index set. By Schur's Lemma, we can write 
$$K(t(a))=\sum_i C_i \id_{V_i},$$
with $C_i\in \BC$ depending on $a$. On the other hand, we have ${}^\tau(t(a))=t(a)$ if $a^2=1$. Thus $({}^\tau \! K)(t(a))={}^t(K(t(a)))$ by definition. By Lemma \ref{lem3.9}, the idempodents $\id_{V_i}$ are invariant under transpose, which implies that ${}^t(K(t(a)))=K(t(a))$.  Then the assumption ${}^\tau \! K=-K$ implies that $K(t(a))=0.$ This completes Step (1).

Step (2), we show that $K(\eta(a))=0$ for all $a\in k^\times$. We first record the following relations
\begin{align}
    \eta(a)\bx_\beta(y)&=[0,0,a^2y]\eta(a), \label{eq4.1}\\
    \eta(a)[0,0,a^2y]&=\bx_\beta(y)\eta(a), \label{eq4.2}\\
    \eta(a)[0,y,0]&=[0,y,0]\eta(a).\label{eq4.3}
\end{align}

We now consider the cases $\pi=1, \St, \omega_\psi^{\pm},\omega_{\psi_\kappa}^{\pm},\omega_{\psi,\mu}$, respectively. 

Case (2.1), $\pi=1$. Recall that $\sigma_\pi=\omega_\psi$ has a basis $\wpair{\delta_s,s\in k}.$
Applying formulas (\ref{eq2.1}), we have that
\begin{equation}\label{eq4.4}\omega_\psi(\bx_\beta(y))\delta_s=\psi(ys^2)\delta_s,\quad \omega_\psi([0,y,0])\delta_s=\psi(2sy)\delta_s, \forall y\in k.\end{equation}
Assume that $K(\eta(a))\delta_s=\sum_{t\in k}C_s(t)\delta_t$, for $C_s(t)\in \BC$.
From the relation (\ref{eq4.3}), 
$$K(\eta(a))\sigma_\pi[0,y,0]=\sigma_\pi[0,y,0]K(\eta(a)).$$
Applying the above formula to $\delta_s$ we have that
$$\psi(2sy)\sum_t C_s(t)\delta_t=\sum_t \psi(2ty)C_s(t)\delta_t.$$
Hence, $C_s(t)=0$ if $t\ne s$, and $K(\eta(a))\delta_s=C_s(s)\delta_s$.

We now show that $C_s(s)=0$ using the assumption that $K+{}^\tau\!K=0$. Since ${}^\tau \! \eta(a)=\eta(a)$, ${}^\tau \! K(\eta(a))={}^t (K(\eta(a)))$. Thus we have ${}^t \! K(\eta(a))+K(\eta(a))=0$, which then implies that 
$$\pair{{}^t (K(\eta(a))\delta_s,\delta_s}+\pair{K(\eta(a))\delta_s, \delta_s}=0, \forall s\in k. $$
From the definition of ${}^t(K(\eta(a)))$, the above condition implies that $\pair{K(\eta(a))\delta_s,\delta_s}=0.$ From Eq.(\ref{eq3.1}), we then get $C_s(s)=0$. This shows that $K(\eta(a))=0$, for all $a\in k^\times$. 

Case (2.2), $\pi=\St$. Recall that $\sigma_\pi=\St\otimes\omega_\psi$ has a basis $\wpair{F_r\otimes \delta_s,r,s\in k},$ see Section \ref{sec3.5}. Note that 
\begin{equation}\label{eq4.5}\St(\bx_\beta(b))F_r=F_{r-b}, \quad \St([x,y,z])F_r=F_r,\end{equation}
where the action of $\St$ is given by right translation.

 From the relation Eq.(\ref{eq4.1}), we get $K(\eta(a))\sigma(\bx_\beta(r))=\psi(a^2y)K(\eta(a))$. Applying this formula to $F_r\otimes\delta_s$ and using Eq.(\ref{eq4.4}) (\ref{eq4.5}), we have
$$\psi(ys^2)K(\eta(a))F_{r-y}\otimes \delta_s=\psi(a^2y)K(\eta(a))F_r\otimes \delta_s.$$
In particular, $K(\eta(a))F_r\otimes \delta_s=\psi((s^2-a^2)r)K(\eta(a))F_0\otimes \delta_s.$ Thus to show that $K(\eta(a))=0$ it suffices to show that $K(\eta(a))F_0\otimes\delta_s=0,\forall s\in k.$

Assume that $K(\eta(a))F_0\otimes \delta_s=\sum_{b,t\in k}C_s(b,t)F_b\otimes \delta_t$. Here $C_s(b,t)\in \BC$ might also depend on $a$. From the relation \eqref{eq4.2}, we have $\psi(a^2y)K(\eta(a))=\sigma_\pi(\bx_\beta(y))K(\eta(a))$. Applying this to $F_0\otimes \delta_s$, we get that
\begin{align*}
\sum_{b,t}\psi(a^2y)C_s(b,t)F_b\otimes \delta_t&=\sum_{b,t}C_s(b,t)\psi(yt^2)F_{b-y}\otimes \delta_t\\
&=\sum_{b,t}C_s(b+y,t)\psi(yt^2)F_b\otimes \delta_t.
\end{align*}
Hence, 
\begin{equation}\label{eq4.6}
C_s(b+y,t)=\psi((a^2-t^2)y)C_s(b,t), \forall  b,t,y\in k.
\end{equation}
On the other hand, using the relation (\ref{eq4.3}), we have $K(\eta(a))\sigma([0,y,0])=\sigma([0,y,0])K(\eta(a))$. Applying this to $F_0\otimes \delta_s$ and using Eqs.(\ref{eq4.4}), (\ref{eq4.5}), we obtain that
\begin{align*}
\sum_{b,t\in k}\psi(2sy)C_s(b,t)F_b\otimes \delta_t=\sum_{b,t\in k}C_s(b,t)\psi(2ty)F_b\otimes \delta_t.
\end{align*}
Thus we get $\psi(2sy)C_s(b,t)=\psi(2ty)C_s(b,t)$ for all $b,t,y\in k$. If $s\ne t$, one can choose $y$ such that $\psi(2sy)\ne \psi(2ty)$, hence,  $C_s(b,t)=0$. Write $D_s=C_s(0,s)$, then  $C_s(b,s)=\psi((a^2-s^2)b)D_s$ by Eq.(\ref{eq4.6}). We get that 
\begin{align*}
K(\eta(a))F_0\otimes \delta_s&=\sum_b C_s(b,s)F_b\otimes \delta_s=\sum_b \psi((a^2-s^2)b)D_s F_b\otimes \delta_s.
\end{align*}
Since ${}^\tau\! \eta(a)=\eta(a)$, and ${}^\tau \! K=-K$, we get $\pair{K(\eta(a))F_0\otimes\delta_s,F_0\otimes\delta_s}=0,$
where the pair $\pair{~,~}$ is defined in Section \ref{sec3.8}. By Eqs.(\ref{eq3.1}) and (\ref{eq3.4}), the above equation is equivalent to 
$$D_s+(q-1)\sum_{b\in k}\psi((a^2-s^2)b)D_s=0.$$
Note that $ \sum_{b\in k}\psi((a^2-s^2)b)$ is either $q$ or $0$ depending on $a^2=s^2$ or not. We then have $D_s=0.$ This shows that $K(\eta(a))=0$ when $\pi=\St$.

Case (2.3), $\pi=\omega_{\psi_u}^{\pm 1}$ for $u=1,\kappa$. In these cases, the proofs are similar, and we only give details for $\pi=\omega_\psi^+$. Recall that $\sigma_\pi=\omega_{\psi}^+\otimes\omega_\psi$ has a basis $\Delta_r\otimes\delta_s$, where $r$ runs over $A_0\cup\wpair{0}$ and $s\in k$. Recall that $A_0$ is a set of representatives of $k^\times/\wpair{\pm 1}$ (see Section \ref{sec3.6}). 

We first record the following formulas
\begin{equation}\label{eq4.7}
\sigma([0,y,z])\Delta_r\otimes \delta_s=\psi(2sy)\Delta_r\otimes \delta_s, \quad
\sigma(\bx_\beta(b))\Delta_r\otimes \delta_s=\psi(b(r^2+s^2))\Delta_r\otimes \delta_s.
\end{equation}
From the relation $\eta(a)\bx_\beta(y)=[0,0,a^2y]\eta(a)$, we get $K(\eta(a))\sigma(\bx_\beta(y))=\psi(a^2y)K(\eta(a))$. Applying this to $\Delta_r\otimes\delta_s$, we get that
$$\psi((r^2+s^2)y)K(\eta(a))\Delta_r\otimes \delta_s=\psi(a^2y)K(\eta(a))\Delta_r\otimes \delta_s.$$
Since $y$ is arbitrary, we get that 
\begin{equation}\label{eq4.8}K(\eta(a))\Delta_r\otimes \delta_s=0 \textrm{ if } r^2+s^2\ne a^2\end{equation}

If $r^2+s^2=a^2$, assume that $K(\eta(a))\Delta_r\otimes \delta_s=\sum_{b,t}C_{r,s}(b,t)\Delta_b\otimes \delta_t$, where $t$ runs over $k$, and $b$ runs over $\wpair{0}\cup A_0$. 
 From the relations \eqref{eq4.2} and \eqref{eq4.7}, we can get $ \psi(a^2y)K(\eta(a))\Delta_r\otimes \delta_s=\sigma(\bx_\beta(y))K(\eta(a))\Delta_r\otimes \delta_s$, or 
$$\psi(a^2y)\sum_{b,t}C_{r,s}(b,t)\Delta_b\otimes \delta_t=\sum_{b,t}C_{r,s}(b,t)\psi(y(b^2+t^2))\Delta_b\otimes \delta_t. $$
Since $y$ is arbitrary, we can get 
\begin{equation}\label{eq4.9} C_{r,s}(b,t)=0, \textrm{ if } b^2+t^2\ne a^2.\end{equation}
From the relation \eqref{eq4.3}, we have $K(\eta(a))\sigma([0,y,0])=\sigma([0,y,0])K(\eta(a))$. By Eq.(\ref{eq4.7}), we have 
$$\psi(2sy)\sum_{b,t}C_{r,s}(b,t)\Delta_b\otimes \delta_t=\sum_{b,t}C_{r,s}(b,t)\psi(2ty)\Delta_b\otimes \delta_t.$$
Hence, $C_{r,s}(b,t)=0$ if $t\ne s$, and \begin{equation}\label{eq3.9}K(\eta(a))\Delta_r\otimes \delta_s=\sum_{b}C_{r,s}(b,s)\Delta_b\otimes \delta_s.\end{equation} Let $b_0\in k$ be such that $b_0^2=a^2-s^2$. If $b^2\ne b_0^2$, then $C_{r,s}(b,s)=0$ by Eq.(\ref{eq4.9}). Thus we get $K(\eta(a))\Delta_r\otimes \delta_s=C_{r,s}(b_0,s)\Delta_{b_0}\otimes \delta_s$. On the other hand, if $r\ne \pm b_0$, then $C_{r,s}(b_0,s)=0$ by Eq.(\ref{eq4.8}). Thus to show $K(\eta(a))=0$, it suffices to show that $K(\eta(a))\Delta_{b_0}\otimes \delta_s=0$. Note that  $K(\eta(a))\Delta_{b_0}\otimes \delta_s=C_{b_0,s}(b_0,s)\Delta_{b_0}\otimes \delta_s.$ Since ${}^\tau \! K=-K$ and ${}^\tau \!  \eta(a)=\eta(a)$, we have 
$$\pair{K(\eta(a))\Delta_{b_0}\otimes \delta_s,\Delta_{b_0}\otimes \delta_s}=0,$$
 By the definition of the $\pair{~,~}$ (see Section \ref{sec3.8}) and Eq.(\ref{eq3.5}), we get that 
$$0= \pair{K(\eta(a))\Delta_{b_0}\otimes \delta_s,\Delta_{b_0}\otimes \delta_s}=2C_{b_0,s}(b_0,s_0).$$
Thus we get $C_{b_0,s}(b_0,s)=0$.  This shows that $K(\eta(a))\Delta_{b_0}\otimes \delta_{s}=0$, and hence $K(\eta(a))=0$. Therefore, we get $K(\eta(a))=0$, for all $a \in k^{\times}$, when $\pi=\omega_\psi^+$.

Case (2.4), $\pi=\omega_{\psi,\mu}$ for a character $\mu$ of $E^1$ with $\mu^2\ne 1$. Recall that $\pi$ has a basis $\wpair{f_a,a\in k^\times}$, see Section \ref{sec3.7}. We record the following formulas
\begin{align}
\begin{split}\label{eq4.11}
\sigma([0,y,z])f_a\otimes \delta_s&=\psi(2sy+z)f_a\otimes \delta_s;\\
\sigma(\bx_\beta(b))f_a\otimes \delta_s&=\psi(b(a+s^2))f_a\otimes \delta_s.
\end{split}
\end{align}

From the relation \eqref{eq4.1}, we can get $K(\eta(a))\sigma(\bx_\beta(y))=\psi(a^2y)K(\eta(a)).$
Applying this to $f_r\otimes\delta_s$, we get $$\psi(y(r+s^2))K(\eta(y))f_r\otimes \delta_s=\psi(a^2y)f_r\otimes \delta_s.$$
Since $y$ is arbitrary, we get
$K(\eta(a))f_r\otimes \delta_s=0 \textrm{ if } r+s^2\ne a^2$. Assume that
$$K(\eta(a))f_{a^2-s^2}\otimes \delta_s=\sum_{b\in k^\times,t\in k}C_{s}(b,t)f_b\otimes \delta_t.$$
From the relations \eqref{eq4.2} and \eqref{eq4.11}, we can obtain that 
$$ \psi(a^2y)K(\eta(a))f_{a^2-s^2}\otimes \delta_s=\sigma(\bx_{\beta}(y))K(\eta(a))f_{a^2-s^2}\otimes \delta_s,$$
i.e., 
$$\psi(a^2y)\sum_{b,t}C_{s}(b,t)f_b\otimes \delta_t=\sum_{b,t}C_{s}(b,t)\psi(y(b+t^2))f_b\otimes \delta_t.$$
Thus we get 
\begin{equation}\label{eq4.12}C_{s}(b,t)=0 \textrm{ if } b+t^2\ne a^2.\end{equation} Furthermore, using \eqref{eq4.3}, we have $K(\eta(a))\sigma([0,y,0])f_{a^2-s^2}\otimes \delta_s=\sigma([0,y,0])K(\eta(a))f_{a^2-s^2}\otimes \delta_s$, which is equivalent to
$$\sum_{b,t}C_{s}(b,t)\psi(2sy)f_b\otimes \delta_t=\sum_{b,t}C_{s}(b,t)\psi(2ty)f_b\otimes \delta_t.$$
Thus we get
\begin{equation}\label{eq4.13}C_{s}(b,t)=0, \textrm{ if } t\ne s.\end{equation} 
Thus we get
$$K(\eta(a))f_{a^2-s^2}\otimes \delta_s=C_{s}(a^2-s^2,s)f_{a^2-s^2}\otimes \delta_s.$$
Since ${}^\tau\!K=-K$ and ${}^\tau \! \eta(a)=\eta(a)$, we get $\pair{K(\eta(a))f_{r_0}\otimes \delta_s,f_{r_0}\otimes \delta_s}=0$. By Eq.(\ref{eq3.7}), it is easy to see that $C_{s}(a^2-s^2,s)=0$. This shows that $K(\eta(a))=0$, for all $a \in k^{\times}$, in the case $\pi=\omega_{\psi,\mu}$. This also completes the proof of Step (2).

Step (3), we show that $K(\xi(a))=0$ for all $a\in k^\times$. 
One can check that ${}^\tau \! (\xi(a))=\xi(a)$ and $$\xi(a) g=g\xi(a),\forall g\in \SL_2(k).$$
Thus $$K(\xi(a))\sigma_\pi(g)=\sigma_\pi(g)K(\xi(a)), \forall g\in \SL_2(k).$$
Hence, $K(\xi(a))\in \End_{\SL_2(k)}(\sigma_\pi)$. Let $\sigma_\pi|_{\SL_2(k)}=\oplus V_i$ be the irreducible decomposition as in Proposition \ref{prop2.1}. As in the proof of $K(t(a))=0$ when $a^2=1$ in Step (1), we can write $K(\xi(a))=\sum_i C_i\id_{V_i}$, with $C_i\in \BC$ depending on $a$. Since $\id_{V_i}$ is invariant under the transpose $^t$ by Lemma \ref{lem3.9} and ${}^t \! K(\xi(a))+K(\xi(a))=0$, we can get $C_i=0$ and thus $K(\xi(a))=0.$

This completes the proof of Theorem \ref{thm4.1}.
\qed

\section{A multiplicity one theorem for \texorpdfstring{$\RU_4$}{Lg} over finite fields}
In this section, we briefly introduce a multiplicity one result for the unitary group $\RU_4(k)$, which is quite similar to the $\Sp_4(k)$ case. Note that some notations which were used for subgroups of $\Sp_4(k)$ in previous sections will be used for subgroups of $\RU_4$ in this subsection.

Recall that $k$ is a finite field with odd cardinality $q$ and $E$ is the quadratic extension of $k$.
Define
$$\RU_{2n}(k)=\wpair{g\in \GL_{2n}(E):  g \bpm &J_n\\-J_n& \epm {}^t\! \bar g=\bpm  &J_n\\ -J_n& \epm}.$$ 

\subsection{Conjugacy classes and some simple representations of \texorpdfstring{$\RU_2(k)$}{Lg}}
The conjugacy classes of $\RU_2(k)$ is given in the following table (see \cite{Ca}):

\begin{align*}
\begin{array}{|c|c|c|}
\hline
\textrm{Representative} & \textrm{Number of elements in class } & \textrm{ Number of classes }   \\
\hline
\begin{pmatrix}x &\\ &x \end{pmatrix},x\in E^1  &  1&q+1  \\
\hline
\begin{pmatrix}x&x\\ &x \end{pmatrix} ,x\in E^1 &(q-1)(q+1) & q+1   \\
\hline
 \begin{pmatrix}x &\\ &\bar x^{-1}\end{pmatrix},x\in E^\times-E^1&q(q+1)&\frac{(q+1)(q-2)}{2}\\
 \hline
\begin{pmatrix}x&y\\ \kappa y &x \end{pmatrix},y\ne 0&q(q-1)&\frac{q(q+1)}{2}\\
 \hline
\end{array}
\end{align*}
 Note that the norm map $\Nm:E^\times\ra k^\times$ is surjective and thus $\bpm 1&1\\ &1\epm$ and $\bpm 1&\kappa \\ &1\epm$ are in the same conjugacy classes. This is different from the $\SL_2(k)$ case. We explain a little bit about the last row. The condition $\begin{pmatrix}x&y\\ \kappa y &x \end{pmatrix}\in \RU_2(k)$ is equivalent to $\bar xy=x\bar y$ and $x\bar x-\kappa y\bar y=1$, which implies that $x\pm y\sqrt{\kappa}\in E^1$. Note that unlike in the $\SL_2(k)$ case, here we don't require that $x,y\in k$. We now count the number of representatives of the form $ \begin{pmatrix}x&y\\ \kappa y &x \end{pmatrix}$. If $x=0$, we get $y\sqrt{\kappa}\in E^1$ and there are totally $q+1$ such $y$. If $x\ne 0,y\ne 0$, let $u_1=x+y\sqrt{\kappa},u_2=x-y\sqrt{\kappa}$. Then $u_1,u_2\in E^1$,  $u_1\ne \pm u_2$. There are totally $(q+1)(q-1)$ choices of $u_1,u_2$, and hence such many of $x,y$. Note that $x+y\sqrt \kappa$ and $x-y\sqrt{\kappa }$ give the same conjugacy class. Thus we totally have $\frac{1}{2}(q+1+(q+1)(q-1))=q(q+1)/2$ classes in the last row. It is not hard to check that the number of elements in each class in the last row is $q(q-1)$ by counting the centralizer of each representative. Note that there are $(q+1)^2$ conjugacy classes and thus there are $(q+1)^2$ irreducible representations of $\RU_2(k)$.

Let $\eta$ be a character of $E^1$. View $\eta$ as a representation of $\RU_2(k)$ via the determinant map $\det:\RU_2(k)\ra E^1.$ We then have total $q+1$ irreducible 1-dimensional representations of $\RU_2(k)$.

Let $B_{\RU_2}=A_{\RU_2}\ltimes N_{\RU_2}$ be the upper triangular Borel subgroup of $\RU_2(k)$ with torus $A_{\RU_2}=\wpair{\diag(a,\bar a^{-1}),a\in E^\times}$ and unipotent subgroup $N_{\RU_2}$.
Given a character $\chi$ of $E^\times$, view it as a character on $A_{\RU_2}\cong E^\times$ and hence on $B_{\RU_2}$ such that the action of $N_{\RU_2}$ is trivial. We then consider the induced representation $I(\chi):=\Ind_{B_{\RU_2}}^{\RU_2(k)}(\chi)$ which is irreducible if and only if $\chi\ne \bar \chi^{-1}$, where $\bar \chi^{-1}$ is the character of $E^\times$ defined by $\bar \chi^{-1}(a)=\chi(\bar a^{-1}),a\in E^\times.$ Note that the condition $\chi=\bar \chi^{-1}$ is equivalent to $\chi\circ \Nm_{E/k}=1$. Since the norm map $\Nm_{E/k}:E^\times\ra k^\times$ is surjective, the condition $\chi=\bar \chi^{-1}$ is equivalent to that $\chi|_{k^\times}=1$ and giving such a character is amount to giving a character $\eta$ of $E^1$ via $\eta(a/\bar a)=\chi(a)$. On the other hand, we have $I(\chi)\cong I(\bar \chi^{-1})$ and thus there are totally $\frac{1}{2}(q+1)(q-2)$ irreducible representations of the form $I(\chi),\chi\ne \bar \chi^{-1}$. Note that $\dim I(\chi)=q+1.$

Consider the induced representation $I(1)$, where $1$ is the trivial character of $E^\times$. We have $I(1)=1\bigoplus \St$, where $1$ denotes the trivial representation of $\RU_2(k)$ by abuse of notation, $\St$ is the Steinberg representation and $\dim \St=q.$ Given a character $\eta$ of $\RU_2(k)$, form the tensor product $\eta\otimes \St$, which is still an irreducible representation of $\RU_2(k)$ of dimension $q$. We then get another family of irreducible representations of $\RU_2(k)$ given by $\wpair{\eta\otimes \St,\eta\in \widehat E^1}$ and there are total $q+1$ of them. Note that if $\chi$ is a character of $E^\times$ with $\chi|_{k^\times}=1$, then $I(\chi)=\eta\otimes I(1)=\eta \bigoplus (\eta\otimes \St)$, where $\eta$ is the character of $E^1$ determined by $\eta(a/\bar a)=\chi(a)$ for $a\in E^\times$. For simplicity, we write $\eta\otimes \St$ as $\St_\eta$. The following is the character table of the representations $\eta, I(\chi), \St_\eta$ for $\eta\in \widehat E^1, \chi\in \widehat E^\times, \chi\ne \bar \chi^{-1}$:
\begin{align*}
\begin{array}{|c|c|c|c|c|}
\hline
 & \ch_{\eta} & \ch_{I(\chi)} & \ch_{\St_\eta}   \\
\hline
\begin{pmatrix}x &\\ &x \end{pmatrix},x\in E^1  & \eta(x^2)&(q+1)\chi(x)& q\eta(x^2) \\
\hline
\begin{pmatrix}x&x\\ &x \end{pmatrix} ,x\in E^1 & \eta(x^2) &\chi(x)& 0 \\
\hline
 \begin{pmatrix}x &\\ &\bar x^{-1}\end{pmatrix},x\in E^\times-E^1& \eta(x\bar x^{-1}) &\chi(x)+\chi(\bar x^{-1})&\eta(x\bar x^{-1})\\
 \hline
\begin{pmatrix}x&y\\ \kappa y &x \end{pmatrix}, y\ne 0&\eta(x^2-\kappa y^2)&0&-\eta(x^2-\kappa y^2)\\
 \hline
\end{array}
\end{align*}

\subsection{Cuspidal representation}
Recall that $\psi$ is a fixed non-trivial additive character of $k$, and we identify $\psi$ as a character of $N_{\RU_2}$ by the isomorphism $N_{\RU_2}\cong k$.

Let $\mu$ be a non-trivial character of $E^1$ and let
$$\CW(\mu)=\wpair{f:E \ra \BC: f(yx)=\mu^{-1}(y)f(x),\forall x\in E, y\in E^1}.$$
 Recall that we have a representation $\omega_{\psi,\mu}$ of $\SL_2(k)$ on $\CW(\mu)$. Let $\eta$ be a character of $E^1$, one can extend the representation $\omega_{\psi,\mu}$ to a representation $\omega_{\psi,\mu,\eta}$ of $\RU_2(k)$ such that 
$$\omega_{\psi,\mu,\eta}\left(\bpm a&\\ &\bar a^{-1} \epm \right)\phi(x)=\eta(a\bar a^{-1})\phi(xa),\phi\in \CW(\mu).$$
Since any $g\in \RU_2(k)$ can be written as $g=\diag(a,\bar a^{-1})g_1$ for some $a\in E^\times,g_1\in \SL_2(k)$, the above relation uniquely determines an extension of $\omega_{\psi,\mu}$. It is not hard to check that the extension $\omega_{\psi,\mu,\eta}$ is indeed a representation. Let $1$ be the trivial representation of $E^1$, then $\omega_{\psi,\mu,\eta}=\eta\otimes \omega_{\psi,\mu,1}$. We now compute the character of $\omega_{\psi,\mu,\eta}$. 

For each $a\in k^\times$, we fix an element $x_a\in E^\times$ with $\Nm(x_a)=a$. Let $f_a\in \CW(\mu)$ be the function such that $f_a(x_b)=\delta_{a,b}$. Then $\wpair{f_a,a\in k^\times}$ becomes a basis of $W(\mu)$. Using this basis, we can compute the following character table:

\begin{align*}
\begin{array}{|c|c|c|}
\hline
 & \ch_{\omega_{\psi,\mu,\eta}}   \\
\hline
\begin{pmatrix}x &\\ &x \end{pmatrix},x\in E^1  & (q-1)\eta(x^2)\mu^{-1}(x) \\
\hline
\begin{pmatrix}x&x\\ &x \end{pmatrix} ,x\in E^1 & -\eta(x^2)\mu^{-1}(x)  \\
\hline
 \begin{pmatrix}x &\\ &\bar x^{-1}\end{pmatrix},x\in E^\times-E^1& 0 \\
 \hline
\begin{pmatrix}x&y\\ \kappa y &x \end{pmatrix}, y\ne 0&-\eta(x^2-\kappa y^2)(\mu^{-1}(x+y\sqrt{\kappa})+\mu^{-1}(x-y\sqrt{\kappa}))\\
 \hline
\end{array}
\end{align*}
From this table, we can check that the representation $\omega_{\psi,\mu,\eta}$ is irreducible if $\mu$ is non-trivial (which is always assumed). On the other hand, we have $\omega_{\psi,\mu,\eta}\cong\omega_{\psi,\mu_1,\eta_1}$ if and only if $(\mu_1,\eta_1)=(\mu,\eta)$ or $(\mu^{-1},\eta\mu^{-1})$. Thus there are totally $(q+1)q/2$ representations of the form $\omega_{\psi,\mu,\eta}$. The following family
$$ \eta, \St_\eta, I(\chi), \omega_{\psi,\mu,\eta}, $$
with $\mu,\eta\in \widehat E^1,\mu\ne 1, \chi\in \widehat E^\times,\chi\ne \bar \chi^{-1}$,  is a complete list of irreducible representations of $\RU_2(k)$.


\subsection{The Weil representation}\label{sec7.3}
Let $W=E\oplus E$, endowed with the skew-Hermitian structure 
$$\pair{u,v}=uJ_2 {}^t\bar v,$$
where $u,v$ are viewed as row vectors. We consider the Heisenburg group $\sH=W\oplus k$ with addition
$$[u_1,t_1]+[u_2,t_2]=[u_1+u_2,t_1+t_2+\frac{1}{2}\Tr_{E/k}(\pair{u_1,u_2})],u_1,u_2\in W,t_1,t_2\in k.$$
The group $\RU_2(k)$ act on $\sH$ by $g.[u,t]=[gu,t],g\in \RU_2(k),u\in W,t\in F$. Thus we can form the semi-direct product $\RU_2(k)\ltimes \sH$. 

 There is a Weil representation $\omega_{\psi}$ of $\RU_2(k)\ltimes \sH$ on $\CS(E)$ determined by the formulas 
\begin{align*}
\omega_\psi\left([x,0,z] \right)f(\xi)&=\psi(z)f(\xi+x),x\in E,z\in k,\\
\omega_\psi([0,y,0])f(\xi)&=\psi(\Tr(\bar y \xi))f(\xi), y\in E,\\
\omega_{\psi}\left( \begin{pmatrix}a&\\ &\bar a^{-1} \end{pmatrix}\right)f(\xi)&=f(a\xi), a\in E^\times,\\
\omega_{\psi}\left( \begin{pmatrix}1&b\\ &1 \end{pmatrix}\right)f(\xi)&=\psi(\Nm(\xi)b)f(\xi), b\in k,\\
\omega_{\psi}(w)f(\xi)&=-q^{-1}\sum_{y\in E}\psi(\Tr(\bar y \xi))f(y).
\end{align*}
Here $w=\bpm &1\\-1& \epm$ as usual. The Weil representation for general unitary group over finite fields is constructed in \cite{Ge}. The above formulas could be found in \cite[Section 4.1]{Bu}.

We next consider the restriction $\omega_\psi|_{\RU_2(k)}$. Let $\CW(1)$ be the subspace of $\CS(E)$ which consists of functions $f\in \CS(E)$ such that $f(ux)=f(x),\forall u\in E^1,x\in E.$ Then it is not hard to check that as representations of $\RU_2(k)$,  $\omega_{\psi}|_{\CW(1)}\cong \St$. Thus we have the following decomposition as in \cite[Corollary 4.5]{Ge} $$\omega_{\psi}|_{\RU_2(k)}=\St\bigoplus \left(\bigoplus_{\mu \in \widehat E^1,\mu\ne 1}\omega_{\psi,\mu,1} \right).$$
From this decomposition, the character table of $\omega_{\psi}$ is as following
\begin{align*}
\begin{array}{|c|c|c|}
\hline
 & \ch_{\omega_{\psi}}   \\
\hline
\bpm 1&\\ &1 \epm & q^2\\
\hline
\begin{pmatrix}x &\\ &x \end{pmatrix},x\in E^1,x\ne 1  & 1 \\
\hline
\bpm 1&1\\ &1 \epm & -q\\
\hline
\begin{pmatrix}x&x\\ &x \end{pmatrix} ,x\in E^1,x\ne 1 & 1 \\
\hline
 \begin{pmatrix}x &\\ &\bar x^{-1}\end{pmatrix},x\in E^\times-E^1& 1\\
 \hline
\begin{pmatrix}x&y\\ \kappa y &x \end{pmatrix}, x\pm y\sqrt{\kappa}=1,y\ne 0&-q\\
 \hline
\bpm x& y\\ \kappa y &x \epm, x\pm y\sqrt \kappa\ne 1, y\ne 0 & 1\\
\hline
\end{array}
\end{align*}
One can compare the above table with \cite[Theorem 4.5, Corollary 4.8.2 and Theorem 4.9.2]{Ge}.

\subsection{On the tensor product of an irreducible representation with the Weil representation}\label{sec7.4} In this section, we consider the decomposition of $\pi\otimes \omega_\psi$ for an irreducible representation $\pi$ of $\RU_2(k)$. 

Let $A$ be a set of representatives of characters of $E^\times$ such that $\chi|_{k^\times}\ne 1$ modulo the relation $\chi=\bar \chi^{-1}$. Then the cardinality of $A$ is $(q+1)(q-2)/2$. Let $B$ be a set of representatives of pairs $(\mu,\eta)\in \widehat E^1\times \widehat E^1$ with $\mu \ne 1$ modulo the relation  $(\mu, \eta)=(\mu^{-1},\eta \mu^{-1})$. Then the cardinality of $B$ is $(q+1)q/2$. The following proposition is $\RU_2(k)$ analogue of Proposition \ref{prop2.1} and we omit its proof.

\begin{prop}\label{prop7.2}
Let $\chi_1\in \widehat E^\times$ with $\chi_1\ne \bar \chi_1^{-1}$, and $\eta_1,\mu_1\in \widehat E^1$ with $\mu_1\ne 1$. We have
\begin{align*}
\eta_1\otimes\omega_{\psi}&=\St_{\eta_1}\bigoplus \left( \bigoplus_{\mu\in \widehat E^1,\mu \ne 1} \omega_{\psi,\mu,\eta_1} \right),\\
I(\chi_1)\otimes\omega_{\psi}&=2I(\chi_1)\bigoplus \left(\bigoplus_{\chi\in A,\chi\ne \chi_1,\bar \chi_1^{-1}} I(\chi) \right)\bigoplus \left(\bigoplus_{\eta\in \widehat E^1}\St_\eta \right) \bigoplus\left( \bigoplus_{(\mu,\eta)\in B} \omega_{\psi,\mu,\eta} \right),\\
\St\otimes\omega_\psi&=1\bigoplus \left(\bigoplus_{ \chi\in A } I(\chi) \right)\bigoplus\left(\bigoplus_{\eta\in \widehat E^1}\St_\eta \right)\bigoplus\left(\bigoplus_{(\mu,\eta)\in B,\eta\ne 1,\eta \ne \mu}\omega_{\psi,\mu,\eta}\right),\\
\omega_{\psi,\mu_1,1}\otimes\omega_\psi&=1\bigoplus \mu_1^{-1}\bigoplus\left(\bigoplus_{\chi\in A}I(\chi) \right)\bigoplus\left(\bigoplus_{\eta\ne 1, \eta \ne \mu}\St_\eta \right)\\
&\qquad \bigoplus\left(\bigoplus_{(\mu,\eta)\in B, \eta\ne 1, \eta\ne \mu, \eta\ne \mu_1,\eta\ne \mu_1\mu}\omega_{\psi,\mu,\eta} \right)
\end{align*}
\end{prop}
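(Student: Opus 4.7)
My plan is to follow exactly the strategy used for Proposition \ref{prop2.1}: compute the character of $\pi\otimes\omega_\psi$ by pointwise multiplication on each conjugacy class of $\RU_2(k)$, and then extract multiplicities by taking standard inner products against the characters of the irreducibles $\eta$, $I(\chi)$, $\St_\eta$, and $\omega_{\psi,\mu,\eta}$ listed at the end of Section 5.2. The character tables of all these representations, together with the character table of $\omega_\psi$, are already assembled in Sections 5.1--5.3, so the whole proof is in principle mechanical. The conjugacy classes of $\RU_2(k)$ split into the four families
\begin{align*}
\diag(x,x)\ (x\in E^1),\qquad &\bpm x&x\\ &x\epm\ (x\in E^1),\\
\diag(x,\bar x^{-1})\ (x\in E^\times-E^1),\qquad &\bpm x&y\\ \kappa y&x\epm\ (y\ne 0),
\end{align*}
and the class sizes are $1,q^2-1,q(q+1),q(q-1)$ respectively; these are the weights that appear in the inner product
$$(\ch_V,\ch_{V'})=\frac{1}{|\RU_2(k)|}\sum_{\mathrm{classes}\ \CC}|\CC|\,\ch_V(\CC)\,\overline{\ch_{V'}(\CC)}.$$

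For each of the four cases of $\pi$ in the proposition, I would proceed as follows. First, form $\ch_{\pi\otimes\omega_\psi}(g)=\ch_\pi(g)\ch_{\omega_\psi}(g)$ using the $\omega_\psi$ table in Section 5.3. Then for each candidate irreducible $V'$, the inner product $(\ch_{\pi\otimes\omega_\psi},\ch_{V'})$ reduces to a short list of character sums over $E^1$, $E^\times-E^1$, and over the ``elliptic'' set parametrized by $\wpair{x\pm y\sqrt\kappa}\subset E^1$. These sums collapse by the orthogonality relations for characters of $E^1$ and $E^\times$, together with the identities $\Nm(x+y\sqrt\kappa)=x^2-\kappa y^2$ and the fact that $\chi|_{k^\times}=1$ is equivalent to $\chi=\bar\chi^{-1}$. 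The ``coincidence'' cases where a multiplicity $2$ appears (e.g.\ $I(\chi_1)$ appears twice in $I(\chi_1)\otimes\omega_\psi$, because both $\chi$ and $\bar\chi^{-1}$ hit the same orbit in $A$) are exactly the finite-field analogues of the collision $\chi=\epsilon\chi_1^{\pm 1}$ appearing in Proposition \ref{prop2.1}, and would be handled case by case just as in the $\SL_2$ proof.

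As a useful simplification, I would first verify the simplest case $\eta_1\otimes\omega_\psi$ separately: since $\eta_1$ is a character, we have $\ch_{\eta_1\otimes\omega_\psi}=\eta_1\circ\det\cdot\ch_{\omega_\psi}$, and the claimed decomposition is just the twist by $\eta_1$ of the restriction formula $\omega_\psi|_{\RU_2(k)}=\St\oplus\bigoplus_{\mu\ne 1}\omega_{\psi,\mu,1}$ stated in Section 5.3, using $\eta_1\otimes\St=\St_{\eta_1}$ and $\eta_1\otimes\omega_{\psi,\mu,1}=\omega_{\psi,\mu,\eta_1}$. The case $\St\otimes\omega_\psi$ can then be handled via $I(1)\otimes\omega_\psi=(1\oplus\St)\otimes\omega_\psi$, so one only needs to treat $I(\chi_1)\otimes\omega_\psi$ (for general $\chi_1$) and $\omega_{\psi,\mu_1,1}\otimes\omega_\psi$ by direct character computation. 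For these last two, I would also run the dimension check $(q+1)q^2$ and $(q-1)q^2$, respectively, against the sum of dimensions on the right-hand side to catch arithmetic slips.

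The main obstacle, as in Proposition \ref{prop2.1}, is bookkeeping on the ``elliptic'' class $\bpm x&y\\ \kappa y&x\epm$: here $\ch_{\omega_{\psi,\mu,\eta}}$ involves the symmetrization $\mu^{-1}(x+y\sqrt\kappa)+\mu^{-1}(x-y\sqrt\kappa)$ and $\ch_{\omega_\psi}$ takes different values on the singleton orbit $x\pm y\sqrt\kappa=1$ versus the generic elliptic classes, so one must treat those six special conjugacy classes (the central ones, the non-semisimple ones with $x\in E^1$, and the elliptic unipotents) separately from the generic semisimple and generic elliptic ones. Once this careful case-split is made, the orthogonality of characters on $E^1$ (for the elliptic and central pieces) and on $E^\times$ (for the split-semisimple piece $\diag(x,\bar x^{-1})$) produce the decompositions stated, and the ``$-\mu_1^{-1}$ appears in $\omega_{\psi,\mu_1,1}\otimes\omega_\psi$'' phenomenon appears precisely when $\eta=\mu_1^{-1}$ makes an inner product jump from $0$ to $1$, exactly analogous to the fully-induced collision in Proposition \ref{prop2.1}.
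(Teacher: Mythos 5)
Your proposal is correct and is exactly the approach the paper intends: the paper in fact omits the proof of Proposition \ref{prop7.2}, stating only that it is the $\RU_2(k)$ analogue of Proposition \ref{prop2.1}, whose proof is precisely the character-times-character computation followed by inner products against the full list of irreducibles that you describe. Your class sizes, dimension checks, and the reduction of the $\eta_1\otimes\omega_\psi$ case to the twist of the decomposition $\omega_\psi|_{\RU_2(k)}=\St\oplus\bigoplus_{\mu\ne 1}\omega_{\psi,\mu,1}$ from Section 5.3 are all consistent with the paper's setup.
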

Note that $\St_\eta\otimes\omega_\psi=\eta\otimes \St\otimes\omega_\psi, $ and $\omega_{\psi,\mu,\eta}\otimes\omega_\psi=\eta\otimes\omega_{\psi,\mu,1}\otimes \omega_\psi$, the decomposition of $ \St_\eta\otimes\omega_\psi$ (resp. $\omega_{\psi,\mu,\eta}\otimes\omega_\psi $) can be given using the decomposition of $ \St\otimes\omega_\psi$ (resp. $\omega_{\psi,\mu,1}\otimes\omega_\psi $).

\subsection{A multiplicity one result for \texorpdfstring{$\RU_4(k)$}{Lg}}
In the group $\RU_4(k)$, we consider the subgroup 
$$J=\wpair{\bpm 1& *&*\\ &g&*\\ &&1 \epm, g\in \RU_2}.$$
Then there is an isomorphism $ \RU_2\ltimes \sH\ra J$ defined by
$$(g,[v,z])\ra \begin{pmatrix}1&\! v&z-\frac{1}{2}\bar x y+\frac{1}{2}x\bar y\\ &g&v^*\\ &&1 \end{pmatrix}, g\in \RU_2, v=(x,y)\in E^2,z\in k$$
where  $v^*=\begin{pmatrix}\bar y\\ -\bar x \end{pmatrix}$. Under this isomorphism, we view $\omega_\psi$ as a representation of $J$. Given an irreducible representation $\pi$ of $\RU_2(k)$, we consider the tensor product representation $\pi\otimes \omega_\psi$ of $J$.  Similar to the $\Sp_4(k)$ case, we have the following
\begin{thm}\label{thm: multiplicity one for U4}
Let $\pi$ be an irreducible representation of $\RU_2$ which is not of the form $I(\chi)$. Then the induced representation $\Ind_J^{\RU_4}(\pi\otimes \omega_\psi)$ is multiplicity free.
\end{thm}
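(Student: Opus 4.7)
The plan is to adapt the Gelfand--Kazhdan style argument of Theorem~\ref{thm4.1} to the unitary setting, following the template of Section~\ref{sec3.2} and Section~\ref{sec4}. First I would introduce an MVW involution on $\RU_4(k)$: take $d=\diag(-1,-1,1,1)$, set ${}^\iota g=d^{-1}gd$ and ${}^\tau g=({}^\iota g)^{-1}$. One checks immediately that ${}^\iota J=J$ and that ${}^\iota$ restricts on the $\RU_2$-factor to the MVW involution of $\RU_2(k)$; its action on $\sH$ is $[(x,y),t]\mapsto [(x,-y),-t]$. Let $\pi$ be an irreducible representation of $\RU_2(k)$ in the allowed list $\{\eta,\St_\eta,\omega_{\psi,\mu,\eta}\}$ and set $\sigma_\pi=\pi\otimes\omega_\psi$.

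Next I would construct, exactly as in Sections~\ref{sec3.4}--\ref{sec3.8}, explicit symmetric bilinear pairings on $\omega_\psi\times{}^\iota\omega_\psi$ (using the basis $\{\delta_s:s\in E\}$ and the pair $\langle\delta_s,\delta_t\rangle=\delta_{s,t}$), on $\pi\times{}^\iota\pi$ for each of the three families (characters are trivial; for $\St_\eta$ mimic Section~\ref{sec3.5} using the Bruhat model of $I(1)$ over $E$; for $\omega_{\psi,\mu,\eta}$ mimic Section~\ref{sec3.7} with the pair $\langle\phi,\phi'\rangle=\tfrac{1}{q+1}\sum_{x\in E}\phi(x)\phi'(\bar x)$). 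In each case the verification $\langle\sigma_\pi(j)\Phi,{}^\iota\sigma_\pi(j)\Phi'\rangle=\langle\Phi,\Phi'\rangle$ for $j\in J$ reduces via the Bruhat generators to a Gauss-sum identity of the same shape as in Lemma~\ref{lem3.5}. Tensoring yields a pair on $\sigma_\pi$ with ${}^t\sigma_\pi(j)=\sigma_\pi({}^\tau j)$, hence an anti-involution $K\mapsto {}^\tau K$ on $\CA(\RU_4(k),J,\sigma_\pi)$ by Lemma~\ref{lem3.3}. The analogue of Lemma~\ref{lem3.9} (idempotents onto $\SL_2$-isotypic summands are fixed by ${}^t$) carries over verbatim once one notes $\widetilde V_i\cong{}^\iota V_i$ for irreducible $\RU_2$-summands; this is the $\RU_2$-analogue of Lemma~\ref{lem2.5} and follows from the character table of Section~5.1.

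The next step is to identify the double cosets $J\backslash\RU_4(k)/J$. Using the Bruhat decomposition $\RU_4(k)=Ps_\alpha s_\beta s_\alpha P\cup Ps_\alpha P\cup P$ for the Klingen parabolic $P=MU$ with $M\cong \GL_1(E)\times\RU_2(k)$, and the decomposition $P=\bigsqcup_{a\in E^\times}t(a)J$ where $t(a)=\diag(a,1,1,\bar a^{-1})$, one gets representatives $t(a)$, $\eta(a)$, $\xi(a)$ parametrized by $a\in E^\times$, defined by the obvious analogues of the $\Sp_4$ formulas. By Corollary~\ref{cor3.4} it then suffices to show that any $K\in\CA(\RU_4(k),J,\sigma_\pi)$ with ${}^\tau K=-K$ vanishes on each of these representatives. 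The argument for $t(a)$ and $\xi(a)$ is essentially formal and proceeds exactly as in Steps~(1) and~(3) of the proof of Theorem~\ref{thm4.1}: for $t(a)$ with $a\bar a\neq 1$ use conjugation by the central element $[0,0,z]$ of $\sH$, and for the remaining $t(a)$ and for $\xi(a)$ use that $K$ is $\SL_2$-intertwining (here $\RU_2$-intertwining), decompose via Proposition~\ref{prop7.2}, and apply Lemma~\ref{lem3.9}.

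The main obstacle is Step~(2), showing $K(\eta(a))=0$, which must be done case by case. The mechanism is the same as in Cases~(2.1)--(2.4) of Theorem~\ref{thm4.1}: use the three commutation relations $\eta(a)\bx_\beta(y)=[0,0,N(a)y]\eta(a)$, $\eta(a)[0,0,N(a)y]=\bx_\beta(y)\eta(a)$, $\eta(a)[0,y,0]=[0,y,0]\eta(a)$, together with the diagonal action of the Heisenberg center and the short-root one-parameter subgroup on each model, to pin down $K(\eta(a))\Phi$ up to a scalar on each weight vector; then use $\langle K(\eta(a))\Phi,\Phi\rangle=0$ (which follows from ${}^\tau\eta(a)=\eta(a)$ and ${}^\tau K=-K$) together with the nondegeneracy formulas analogous to \eqref{eq3.1}, \eqref{eq3.4}, \eqref{eq3.5}, \eqref{eq3.7} to kill the scalar. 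I expect the trickiest case to be $\pi=\omega_{\psi,\mu,\eta}$, because the model lives on $E$ (not $k$) and the twist by $\eta$ enters through the $A_{\RU_2}$-action; consequently one must work with the norm $\Nm:E\to k$ and check that the system of equations forced by the three commutation relations is consistent only on a one-dimensional subspace of the appropriate weight, which the pairing then annihilates. The remaining cases $\pi=\eta$ and $\pi=\St_\eta$ are direct translations of Cases~(2.1) and~(2.2). Assembling these vanishings completes the proof.
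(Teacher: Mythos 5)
Your overall plan --- Gelfand--Kazhdan via an anti-involution preserving $J$, explicit pairings realizing ${}^\iota\sigma_\pi\cong\tilde\sigma_\pi$, double cosets $J\backslash\RU_4(k)/J$, and case-by-case vanishing of skew-invariant kernels --- is exactly the template the paper intends (the paper omits the proof of Theorem \ref{thm: multiplicity one for U4}, saying only that it is similar to Theorem \ref{thm4.1}). However, there is a genuine gap at the very first step: the involution you propose, ${}^\iota g=d^{-1}gd$ with $d=\diag(-1,-1,1,1)$, is not an MVW involution for the unitary group, and your claim that $\widetilde V\cong{}^\iota V$ for irreducible $\RU_2(k)$-summands ``follows from the character table'' is false for this ${}^\iota$. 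Conjugation preserves the determinant, so for the one-dimensional representation $\eta\circ\det$ one has ${}^\iota\eta=\eta$ while $\tilde\eta=\eta^{-1}$; these differ whenever $\eta^2\ne1$. The same failure occurs for $\St_\eta$ with $\eta^2\ne 1$ and for $\omega_{\psi,\mu,\eta}$: with your pairing a direct torus computation gives $\pair{\omega_{\psi,\mu,\eta}(t)\phi,\omega_{\psi,\mu,\eta}({}^\iota t)\phi'}=\eta^2\mu^{-1}(a\bar a^{-1})\pair{\phi,\phi'}$, which is not invariant in general. Consequently the required non-degenerate pairings on $\pi\times{}^\iota\pi$ do not exist, so neither does the transpose ${}^t$ on $\End(\sigma_\pi)$ nor the anti-involution $K\mapsto{}^\tau K$ on $\CA(\RU_4(k),J,\sigma_\pi)$, and the analogue of Lemma \ref{lem3.9} breaks down before Step (2) is ever reached.

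The fix is that for unitary groups the MVW involution must incorporate the Galois action: take ${}^\iota g=d^{-1}\bar g\,d$ (entrywise Frobenius composed with conjugation by $d$), which is an automorphism of $\RU_4(k)$ preserving $J$, inverts the center of $\sH$, and restricts on the $\RU_2$-factor to an involution satisfying $\tilde\pi\cong{}^\iota\pi$ for every irreducible $\pi$ --- this can now genuinely be read off the character table of Section 5.1, since $\overline{\det g}=(\det g)^{-1}$ for determinants in $E^1$ and in each row the class of $\bar g$ agrees with that of a conjugate of $g^{-1}$. All of your pairings must then be rewritten accordingly (for instance $\pair{\phi,\phi'}=\sum_{\xi\in E}\phi(\xi)\phi'(\bar\xi)$ on $\CS(E)$, and the Galois twist inserted in the $\St_\eta$ and $\omega_{\psi,\mu,\eta}$ pairings). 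With that correction the remaining steps you outline --- the representatives $t(a),\eta(a),\xi(a)$ with $a\in E^\times$, the vanishing of $K(t(a))$ for $\Nm(a)\ne1$ via the Heisenberg center, the idempotent argument using Proposition \ref{prop7.2} for the remaining diagonal and big-cell representatives, and the weight-by-weight analysis for $\eta(a)$ --- do go through along the lines you describe.
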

Since the proof is similar to that of Theorem \ref{thm4.1}, we omit the details.


\section{The group \texorpdfstring{$\bG_2$}{Lg}}

In this section, we introduce the multiplicity one problem of certain Fourier-Jacobi models for the split exceptional group $\bG_2(k)$, which is quite similar to what we considered in Section \ref{sec3} for $\Sp_4(k)$. In this section, the notations $J, P, M,$ \textit{etc.}, will be used as subgroups of $\bG_2(k)$ rather than subgroups of $\Sp_4(k)$ as in Sections \ref{sec3} and \ref{sec4}. For simplicity, denote $G=\bG_2(k)$. 

\subsection{Roots and commutator relations}
 The group $\bG_2$ has two simple roots, the short root $\alpha$ and the long root $\beta$. The set of the positive roots is $\Sigma^+=\wpair{\alpha, \beta, \alpha+\beta, 2\alpha+\beta, 3\alpha+\beta, 3\alpha+2\beta}$.  Let $(~,~)$ be the inner product in the root system and $\pair{~,~}$ be the pair defined by $\pair{\gamma_1,\gamma_2}=\frac{2(\gamma_1,\gamma_2)}{(\gamma_2,\gamma_2)}$. For $\bG_2$, we have the relations:
$$ \pair{\alpha,\beta}=-1, \pair{\beta, \alpha}=-3.$$
For a root $\gamma$, let $s_\gamma$ be the reflection defined by $\gamma$, i.e., $s_\gamma(\gamma')=\gamma'-\pair{\gamma',\gamma}\gamma$. We have the relation
$$s_\alpha(\beta)=3\alpha+\beta, s_\beta(\alpha)=\alpha+\beta.$$
Let $W(\bG_2)$ be the Weyl group of $\bG_2$, which is generated by $s_\alpha,s_\beta$ and has size 12.

We use the following standard notations from Chevalley group theory (see \cite{St}). For a root $\gamma$, let $U_\gamma \subset G$ be the root space of $\gamma$, and  let $\bx_\gamma: k\ra U_\gamma$ be a fixed isomorphism which satisfies various Chevalley relations (see \cite[Chapter 3]{St}). Following \cite{St}, for $t\in k^\times$, denote $w_\gamma(t)= \bx_{\gamma}(t)\bx_{-\gamma}(-t^{-1})\bx_{\gamma}(t)$ and $w_\gamma=w_\gamma(1)$. Note that $w_\gamma$ is a representative of $s_\gamma$. Let $h_\gamma(t)=w_\gamma(t) w_\gamma^{-1}$.  For simplicity, we denote $$w_1=w_\alpha w_\beta w_\alpha^{-1}, w_2=w_\alpha w_\beta w_\alpha w_\beta^{-1}w_\alpha^{-1}.$$

Let $T$ be the subgroup of $G$ which consists of  elements of the form $h_\alpha(t_1)h_\beta(t_2), t_1,t_2\in k^\times$ and $U$ be the subgroup of $G$ generated by $U_\gamma$ for all $\gamma\in \Sigma^+$. Let $B=TU$, which is a Borel subgroup of $G$. 
 
For $t_1,t_2\in F^\times,$ denote $h(t_1,t_2)=h_\alpha(t_1t_2)h_\beta(t_1^2t_2)$. We can check the following relations
\begin{equation}
\begin{split}\label{eq5.1}
h^{-1}(t_1,t_2)\bx_\alpha(r)h(t_1,t_2)&=\bx_\alpha (t_2^{-1} r), \\
 h^{-1}(t_1,t_2)\bx_\beta(r)h(t_1,t_2)&=\bx_\beta(t_1^{-1}t_2 r).
\end{split}
\end{equation}
The notation $h(a,b)$ agrees with that of \cite{Gi}, and our $h(a,b)$ is $h(a,b,a^{-1}b^{-1})$ in the notation of \cite{CR}. One can also check that 
\begin{equation} \label{eq5.2}w_\alpha h(t_1,t_2) w_\alpha^{-1}= h(t_1 t_2, t_2^{-1}), \quad w_\beta h(t_1,t_2) w_\beta^{-1}= h(t_2,t_1).\end{equation}
For $g_1,g_2\in G$, denote $[g_1,g_2]=g_1^{-1}g_2^{-1}g_1g_2$. We have the following commutator relations (see \cite[p.443]{Re}):
\begin{equation}
\begin{split}\label{eq5.3}
[\bx_\alpha(x), \bx_\beta(y)]&=  \bx_{\alpha+\beta}(-xy)\bx_{2\alpha+\beta}(-x^2 y) \bx_{3\alpha+\beta}( x^3 y)\bx_{3\alpha+2\beta}(-2x^3 y^2),\\
[\bx_\alpha(x), \bx_{\alpha+\beta}(y)]&= \bx_{2\alpha+\beta}( -2xy)\bx_{3\alpha+\beta}(3x^2 y) \bx_{3\alpha+2\beta}(3xy^2) , \\
[\bx_\alpha(x), \bx_{2\alpha+\beta}(y)]&=\bx_{3\alpha+\beta}( 3xy),  \\
[\bx_\beta(x), \bx_{3\alpha+\beta}(y)]&=\bx_{3\alpha+2\beta}(xy),  \\
[\bx_{\alpha+\beta}(x), \bx_{2\alpha+\beta}(y)]&=\bx_{3\alpha+2\beta}(3xy).
\end{split}
\end{equation}
For all the other pairs of positive roots $\gamma_1, \gamma_2$, we have $[\bx_{\gamma_1}(x), \bx_{\gamma_2}(y)]=1$. 

We also need the Chevalley relation $w_{\gamma_1} \bx_{\gamma_2}(r) w_{\gamma_1}^{-1}=\bx_{w_{\gamma_1}(\gamma_2)}(c(\gamma_1,\gamma_2) r)$ (see \cite[Lemma 20, (b)]{St}), where $c(\gamma_1,\gamma_2)\in \wpair{\pm1}$ and  $c(\gamma_1,\gamma_2)=c(\gamma_1,-\gamma_2)$. The numbers $c(\gamma_1,\gamma_2)$ are given in the following:
\begin{align}\label{eq5.4} c(\alpha, \alpha)=c(\alpha, 2\alpha+\beta)=c(\alpha,3\alpha+\beta)=-1, &\quad  c(\alpha, \beta)=c(\alpha, \alpha+\beta)=c(3\alpha+2\beta)=1,\\
c(\beta,\beta)=c(\beta,\alpha+\beta)=c(\beta,3\alpha+2\beta)=-1, &\quad c(\beta,\alpha)=c(\beta,2\alpha+\beta)=c(\beta,3\alpha+\beta)=1.\nonumber
\end{align}

\subsection{Subgroups}\label{sec5.2}
 The group $G$ has two proper parabolic subgroups. Let $P=M\ltimes V$ be the parabolic subgroup of $G$ with Levi $M$ and unipotent $V$, such that $U_\beta\subset M\cong \GL_2(k)$. The isomorphism $M\cong \GL_2$ is determined by
\begin{align*}
\bx_\beta(r)\mapsto \begin{pmatrix}1& r\\ &1 \end{pmatrix},\quad
h(a,b)\mapsto \begin{pmatrix}a&\\ &b \end{pmatrix}.
\end{align*}
The unipotent subgroup $V$ of $P$ consists of root spaces of $\alpha, \alpha+\beta, 2\alpha+\beta, 3\alpha+\beta,3\alpha+2\beta$, and a typical element of $V$ is of the form 
$$ \bx_{\alpha}(r_1)\bx_{\alpha+\beta}(r_2)\bx_{2\alpha+\beta}(r_3)\bx_{3\alpha+\beta}(r_4)\bx_{3\alpha+2\beta}(r_5), r_i\in k.$$
To ease the notation, we write the above element as $(r_1,r_2,r_3,r_4,r_5)$. Denote by $J$ the following subgroup of $P$  
$$J=\SL_2(k)\ltimes V.$$
We always view $\SL_2(k)$ as a subgroup of $G$ via the embedding $\SL_2(k)\subset \GL_2(k)\cong M\incl G$.
Let $ V_1$ (resp. $Z$) be the subgroup of $V$ which consists of root spaces of $3\alpha+\beta$ and $3\alpha+2\beta$ (resp. $2\alpha+\beta, 3\alpha+\beta$ and $3\alpha+2\beta$). Note that $P$ and hence $J$ normalize $V_1$ and $Z$. 

\subsection{An anti-involution on \texorpdfstring{$G$}{Lg}}\label{sec5.3}
For $g\in G$, we define ${}^\iota g=h(1,-1)gh(1,-1)$ and ${}^\tau g={}^\iota g^{-1}$. Then we can check that 
$${}^\iota(r_1,r_2,r_3,r_4,r_5)=(-r_1,r_2,-r_3,r_4,-r_5),$$
and $${}^\iota \begin{pmatrix}a&b\\ c&d \end{pmatrix}=\begin{pmatrix}a&-b\\ -c&d \end{pmatrix}.$$
In particular, both the involution ${}^\iota$ and the anti-involution ${}^\tau$ preserve $J$.

\subsection{Weil representations and the problem}
Let $W=k^2$, endowed with the symplectic structure $\pair{~,~}$ defined by
\begin{equation}\label{eq5.5} \pair{(x_1,y_1),(x_2,y_2)}=-2x_1y_2+2x_2y_1. \end{equation}
Note that the symplectic form on $W$ here is different from the one defined in Section \ref{sec2.2}. The reason for choosing this non-standard symplectic structure on $W$ will be explained below. 

Let $\sH$ be the Heisenberg group associated with the symplectic space $W$. Explicitly, $\sH=W\oplus k$ with addition
$$[x_1,y_1,z_1]+[x_2,y_2,z_2]=[x_1+x_2,y_1+y_2,z_1+z_2-x_1y_2+x_2y_1].$$
Let $\SL_2(k)$ act on $\sH$ such that it acts on $W$ from the right and acts on the third component $k$ in $\sH$ trivially. Then we can form the semi-direct product $\SL_2(k)\ltimes \sH.$ The product map in $\SL_2(k)\ltimes \sH$ is given by 
$$(g_1,v_1)(g_1,v_2)=(g_1g_2, v_1.g_2+ v_2).$$
Let $\psi$ be a fixed non-trivial additive character of $k$, and let $\omega_\psi$ be the Weil representation of $\SL_2(k)\ltimes \sH$ on $\CS(k)$, where $\CS(k)$ is the space of $\BC$-valued functions on $k$. The formulas (\ref{eq2.1}) should be adapted to our new symplectic structure on $W$. 

Define a map
$\pr: V\ra \sH$
$$\pr((r_1,r_2,r_3,r_4,r_5))=[r_1,r_2,r_3-r_1 r_2] . $$
From the commutator relations in (\ref{eq5.3}), we can check that $\pr$ is a group homomorphism and defines an exact sequence
$$0\ra  V_1 \ra V\ra \sH\ra 0.$$
Here, to ensure that $\pr$ is a group homomorphism, we need to choose the symplectic form on $W$ in the non-standard way (\ref{eq5.5}). It seems that there is a typo in the formula of the projection map $\pr$ in \cite[p.316]{Gi}.

For $g=\begin{pmatrix}a & b\\ c& d \end{pmatrix}\in \SL_2(F) \subset M$, we have
$$g^{-1}(r_1,r_2,r_3,0,0) g=(r_1', r_2', r_3', r_4',r_5'),$$
where $r_1'=a r_1- cr_2, r_2'=-b r_1 + dr_2, r_3'-r_1'r_2'=r_3-r_1r_2$. This implies that the map $\overline \pr: J=\SL_2(F)\ltimes V\ra \SL_2(F)\ltimes \sH$, defined by, $$(g,v)\mapsto (g^*, \pr(v)), g\in \SL_2, v\in V,$$
is a group homomorphism,
where $g^*=\begin{pmatrix} a& -b \\ -c & d \end{pmatrix}=d_1 g d_1^{-1}$ and $d_1=\diag(-1,1)\in \GL_2(k)$. Now, we can view the Weil representation $\omega_\psi$ as a representation of $J$ by composing with the map $\ov{\pr}$. Given an irreducible representation $\pi$ of $\SL_2(k)$, view it as a representation of $J$ via the quotient map $J\ra \SL_2(k)$.

Similarly as the $\Sp_4(k)$ case, we consider the problem: for what irreducible representation $\pi$ of $\SL_2(k)$, the induced representation $\Ind_J^{\bG_2(k)}(\pi\otimes\omega_\psi)$ is multiplicity free? 
The answer is similar to the $\Sp_4(k)$ case and the proof will be given in next section.

\begin{rmk}\label{rmk5.1}
\rm{
If $k$ is a local field, let $\wt{\SL}_2(k)$ be the metaplectic double cover of $\SL_2(k)$. Then there is a Weil representation $\omega_\psi$ of $\wt{\SL}_2(k)\ltimes \sH$. By composing with the projection map, we could view $\omega_\psi$ as a representation of $\wt{\SL}_2(k)\ltimes V$. Given a genuine irreducible representation $\pi$ of $\wt{\SL}_2(k)$, the tensor product $\pi\otimes \omega_\psi$ can be viewed as a representation of $J=\SL_2(k)\ltimes V$.
Due to the similarity between the $\RG_2$ case and the $\Sp_4$ case, we propose the following
\begin{conj}\label{conj5.2}
For any self-dual irreducible representation $\sigma$ of $\bG_2(k)$ and any irreducible genuine representation $\pi$ of $\wt\SL_2(k)$, we have 
$$\dim\Hom_J(\sigma,\pi\otimes \omega_\psi)\le 1.$$
\end{conj}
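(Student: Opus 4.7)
The plan is to mimic the finite-field Gelfand-Kazhdan argument of Section \ref{sec3} in the local-field setting, following the template of Sun's proof \cite{Su} of Fourier-Jacobi uniqueness for $\Sp_{2n}$. The anti-involution ${}^\tau g = h(1,-1)g^{-1}h(1,-1)$ constructed in Section \ref{sec5.3} already preserves $J$, and its restriction to $\SL_2(k)\subset M$ is the MVW involution from Section \ref{sec1.6}. First I would verify that $\iota$ is an MVW-type involution on all of $\bG_2(k)$, i.e.\ that every semisimple element is $\iota$-conjugate to its inverse, which should follow from the fact that $h(1,-1)$ represents the longest Weyl element up to a central element on each Levi. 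Granting this, together with self-duality of $\sigma$, one obtains $\sigma^\tau\cong\widetilde\sigma\cong\sigma$. For the other factor, since $\iota$ acts as $-1$ on the center $k$ of $\sH$, the Weil representation satisfies $\omega_\psi\circ\iota\cong\omega_{\psi^{-1}}\cong\widetilde\omega_\psi$, and genuineness forces $\pi\circ\iota\cong\widetilde\pi$ on $\wt\SL_2(k)$; together these give $(\pi\otimes\omega_\psi)^\tau\cong\widetilde{\pi\otimes\omega_\psi}$ as representations of $J$.

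Next I would reformulate the conjecture as a distributional statement. By the standard argument (cf.\ \cite{AGRS,Su}), if both $\Hom_J(\sigma,\pi\otimes\omega_\psi)$ and $\Hom_J(\widetilde\sigma,\widetilde{\pi\otimes\omega_\psi})$ are nonzero, then there exists a nonzero $(J\times J)$-equivariant distribution $D$ on $\bG_2(k)$ valued in $\End_{\mathbb C}(\pi\otimes\omega_\psi)$, twisted by the action of $\pi\otimes\omega_\psi$ on left and right. The symmetries above ensure that the push-forward $D^\tau$ is another such distribution. If one can show that every such distribution must satisfy $D^\tau=D$, then symmetry of the canonical pairing would force dimensional collapse and give the desired bound. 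This is precisely the Gelfand-Kazhdan criterion and should be verified orbit-by-orbit on $J\backslash\bG_2(k)/J$.

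The geometric heart of the proof is the analysis of these double cosets. Using the Bruhat decomposition $\bG_2(k)=\bigsqcup_w PwP$ together with the Levi inclusion $\SL_2(k)\subset M$, one produces a finite list of $(J,J)$-orbit representatives, analogous to the $t(a),\eta(a),\xi(a)$ of the $\Sp_4$ case in Section \ref{sec4}. I would expect every representative to be $\tau$-fixed modulo $J\times J$, so that $\tau$-symmetry of $D$ at each representative reduces to an algebraic identity in $\End(\pi\otimes\omega_\psi)$ obtained from stabilizer equivariance together with the invariant pairings in Sections \ref{sec3.4}--\ref{sec3.7} (or their local-field analogues). The main obstacle will be the non-open orbits, where the $(J\times J)$-stabilizer is larger and the transverse geometry is nontrivial. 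Here one needs a ``nilpotent vanishing'' argument: a Bruhat-style filtration on distributions by support, combined with a careful study of the stabilizer action on the conormal bundle, to rule out non-$\tau$-symmetric invariant distributions supported on any lower stratum. In the $p$-adic case this would rest on a Shalika-germ analysis, and in the Archimedean case on the Aizenbud-Gourevitch-Rallis-Schiffmann theory of Nash manifolds. I expect the real difficulty to lie exactly here, since the five-step commutator structure (\ref{eq5.3}) of $V$ inside $\bG_2$ couples the Heisenberg geometry of $\SL_2(k)\ltimes\sH$ to the $\bG_2$-Bruhat strata in a way that has no direct analogue in the $\Sp_{2n}$ setting treated in \cite{Su}.
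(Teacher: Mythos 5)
You should first be clear about what you are up against: the statement is Conjecture 5.2 of the paper, stated over local fields, and the authors do not prove it. They say explicitly that the self-dual hypothesis is imposed ``due to certain technical difficulties in the application of the Gelfand-Kazhdan method,'' and the only evidence they offer is the special case $\dim\Hom_J(\sigma,\tilde I(s,\chi)\otimes\omega_\psi)\le 1$ for non-supercuspidal $\sigma$. There is therefore no proof in the paper to compare yours against, and your text must be judged as a research plan. As a proof it has a genuine gap, and you name it yourself: the vanishing of non-$\tau$-symmetric $(J\times J)$-equivariant distributions supported on the non-open strata of $J\backslash\bG_2(k)/J$. Everything before that point is the routine half of the Gelfand-Kazhdan formalism; the orbit-by-orbit vanishing \emph{is} the content of the conjecture, and deferring it to an unspecified ``nilpotent vanishing argument'' via Shalika germs or Nash manifolds does not close it. Note also that in \cite{Su} the corresponding step for $\Sp_{2n}$ is not a soft stratification argument: it proceeds by reducing the Fourier-Jacobi problem to the linear/Bessel multiplicity-one theorems of \cite{AGRS,SZ} for a smaller pair of groups, and no analogue of that reduction, nor of the underlying theorem for a smaller pair, is currently available inside $\bG_2$. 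This is precisely why the statement remains a conjecture.

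There is also a concrete error in your first step. You propose to verify that $\iota=\Ad(h(1,-1))$ is an MVW-type involution on all of $\bG_2(k)$, i.e.\ that it carries every irreducible representation to its contragredient. Since $\bG_2$ has no outer automorphisms, $\iota$ is inner, so $\sigma\circ\iota\cong\sigma$ for \emph{every} irreducible $\sigma$; if one also had $\sigma\circ\iota\cong\tilde\sigma$ then every irreducible representation of $\bG_2(k)$ would be self-dual and the hypothesis of the conjecture would be vacuous, which is not the case. The self-duality assumption is exactly what substitutes for the missing MVW property on the $\bG_2$ factor: it is what lets you pass from $\Hom_J(\sigma,\pi\otimes\omega_\psi)\neq 0$ to $\Hom_J(\tilde\sigma,\widetilde{\pi\otimes\omega_\psi})\neq 0$, which the Gelfand-Kazhdan criterion requires. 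Your proposed verification would either fail or establish something other than what you need, so this step should be rewritten to invoke self-duality directly, as the authors do; the restriction of $\iota$ to $J$ (where it genuinely plays the MVW role for $\SL_2(k)$ and for $\omega_\psi$, as in Sections \ref{sec1.6} and \ref{sec5.3}) is the only place the involution does real work.
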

Here the ``self-dual" condition might be removable and we add it due to certain technical difficulties in the application of the Gelfand-Kazhdan method. A nonzero element in $\Hom_J(\sigma,\pi\otimes \omega_\psi) $ defines an embedding $\sigma\incl \Ind_J^{\bG_2(k)}(\pi\otimes\omega_\psi)$. Such a realization of $\sigma$ will be called a {\it Fourier-Jacobi} model of $\sigma$ with respect to the datum $(\pi\otimes\omega_\psi,J)$. 
Given a character $\chi$ of $\GL_1(k)$, consider the genuine indueced representation 
$\tilde I(s,\chi)$ of $\wt\SL_2(k)$, $s\in \BC$, given an irreducible generic representation $\sigma$ of $\bG_2(k)$,
Ginzburg (\cite{Gi}) has contructed a local zeta integral which defines an elements in 
$\Hom_J(\sigma, \tilde I(s,\chi)\otimes\omega_\psi )$. 
Thus the above conjecture would imply the local functional equation for Ginzburg's local zeta integral in \cite{Gi}. We can prove that $\dim\Hom_J(\sigma, \tilde I(s,\chi)\otimes\omega_\psi )\le 1 $ for 
{\it non-supercuspidal} representations $\sigma$, which could be viewed as a special case and an evidence of the above conjecture.  \qed}
\end{rmk}

As preparations of our multiplicity results in next section, we record some useful facts on the Weil representations. From the above description of the projection map and modified versions\footnote{See \cite{Ku}, for example, for the dependence of these formulas on the symplectic form.} of Eq.(\ref{eq2.1}), we have the following formulas
\begin{align}
\begin{split}\label{eq5.7}
\omega_\psi((r_1,0,r_3,r_4,r_5))\phi(\xi)&=\psi(r_3)\phi(\xi+r_1),\\
\omega_\psi((0,r_2,0,0,0))\phi(\xi)&=\psi(-2\xi r_2)\phi(\xi),\\
\omega_\psi(h(a,a^{-1}))\phi(\xi)&=\epsilon(a)\phi(a\xi),\\
\omega_\psi (\bx_\beta(b))\phi(\xi)&=\psi(b\xi^2)\phi(\xi),\\
\omega_\psi\left(\begin{pmatrix}&b\\ -b^{-1}& \end{pmatrix} \right)\phi(\xi)&=\frac{1}{\gamma(b,\psi)}\sum_{x\in k} \phi(x)\psi(-2xb\xi).
\end{split}
\end{align}
The space $\CS(k)$ has a basis $\wpair{\delta_s,s\in k}$, where $\delta_s(t)=\delta_{s,t}$.
From formulas (\ref{eq5.7}), we have 
\begin{align}
\begin{split}\label{eq5.8}
\omega_\psi(\bx_\beta(b))\delta_s&=\psi(bs^2)\delta_s,\\
 \omega_\psi((0,y,0,0,0))\delta_s&=\psi(-2sy)\delta_s,\\
 \omega_\psi((r_1,0,r_3,r_4,r_5))\delta_s&=\psi(r_3)\delta_{s-r_1}.
\end{split}
\end{align}

\begin{lem}\label{lem5.3}
For $\phi,\phi'\in \CS(k)$, we define a pair
$$\pair{\phi,\phi'}=\sum_{\xi\in k}\phi(\xi)\phi'(-\xi).$$
Then we have 
$$\pair{\omega_\psi(j)\phi,\omega_\psi({}^\iota \! j)\phi'}=\pair{\phi,\phi'},\forall j\in J, \phi,\phi'\in \CS(k).$$
For $A\in \End_{\mathbb{C}}(\CS(k))$, we define ${}^t \! A$ by 
$$\pair{{}^t \! A\phi,\phi'}=\pair{\phi, A\phi'}.$$
The operator ${}^t$ is an anti-involution on $\End_{\mathbb{C}}(\CS(k))$ and satisfies ${}^t\omega_\psi(j)=\omega_\psi(^\tau \! j)$.
\end{lem}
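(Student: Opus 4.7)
The plan is to follow the approach of Lemma~\ref{lem3.5}: verify the invariance identity on a generating set of $J$ and extend it by multiplicativity. Since $J = \SL_2(k)\ltimes V$, it suffices to check the identity on the five root subgroups generating $V$ together with the $\SL_2(k)$-generators $\bx_\beta(b)$, $h(a,a^{-1})$, and the Weyl element $w = \bspm & 1 \\ -1 & \espm$. Most cases will reduce to routine manipulations of the explicit formulas \eqref{eq5.7}, and the only nontrivial input will be the standard Gauss sum identity $\gamma(1,\psi)\gamma(-1,\psi) = q$, which handles the Weyl element case.

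For the root subgroups in $V$, the elements $\bx_{3\alpha+\beta}(r_4)$ and $\bx_{3\alpha+2\beta}(r_5)$ lie in $V_1 = \ker(\pr)$ and therefore act trivially under $\omega_\psi$, making the invariance automatic. For $(r_1,0,r_3,0,0)$, the identity ${}^\iota(r_1,0,r_3,0,0) = (-r_1,0,-r_3,0,0)$ from Section~\ref{sec5.3} causes the phases $\psi(\pm r_3)$ to cancel while the translations align via the substitution $\xi \mapsto \xi+r_1$. For $(0,r_2,0,0,0)$, which is fixed by ${}^\iota$, the multiplication phase $\psi(-2\xi r_2)$ in the first slot is cancelled by $\psi(-2(-\xi)r_2)$ in the second slot; this is precisely the reason for the sign-flip $\phi'(-\xi)$ in the pair, which differs from the Lemma~\ref{lem3.5} convention and compensates for the different behavior of ${}^\iota$ on this root subgroup. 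The torus element $h(a,a^{-1})$ is fixed by ${}^\iota$, and invariance follows from $\epsilon(a)^2 = 1$ together with the change of variable $\xi \mapsto a\xi$; for $\bx_\beta(b)$, one has ${}^\iota\bx_\beta(b) = \bx_\beta(-b)$, and the quadratic phases $\psi(b\xi^2)$ and $\psi(-b(-\xi)^2) = \psi(-b\xi^2)$ cancel using $(-\xi)^2 = \xi^2$.

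The main computational step is the Weyl element $w$, for which one computes ${}^\iota w = w^{-1}$. Expanding the pair via \eqref{eq5.7} and interchanging sums yields
\[\pair{\omega_\psi(w)\phi,\omega_\psi(w^{-1})\phi'} = \frac{1}{\gamma(1,\psi)\gamma(-1,\psi)}\sum_{x,y\in k}\phi(x)\phi'(y)\sum_{\xi\in k}\psi(-2(x+y)\xi),\]
which by orthogonality of characters collapses to $\frac{q}{\gamma(1,\psi)\gamma(-1,\psi)}\pair{\phi,\phi'}$, and equals $\pair{\phi,\phi'}$ by the Gauss sum identity. The second assertion is then formal: the pair is symmetric under $\xi \mapsto -\xi$ and non-degenerate (with $\pair{\delta_s,\delta_{-t}} = \delta_{s,t}$), so the assignment $A \mapsto {}^t\!A$ is a well-defined anti-involution on $\End_{\mathbb{C}}(\CS(k))$. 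Substituting $\phi \mapsto \omega_\psi(j^{-1})\phi$ into the invariance identity and using ${}^\tau j = ({}^\iota j)^{-1}$ yields $\pair{\phi,\omega_\psi(j)\phi'} = \pair{\omega_\psi({}^\tau j)\phi,\phi'}$ for all $\phi,\phi'$, whence ${}^t\omega_\psi(j) = \omega_\psi({}^\tau j)$ by non-degeneracy. I expect no serious obstacle beyond the careful bookkeeping of signs arising from the non-standard symplectic form on $W$ and from the projection $\pr$.
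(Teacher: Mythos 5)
Your proposal is correct and is exactly the argument the paper has in mind: the paper omits the proof of Lemma \ref{lem5.3} with the remark that it is ``similar to that of Lemma \ref{lem3.5},'' and your write-up is precisely that adaptation, checking the generators of $J$ via the formulas \eqref{eq5.7} and using $\gamma(1,\psi)\gamma(-1,\psi)=q$ for the Weyl element. You also correctly identify the one genuinely new point, namely that the sign flip $\phi'(-\xi)$ in the pairing is what compensates for ${}^\iota$ fixing the root subgroup $(0,r_2,0,0,0)$ (in contrast to the $\Sp_4$ case, where ${}^\iota$ negates the corresponding coordinate).
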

\begin{proof}
The proof is similar to that of Lemma \ref{lem3.5}, and thus is omitted.
\end{proof}

Note that the above pair on $\CS(k)$ satisfies the property 
\begin{equation}\label{eq5.9}\pair{\delta_a,\delta_{b}}=\delta_{a,-b},a,b\in k. \end{equation}

\subsection{Transposes on \texorpdfstring{$\End(\pi\otimes \omega_\psi)$}{Lg} for an irreducible representation \texorpdfstring{$\pi$}{Lg} of \texorpdfstring{$\SL_2(k)$}{Lg}}\label{sec5.5} Let $\pi=1,\St,\omega_{\psi}^{\pm},\omega_{\psi_\kappa}^{\pm},$ or $\omega_{\psi,\mu}$, where $\mu$ is a character of $E^1$ such that $\mu^2\ne 1$. Then $\pi$ is an irreducible representation of $\SL_2(k)$. We have defined a pair $\pi\times {}^\iota\!\pi\ra \BC$ in Section \ref{sec3}. Considering the representation $\sigma_\pi:=\pi\otimes \omega_\psi$ of $J$, we can then define a bilinear pair $\sigma_\pi\times {}^\iota\!\sigma_\pi\ra \BC$ by 
$$\pair{v_1\otimes \phi_1,v_2\otimes \phi_2}=\pair{v_1,v_2}\pair{\phi_1,\phi_2},v_1,v_2\in \pi,\phi_1,\phi_2\in \omega_\psi,$$
where $\pair{\phi_1,\phi_2}$ is defined in Lemma \ref{lem5.3}. This pair satisfies the property 
$$\pair{\sigma_\pi(j)v,\sigma_\pi({}^\iota\! j)v'}=\pair{v,v'},v,v'\in \sigma_\pi,j\in J.$$ 
As in the $\Sp_4$ case, we define a transpose ${}^t: \End_{\mathbb{C}}(\sigma_\pi)\ra \End_{\mathbb{C}}(\sigma_\pi)$ by 
$$\pair{{}^t\!Av,v'}=\pair{v,Av'}, A\in \End_{\mathbb{C}}(\sigma_\pi),v,v'\in \sigma_\pi.$$
Then we have 
$${}^t\! \sigma_\pi(j)=\sigma_\pi({}^t\! j),\forall j\in J.$$
By Proposition \ref{prop2.1}, the representation $\sigma_\pi|_{\SL_2(k)}$ is multiplicity free. Let $\sigma_\pi|_{\SL_2(k)}=\oplus_i V_i$ be the decomposition of irreducible representations of $\SL_2(k)$.  As in Lemma \ref{lem3.9}, the idempotent $\id_{V_i}\in \End_{\mathbb{C}}(\sigma_\pi)$ is invariant under the transpose ${}^t$ defined above.

\section{Certain Multiplicity one theorems for \texorpdfstring{$\bG_2$}{Lg}}\label{sec6}
In this section, we continue to let $G=\bG_2(k)$ and let $J$ be the Fourier-Jacobi subgroup of $G$ defined in Section \ref{sec5.2}. 
For an irreducible representation $\pi$ of $\SL_2(k)$, as in Section \ref{sec5.5}, 
we write $\sigma_\pi=\pi\otimes \omega_\psi.$
\begin{thm}\label{thm6.1}
The representation $\Ind_J^{G}(\sigma_\pi)$ is multiplicity free, if $\pi=1, \St, \omega_{\psi,\mu}, \omega_{\psi}^{\pm},\omega_{\psi_\kappa}^{\pm}$ for any character $\mu$ of $E^1$ with $\mu^2\ne 1$.
\end{thm}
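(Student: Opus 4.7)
The plan is to apply the Gelfand--Kazhdan method of Section~\ref{sec3.2} with the machinery assembled in Section~\ref{sec5.5}. The anti-involution ${}^\tau$ on $G$ constructed in Section~\ref{sec5.3} preserves $J$, and the transpose ${}^t$ on $\End_{\mathbb{C}}(\sigma_\pi)$ from Section~\ref{sec5.5} satisfies ${}^t\sigma_\pi(j)=\sigma_\pi({}^\tau j)$ for every $j\in J$. Together these give an anti-involution on the convolution algebra $\CA(G,J,\sigma_\pi)$ via $({}^\tau K)(g)={}^t(K({}^\tau g))$. By Corollary~\ref{cor3.4}, multiplicity-freeness of $\Ind_J^G(\sigma_\pi)$ reduces to showing ${}^\tau K(g)=K(g)$ for every $K\in\CA(G,J,\sigma_\pi)$ and every $g$ in a fixed set of representatives of $J\backslash G/J$. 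After the standard reduction $K\mapsto K-{}^\tau K$, it suffices, assuming ${}^\tau K=-K$, to prove $K(g)=0$ on each representative.

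Next I would fix a convenient set of representatives for $J\backslash G/J$. Using Bruhat refined by the parabolic $P=MV$, the double cosets $P\backslash G/P$ are indexed by $W(M)\backslash W(\bG_2)/W(M)$ with $W(M)=\{1,s_\beta\}$, giving three classes with representatives $1$, $w_1$, and $w_2$. Refining by $J\backslash P/J\cong k^\times$ (via the determinant on $M\cong\GL_2(k)$), one expects three one-parameter families of representatives which I shall denote $t(a):=h(a,1)$, $\eta(a):=w_1 h(a,1)$, and $\xi(a):=w_2 h(a,1)$ for $a\in k^\times$, in close analogy with the representatives in the $\Sp_4$ proof of Theorem~\ref{thm4.1}. (Small torus adjustments may be needed so that ${}^\tau g_0=g_0$ holds on the exceptional values of $a$, as in that proof.)

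For each family the argument mirrors the three-step template of the proof of Theorem~\ref{thm4.1}. For $g_0=t(a)$, the conjugation relation $t(a)\bx_{2\alpha+\beta}(r)t(a)^{-1}=\bx_{2\alpha+\beta}(ar)$ (an extension of Eq.~(\ref{eq5.1})) combined with the fact that $\sigma_\pi(\bx_{2\alpha+\beta}(r))$ acts by the scalar $\psi(r)$ immediately forces $K(t(a))=0$ whenever $a\ne 1$; the residual case $a=1$ is handled by the $J$-irreducibility of $\sigma_\pi$, Schur's lemma, the fact that the idempotents attached to the decomposition $\sigma_\pi|_{\SL_2(k)}=\bigoplus V_i$ (Proposition~\ref{prop2.1}) are ${}^t$-fixed (cf.\ Lemma~\ref{lem3.9} and Section~\ref{sec5.5}), and ${}^\tau K=-K$. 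For $g_0=\eta(a)$ and $g_0=\xi(a)$, the $g_0$-conjugation action on the generators $\bx_\gamma(r)$ of $V$ and on $\SL_2(k)$ (controlled by Eqs.~(\ref{eq5.1})--(\ref{eq5.4})), combined with the Weil formulas (\ref{eq5.7})--(\ref{eq5.8}) and the explicit action of $\pi$ on the bases fixed in Sections~\ref{sec3.4}--\ref{sec3.7}, localizes $K(g_0)$ onto a small number of basis vectors; the residual scalars are then killed by pairing against ${}^\iota\sigma_\pi$ (via the symmetric pair constructed in Section~\ref{sec5.5}) together with ${}^\tau K=-K$. The essential input throughout is the multiplicity-freeness of $\sigma_\pi|_{\SL_2(k)}$ from Corollary~\ref{cor2.2}, which is precisely where the hypothesis $\pi\ne I(\chi)$ (with $\chi^2\ne 1$) is used.

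The main obstacle will be the cuspidal case $\pi=\omega_{\psi,\mu}$ at the Weyl representatives $\eta(a)$ and $\xi(a)$: the basis $\{f_b:b\in k^\times\}$ of $\omega_{\psi,\mu}$ involves character sums over $E^\times$, and the conjugation action of $w_1,w_2$ mixes the five root-space components of $V$ in a more intricate way than the corresponding $\Sp_4$ calculation, so the character-theoretic localization is more delicate than in Case~(2.4) of the proof of Theorem~\ref{thm4.1}. A secondary technical point is verifying that $t(a),\eta(a),\xi(a)$ actually form a non-overlapping and exhaustive set of representatives for $J\backslash G/J$, which should follow from a careful application of the Weyl element relations (\ref{eq5.2}) and the commutator relations (\ref{eq5.3}).
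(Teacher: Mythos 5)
Your overall strategy coincides with the paper's (Gelfand--Kazhdan via the anti-involution ${}^\tau g=h(1,-1)g^{-1}h(1,-1)$ and the transposes of Section~\ref{sec5.5}), but there are two concrete gaps. First, your double coset count is wrong: $W(M)\backslash W(\bG_2)/W(M)$ with $W(M)=\{1,s_\beta\}$ has \emph{four} classes, not three, represented by $1$, $s_\alpha$, $s_\alpha s_\beta s_\alpha$ and the length-five element; correspondingly $G=P\cup Ps_\alpha P\cup Pw_1P\cup Pw_2P$ and $J\backslash G/J$ needs \emph{four} one-parameter families (the paper takes $h(a,1)$, $h(a,a^{-2})w_\alpha$, $h(-1,a)w_1$, $h(a,1)w_2$, the torus factors chosen so that each representative is ${}^\tau$-fixed). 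Your list omits the entire $Ps_\alpha P$ family, so commutativity of $\CA(G,J,\sigma_\pi)$ is not established on all of $G$.

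Second, and more substantively, the family $h(a,1)w_2$ cannot be handled by the ``localization onto basis vectors'' template you propose for your Weyl representatives. Since $h(a,1)w_2\, g=g^a\, h(a,1)w_2$ with $g^a=\diag(a,1)g\diag(a,1)^{-1}$, the relation forces $K(h(a,1)w_2)\in\Hom_{\SL_2(k)}(\sigma_\pi,\sigma_\pi^a)$, an intertwining space that is typically nonzero (e.g.\ every summand $V_i\cong 1,\St,I(\chi),\omega_{\psi,\mu}$ satisfies $V_i\cong V_i^a$). Killing these components requires the additional input of the paper's Lemma~\ref{lem6.4}: one must normalize generators $\lambda^a_{V_i}\in\Hom_{\SL_2(k)}(V_i,V_i^a)$ compatibly with the pairings, i.e.\ $\pair{\lambda^a v_1,\lambda^a v_2}=\pair{v_1,v_2}$, and verify case by case that $\sigma_\pi(\diag(a^{-1},a))\circ\lambda^a\circ\lambda^a=\id$ (the constant $d_V^a=1$); only then does ${}^\tau K=-K$ yield $2C_i\pair{v_1,v_2}=0$ and hence $C_i=0$. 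There is also a separate subtlety for $a\notin k^{\times,2}$ and $\pi=\St$, where the summands $\omega_\psi^+$ and $\omega_{\psi_\kappa}^+$ are interchanged by the twist and the relevant pairing degenerates on one side. None of this is anticipated in your outline, whereas you (correctly) flag the cuspidal case at the other Weyl representatives as the hard point --- in the paper that part goes through by the same unipotent-conjugation localization as in $\Sp_4$, and it is the $w_2$ coset that carries the real new difficulty.
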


\begin{rmk}\label{rmk6.2}
\rm{
(1) If $\pi=I(\chi)$ for a character $\chi$ of $k^\times$ with $\chi^2\ne 1$, we can also show that for $q$ large, the induced representation $\Ind_J^G(I(\chi)\otimes \omega_\psi)$ is not multiplicity free as in Remark \ref{rmk4.2}. In fact, let $\chi_1,\chi_2$ be two characters of $k^\times$, we consider the induced representation $I(\chi_1,\chi_2)=\Ind_{B_{\GL_2}}^{\GL_2(k)}(\chi_1\otimes \chi_2)$ of $\GL_2(k)$, where $B_{\GL_2}$ is the upper triangular subgroup of $\GL_2(k)$, and $\chi_1\otimes \chi_2$ is viewed as a character of $B_{\GL_2}$ by 
$$\chi_1\otimes \chi_2\left(\bpm a_1&b\\ &a_2\epm \right)=\chi_1(a_1)\chi_2(a_2).$$
 Recall that $P=M\ltimes V$ with $M\cong \GL_2(k)$. View $I(\chi_1,\chi_2)$ as a representation of $P$ by making $V$ act trivially on it. Then we consider the induced representation $\Ind_P^G(I(\chi_1\otimes \chi_2))$ and  $$\Hom_G(\Ind_J^G(I(\chi)\otimes \omega_\psi),\Ind_P^G(I(\chi_1,\chi_2))).$$
By Frobenius reciprocity, we have 
\begin{align*}
    \Hom_G(\Ind_J^G(I(\chi)\otimes \omega_\psi),\Ind_P^G(I(\chi_1,\chi_2)))&=\Hom_J(I(\chi)\otimes \omega_\psi,\Ind_P^G(I(\chi_1,\chi_2))|_J).
\end{align*}
By Mackey's Theory, see \cite[p.58]{Se}, we have 
$$\Ind_P^G(I(\chi_1,\chi_2))|_J=\bigoplus_{s\in J\backslash G/P}\Ind_{P_s}^J(I(\chi_1,\chi_s)^s), $$
where $P_s=sPs^{-1}\cap J$ and for a representation $\rho$ of $P$, the representation $\rho^s$ of $P_s$
 is defined by $\rho^s(h)=\rho(s^{-1}hs)$. Considering the element $w_2=w_\alpha w_\beta w_\alpha w_\beta^{-1}w_\alpha^{-1}\in J\backslash G/P$, we have $P_{w_2}\cong \SL_2(k)\incl M$ and $I(\chi_1,\chi_2)^{w_2}=I(\chi_1,\chi_2)|_{\SL_2(k)} $. Thus 
 \begin{align*}
    \Hom_G(\Ind_J^G(I(\chi)\otimes \omega_\psi),\Ind_P^G(I(\chi_1,\chi_2)))&\supset\Hom_J(I(\chi)\otimes \omega_\psi,\Ind_{P_{w_2}}^J(I(\chi_1,\chi_2)|_{\SL_2(k)}))\\
    &=\Hom_{\SL_2(k)}(I(\chi)\otimes \omega_\psi|_{\SL_2(k)}, I(\chi_1,\chi_2)|_{\SL_2(k)} ).
\end{align*}
 Note that $I(\chi_1,\chi_2)|_{\SL_2(k)} =I(\chi_1\chi_2^{-1})$. We take $\chi_1=\epsilon \chi \chi_2$. By Proposition \ref{prop2.1}, we have
 $$ \dim \Hom_G(\Ind_J^G(I(\chi)\otimes \omega_\psi),\Ind_P^G(I(\epsilon \chi\chi_2 ,\chi_2)))\ge \dim \Hom_{\SL_2(k)}(I(\chi)\otimes \omega_\psi|_{\SL_2(k)}, I(\epsilon \chi) )=2.$$
 
 From Mackey's irreducibility criterion, see \cite[p.59]{Se}, for $\chi_1,\chi_2$ in ``general positions", the induced representation $\Ind_P^G(I(\chi_1,\chi_2))$ is irreducible. Here $\chi_1,\chi_2$ are called in general position, if $(\chi_1\otimes \chi_2)\ne (\chi_1\otimes \chi_2)^w$ for all $w\in W(G)-\wpair{1}$, where $(\chi_1\otimes\chi_2)$ is viewed as a character of the maximal torus of $G$ via $(\chi_1\otimes\chi_2)(h(a,b))=\chi_1(a)\chi_2(b)$. One can check that, for $q$ large, it is easy to find $\chi_2$ such that $\epsilon \chi\chi_2,\chi_2$ are in general positions. Then $\Ind_P^G(I(\epsilon\chi \chi_2,\chi_2))$ is irreducible and $\dim \Hom_G(\Ind_J^G(I(\chi)\otimes \omega_\psi),\Ind_P^G(I(\epsilon\chi \chi_2,\chi_2)))\ge 2 $. Thus $\Ind_J^G(I(\chi)\otimes \omega_\psi)$ is not multiplicity free. 
 
  (2) For an irreducible representation $\Pi$ of $G$, it is in general false that $\dim\Hom_J(\Pi,I(\chi)\otimes \omega_\psi)\le 1$ by the above discussion. However, if we require further that $\Pi$ is \textit{cuspidal}, in \cite{LZ}, we are able to show that the multiplicity one result $\dim\Hom_J(\Pi,I(\chi)\otimes \omega_\psi)\le 1 $ still holds. 
  This multiplicity one result will be used in \cite{LZ} to define twisted gamma factors for irreducible generic cuspidal representations of $G$. 
  \qed}
 \end{rmk}

 Before proving Theorem \ref{thm6.1}, we give a set of representatives of the double coset decomposition $J\backslash G/J$. Recall that $w_1=w_\alpha w_\beta w_\alpha^{-1}, w_2=w_\alpha w_\beta w_\alpha w_\beta^{-1}w_\alpha^{-1}.$

\begin{lem}\label{lem6.3}
A set of representatives of $J\backslash G/J$ is given by
$$\wpair{h(a,1), h(a,a^{-2})w_\alpha, h(-1,a)w_1, h(a,1)w_2,a\in k^\times}.$$
\end{lem}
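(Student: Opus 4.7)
The strategy is a two–step Bruhat-type reduction. First I would use the $P$-double coset decomposition of $G$. The Weyl group $W(\bG_2)$ is dihedral of order $12$ generated by $s_\alpha, s_\beta$, and the parabolic subgroup has Weyl group $W_P=\{1,s_\beta\}$. Direct enumeration shows that $W_P\backslash W/W_P$ has four classes with minimal-length representatives $1, s_\alpha, s_\alpha s_\beta s_\alpha, s_\alpha s_\beta s_\alpha s_\beta s_\alpha$, which are lifted in $G$ to $1, w_\alpha, w_1, w_2$ by the definitions in Section~6.1. Hence
$$ G = P \sqcup P w_\alpha P \sqcup P w_1 P \sqcup P w_2 P. $$

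Second, since $J=\SL_2\ltimes V$ is normal in $P=M\ltimes V$ with $M\cong\GL_2$, and $P/J\cong\GL_2/\SL_2\cong k^\times$ via the determinant of the $M$-part, the section $T_0=\{h(a,1):a\in k^\times\}$ gives $P=T_0\cdot J=J\cdot T_0$. For $x\in P$ the $J$-double coset is simply $J h(a,1)$ for a unique $a\in k^\times$, producing the first family. For $x\in PwP$ with $w\in\{w_\alpha,w_1,w_2\}$, writing $x=p_1 w p_2$ and using $p_i\in T_0\cdot J$ gives $JxJ=Jh(a_1,1)\cdot w\cdot h(a_2,1) J$ for some $a_1,a_2\in k^\times$. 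I then apply the commutation relations (5.2) iteratively to move $h(a_2,1)$ across $w$ and collapse the product to a single torus element times $w$ modulo~$J$.

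Explicitly, (5.2) gives $w_\alpha h(t_1,t_2)w_\alpha^{-1}=h(t_1t_2,t_2^{-1})$ and $w_\beta h(t_1,t_2)w_\beta^{-1}=h(t_2,t_1)$; iterating one obtains $w_1 h(t_1,t_2)w_1^{-1}=h(t_1,(t_1t_2)^{-1})$ and $w_2 h(t_1,t_2)w_2^{-1}=h(t_2^{-1},t_1^{-1})$. Applied to $h(a_2,1)$ and composed with $h(a_1,1)$, these identities put $JxJ$ respectively into the forms $Jh(a_1a_2,1)w_\alpha J$, $Jh(a_1a_2,a_2^{-1})w_1 J$, and $Jh(a_1,a_2^{-1})w_2 J$.

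Finally I normalize the resulting torus element modulo $T\cap J=\{h_\beta(s)=h(s,s^{-1}):s\in k^\times\}$, using left absorption by $h_\beta(s)$ and, for $w_1$ and $w_2$, also right absorption achieved by conjugating $h_\beta(s')\in J$ across $w$ (which by (5.2) is again a torus element). For $w_\alpha$, using $h(c,1)=h_\alpha(c)h_\beta(c^2)$ with $h_\beta(c^2)\in\SL_2\subset J$ together with the identity $h_\alpha(a^{-1})=h(a,a^{-2})$ brings every element to the form $h(a,a^{-2})w_\alpha$. For $w_1$, choosing $s$ so that the first torus coordinate of $h_\beta(s)^{-1}h(a_1a_2,a_2^{-1})$ becomes $-1$ yields $h(-1,a)w_1$. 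For $w_2$, choosing $s$ so that the second torus coordinate of $h_\beta(s)^{-1}h(a_1,a_2^{-1})$ becomes $1$ yields $h(a,1)w_2$. The main obstacle is precisely this torus bookkeeping for $w_1$ and $w_2$: deriving the explicit formulas $w_i h(t_1,t_2)w_i^{-1}$ by iterating (5.2) and then tracking the correct normalization (in particular the $-1$ that appears in $h(-1,a)w_1$) demands careful application of the braid/conjugation identities in the Chevalley relations.
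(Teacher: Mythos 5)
Your proposal is correct and follows essentially the same route as the paper: reduce to the Bruhat decomposition $G=P\cup Pw_\alpha P\cup Pw_1P\cup Pw_2P$, use $P=T_0J=JT_0$ with $T_0=\{h(a,1)\}$, and then normalize the torus part modulo $T\cap J=\{h(s,s^{-1})\}$ via the conjugation formulas \eqref{eq5.2}. The paper's proof is just a terser version of this (it additionally remarks that the particular normalizations $h(a,a^{-2})w_\alpha$ and $h(-1,a)w_1$ are chosen to be ${}^\tau$-fixed, which matters for the later application but not for the lemma itself).
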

\begin{proof}
Considering the double coset decomposition $P\backslash G/P$, we have 
$$G=Pw_2P \cup Pw_1P \cup Pw_\alpha P\cup P.$$
From the fact $P=\cup_{a\in k^\times}h(a,1)J=\cup_{b\in k^\times}Jh(1,b)$, we can get the statement of the lemma. Note that $h(a,1/a)\in J$. Thus $h(a,b)w$, $h(ab,1)w$ and $h(1,ab)w$ are in the same double coset $Jh(a,b)wJ$ for any Weyl element $w$. We take $h(a,a^{-2})w_\alpha$ and $h(-1,a)w_1$ as representatives because we have ${}^\tau\! (h(a,a^{-2})w_\alpha)=h(a,a^{-2})w_\alpha$ and ${}^\tau \! (h(-1,a)w_1)=h(-1,a)w_1$.
\end{proof}

Before we start the proof of Theorem \ref{thm6.1}, we show one more lemma as follows. Let $(\rho,V)$ be an irreducible representation of $\SL_2(k)$ and $a\in k^\times$. Let $(\rho^a,V^a)$ be the representation of $\SL_2(k)$ defined by $V^a=V,\rho^a(g)=\rho(g^a)$, where $g^a=\diag(a,1)g\diag(a^{-1},1)$. For each $(\rho,V)$, we fix a non-trivial pair $\pair{~,~}$ on $V$ such that 
$$\pair{\rho(g)v,\rho({}^\iota\!g) v'}=\pair{v,v'},\forall v,v'\in V,g\in \SL_2(k).$$
Note that defining such a pair is equivalent to defining an isomorphism ${}^\iota\!\rho\cong \tilde \rho$, and thus such a pair is unique up to a scalar. 

\begin{lem}\label{lem6.4}
 If $\Hom_{\SL_2(k)}(\rho,\rho^a)\ne 0$, there exists a unique nonzero $\lambda^a\in \Hom_{\SL_2(k)}(\rho,\rho^a)$ such that 
$$\pair{\lambda^a v_1,\lambda^a v_2}=\pair{v_1,v_2},\forall v_1,v_2\in V,$$
where $\lambda^a v$ is viewed as an element in $V$ under the identification $V=V^a$ for $v\in V$.

Moreover, for the unique $\lambda^a$ defined above, let $d^a_V$ be the constant such that $\rho(\diag(a^{-1},a))\circ \lambda^a\circ \lambda^a=d_V^a\id_V$, then $d_V^a=1$. Here, the middle $\lambda^a$ is viewed as an element of $\Hom_{\SL_2(k)}(\rho^a,\rho^{a^2})$.
\end{lem}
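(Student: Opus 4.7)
The plan is to prove the two claims of the lemma — the existence of the normalized intertwiner $\lambda^a$, and the evaluation $d_V^a=1$ — in turn.

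For the existence, since $\rho$ is irreducible, so is $\rho^a$, and by Schur's lemma $\Hom_{\SL_2(k)}(\rho,\rho^a)$ is at most one-dimensional; by hypothesis it is exactly one-dimensional, so we may fix some nonzero intertwiner $L_0:V\to V^a$. Consider the bilinear form $B(v_1,v_2):=\pair{L_0v_1,L_0v_2}$ on $V$. Using the identity $({}^\iota g)^a={}^\iota(g^a)$, which holds because $d_1=\diag(-1,1)$ commutes with $\diag(a,1)$, one checks $B(\rho(g)v_1,\rho({}^\iota g)v_2)=B(v_1,v_2)$ for all $g\in\SL_2(k)$. Since the space of such invariant bilinear forms on $V$ is one-dimensional, $B=c\pair{~,~}$ for some $c\in\BC^\times$ (nonzero because $L_0$ is an isomorphism and $\pair{~,~}$ is non-degenerate). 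Choosing $\mu\in\BC^\times$ with $\mu^2c=1$ and setting $\lambda^a:=\mu L_0$ gives an intertwiner satisfying the required normalization.

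For the second claim, write $L=\lambda^a$ and iterate the intertwining relation $L\rho(g)=\rho(g^a)L$ to obtain $L^2\rho(g)=\rho(g^{a^2})L^2$. Decomposing $\diag(a^2,1)=aI\cdot\diag(a,a^{-1})$, with $aI$ central in $\GL_2$, one finds $\rho(g^{a^2})=\rho(\diag(a,a^{-1}))\rho(g)\rho(\diag(a^{-1},a))$, so $\rho(\diag(a^{-1},a))\circ L^2$ commutes with the $\SL_2(k)$-action and hence equals $d_V^a\id_V$ by Schur's lemma. Equivalently, $L^2=d_V^a\rho(\diag(a,a^{-1}))$. To evaluate $d_V^a$, compute $\pair{L^2v_1,L^2v_2}$ in two ways: iterating the normalization $\pair{Lv_1,Lv_2}=\pair{v_1,v_2}$ twice gives $\pair{v_1,v_2}$; on the other hand, substituting $L^2=d_V^a\rho(\diag(a,a^{-1}))$ and using that $\diag(a,a^{-1})$ is fixed by ${}^\iota$ (so that $\rho(\diag(a,a^{-1}))$ preserves $\pair{~,~}$) yields $(d_V^a)^2\pair{v_1,v_2}$. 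Therefore $(d_V^a)^2=1$.

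The main obstacle will be ruling out $d_V^a=-1$, since the residual sign ambiguity in $\lambda^a$ does not affect $L^2$ and so cannot be absorbed by rescaling. To resolve it I would argue case by case, using the explicit models for $\rho\in\{1,\St,\omega_\psi^\pm,\omega_{\psi_\kappa}^\pm,\omega_{\psi,\mu}\}$ from Sections \ref{sec2.2} and \ref{cuspidal representations}. In each case, conjugation by $\diag(a,1)\in\GL_2(k)$ can be realized as an explicit change of variables on the model space — essentially extending $\rho$ to a suitable action of $\GL_2(k)$ — which provides a concrete $L_0$ whose square is visibly $\rho(\diag(a,a^{-1}))$, forcing $c=1$ and $d_V^a=+1$ with the natural normalization.
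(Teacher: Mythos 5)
Your proposal follows essentially the same route as the paper: the normalized $\lambda^a$ is obtained exactly as in the paper by rescaling an arbitrary intertwiner against the (unique up to scalar) invariant pairing, and the evaluation $d_V^a=1$ is, as in the paper, ultimately a case-by-case verification on the explicit models (the paper writes out $\omega_{\psi,\mu}$ and $I(\chi)$ and explicitly states it has no uniform argument). Your added a priori reduction $(d_V^a)^2=1$ is correct but does not shortcut the delicate part, and one small imprecision is worth flagging: exhibiting a concrete $L_0$ with $L_0^2=\rho(\diag(a,a^{-1}))$ only forces $c^2=1$ rather than $c=1$, so in each case you must still check directly that the explicitly normalized operator preserves $\pair{~,~}$ (e.g.\ the factor $\sqrt{\mu^{-1}(x_a^{q-1})}$ in the $\omega_{\psi,\mu}$ case), which is precisely the computation the paper carries out.
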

\begin{proof}
We first fix any nonzero $\lambda_1^a\in \Hom_{\SL_2(k)}(\rho,\rho^a)$ and consider the pair $\pair{~,~}_a$ on $V$ defined by 
$$\pair{v_1,v_2}_a:=\pair{\lambda_1^av_1,\lambda_1^a v_2}.$$
For $g\in \SL_2(k)$, one can check that 
\begin{align*}
  \pair{\rho(g)v_1,\rho({}^\iota g)v_2}_a&=\pair{\lambda_1^a \rho(g)v_1,\lambda_1^a \rho({}^\iota\!g)v_2}\\
  &=\pair{\rho(g^a)(\lambda_1^a v_1),\rho(({}^\iota\! g)^a)(\lambda_1^av_2)}\\
  &=\pair{\rho(g^a)(\lambda_1^av_1),\rho({}^\iota(g^a))(\lambda_1^a v_2)}\\
  &=\pair{\lambda_1^av_1,\lambda_1^av_2}\\
  &=\pair{v_1,v_2}_a,
\end{align*}
where we used the relation ${}^\iota\!(g^a)=({}^\iota\!g)^a$. Thus by the  uniqueness of the pair, we have that there exists a constant $c_a\in \BC^\times$ such that $\pair{v_1,v_2}_a=c_a \pair{v_1,v_2}$. Then $\lambda^a:=\frac{1}{\sqrt{c_a}}\lambda_1^a\in \Hom_{\SL_2(k)}(\rho,\rho^a)$ satisfies the property
$$\pair{\lambda^av_1,\lambda^av_2}=\pair{v_1,v_2},\forall v_1,v_2\in V,$$
where $\sqrt{c_a}$ is a square root of $c_a$. The uniqueness of such $\lambda^a$ follows from the fact that $$\dim\Hom_{\SL_2(k)}(\rho,\rho^a)=1.$$ 

The ``moreover" part seems very delicate and we don't have a uniform proof at this moment. We will check it case by case.

We first consider that $(\rho,V)=\omega_{\psi,\mu}$ for a quadratic character $\mu$ of $E^1$ with $\mu^2\ne 1$. For $a\in F^\times$, we fix $x_a\in E^\times$ with $\Nm(x_a)=x_a^{q+1}=a$. Recall that an element $f\in V$ is a function on $E^\times$ such that $f(uy)=\mu^{-1}(u)f(y),\forall u\in E^1,y\in E^\times$. For $f\in V$, we consider $\lambda^af(\xi)=\sqrt{\mu^{-1}(x_a^{q-1})}f(x_a\xi).$ Then we can check that $\pair{\lambda^a f_1,\lambda^a f_2}=\pair{f_1,f_2}$, where $\pair{f_1,f_2}$ is the pair defined in $\S$\ref{sec3.7}. Moreover,
\begin{align*}
\lambda^a\circ \lambda^a f(\xi)
&=\mu^{-1}(x_a^{q-1})f(x_a^2\xi)\\
&=\mu^{-1}(x_a^{q-1})\mu^{-1}(x_a^2a^{-1})f(a\xi)\\
&=\mu^{-1}(x_a^{q+1}a^{-1})f(a\xi)\\
&=f(a\xi)\\
&=\rho(\diag(a,a^{-1}))f(\xi).
\end{align*}
Thus $d_V^a=1$ in this case.

Next, we consider the case when $(\rho,V)=I(\chi)$ for a character $\chi$ of $k^\times$. If $\chi$ is not quadratic, we haven't constructed a $\pair{~,~}$ on $I(\chi)$ in previous sections. We first define a pair below. Consider the intertwining operator $\Delta:I(\chi)\ra I(\chi^{-1})$ defined by 
$$\Delta*f(g)=q^{-1}\sum_{x\in k}f\left(w\begin{pmatrix}1&x\\&1 \end{pmatrix}g\right),\forall f\in I(\chi),$$
where $w=\bpm &1\\-1&\epm.$
One can check that $\Delta*f\in I(\chi^{-1})$. For $f_1,f_2\in I(\chi)$, we define 
$$\pair{f_1,f_2}=\sum_{g\in B_{\SL_2(k)}\backslash \SL_2(k)}\Delta*f_1(g)f_2({}^\iota\!g).$$
Then, one can check that 
$$\pair{\rho(g)f_1,\rho({}^\iota\!g)f_2}=\pair{f_1,f_2},\forall f_1,f_2\in I(\chi),g\in \SL_2(k).$$
For $a\in k^\times$, $f\in I(\chi)$, we define 
$$\lambda^af(g)=\sqrt{\chi(a)}f(g^{a^{-1}}).$$
Then $\lambda^a\in \Hom_{\SL_2(k)}(V,V^a)$, and
$$\Delta*\lambda^af(g)=\chi(a^{-1})\Delta*f(g^{a^{-1}}).$$
It follows that 
$$\pair{\lambda^af_1,\lambda^af_2}=\pair{f_1,f_2}.$$
On the other hand, we have 
\begin{align*}
    \lambda^a\circ \lambda^af(g)&=\chi(a)f(g^{a^{-2}})\\
    &=\chi(a)f(\diag(a^{-1},a)g\diag(a,a^{-1}))\\
    &=f(g\diag(a,a^{-1}))\\
    &=\rho(\diag(a,a^{-1}))f(g).
\end{align*}
Thus we get $d_V^a=1$. 

We omit the cases when $\rho=1,\St,$ and just remark that the proofs in these cases are similar.
\end{proof}

In the following, for $a\in k^\times$, we write $\eta(a)=h(a,a^{-2})w_\alpha$, and $\xi(a)=h(-1,a)w_1$, for simplicity.

\noindent\textbf{Proof of Theorem $\ref{thm6.1}$.}
We have defined an anti-involution ${}^\tau$ on $G$ by ${}^\tau\! g=h(1,-1)g^{-1}h(1,-1)$, see Section \ref{sec5.3}. In Section \ref{sec5.5}, we have constructed an anti-involution ${}^t$ on $\End(\sigma_\pi)$ such that ${}^t\! (\sigma_\pi(j))=\sigma_\pi({}^\tau \! j)$. We define an anti-involution ${}^\tau$ on $\CA(G,J,\sigma_\pi)$ by $$({}^\tau \! K)(g)={}^t\! ( K({}^\tau\! g)),\forall g\in G, K\in \CA(G,J,\sigma_\pi).$$
By Corollary \ref{cor3.4} and Lemma \ref{lem6.3}, it suffices to show that for all $K\in \CA(G,J,\omega_\psi)$ with ${}^\tau \! K=-K$, $K(g)=0$ for all $g= h(a,1), \eta(a), \xi(a),h(a,1)w_2,$ $\forall a\in k^\times$. We now fix a $K\in \CA(G,J,\omega_\psi)$ with ${}^\tau \! K=-K$.

Step (1), $K(h(a,1))=0$.
We have $$h(a,1)\bx_{2\alpha+\beta}(z)=\bx_{2\alpha+\beta}(az)h(a,1),$$
see Eq.(\ref{eq5.1}). Thus we get 
$$\psi(z)K(h(a,1))=\psi(az)K(h(a,1)).$$
If $a\ne 1$, we can take $z$ such that $\psi(z)\ne \psi(az)$. Then we get $K(h(a,1))=0$ if $a\ne 1$. If $a=1$, we have $h(1,1)g=gh(1,1)$ for all $g\in J$. Thus we get $K(h(1,1))= C\sigma_\pi(1)$ for some $C\in \BC$ by Schur's Lemma. We can get $C=0$ from the condition ${}^\tau \! K=-K.$

Step (2), $K(\eta(a))=0.$ From Eqs.(\ref{eq5.1})-(\ref{eq5.4}), we can check the relations
\begin{align*}\eta(a) (0,x,y,0,0,0)&=(0,-ay,x/a,0,3 xy) \eta(a), \forall x,y\in k,\\
\eta(a) \bx_\beta(b)&=(0,0,0,a^{-3}b,0) \eta(a), \forall b\in k,\\
 \bx_\beta(x) \eta(a)&=\eta(a) (0,0,0,-x/a^3,0), \forall x\in k.
\end{align*}
Thus, we have that
\begin{align}\label{eq6.1}
K(\eta(a))\sigma_\pi((0,x,y,0,0,0))&=\sigma_\pi((0,-ay,a^{-1}x,0,3 xy))K(\eta(a)), \forall x,y\in k,\\
\label{eq6.2}K(\eta(a))\sigma_\pi(\bx_\beta(b))&=K(\eta(a)), \forall b\in k,\\
\label{eq6.3}\sigma_\pi(\bx_\beta(x))K(\eta(a))&=K(\eta(a)), \forall x\in k. \end{align} 
We now consider different $\pi$ separately. 

Case (2.1), $\pi=1$. Put $y=0$ in Eq.(\ref{eq6.1}) and apply Eq.(\ref{eq5.8}), we can get 
$$\psi(-2sx)K(\eta(a))\delta_s=\psi(a^{-1}x)K(\eta(a))\delta_s.$$
We then get $K(\eta(a))\delta_s=0$ if $s\ne -1/(2a)$. Applying Eq.(\ref{eq6.2}) and Eq.(\ref{eq5.8}), we get $\psi(bs^2)K(\eta(a))\delta_s=K(\eta(a))\delta_s$. Thus if $s\ne 0$, we get $K(\eta(a))\delta_s=0$. Since $-1/(2a)\ne 0$, we have $K(\eta(a))\delta_s=0$ for all $s\in k$.

Case (2.2), $\pi=\omega_{\psi,\mu}$. Recall that $\omega_{\psi,\mu}$ has a basis $\wpair{f_b,b\in k^\times }$, where $f_a(x_b)=\delta_{a,b}$, see Section \ref{sec3.7}. We can check that $\sigma_\pi(\bx_\beta(b))f_r\otimes \delta_s=\psi(b(r+s^2))f_r\otimes \delta_s.$ Plugging $y=0$ into Eq.(\ref{eq6.1}), we can get $\psi(-2xs)K(\eta(a))f_r\otimes \delta_s=\psi(a^{-1}x)K(\eta(a))f_r\otimes \delta_s$. Thus we get $K(\eta(a))f_r\otimes \delta_s=0$ if $s\ne -1/(2a)$. By Eq.(\ref{eq6.2}), we can get $\psi(b(r+s^2))K(\eta(a))f_r\otimes \delta_s=K(\eta(a))f_r\otimes \delta_s$. Thus we get $K(h(a,1)w_\alpha)f_r\otimes \delta_s=0$ if $r\ne -s^2$. We assume that 
$$K(h(a,1)w_\alpha)f_{-1/(4a^2)}\otimes \delta_{-1/(2a)}=\sum_{b\in k^\times,t\in k}C(b,t)f_b\otimes \delta_t$$ for $C(b,t)\in \BC$. Applying Eq.(\ref{eq6.1}) when $x=0$, we can get
$$\sum_{b,t}\psi(y)C(b,t)f_b\otimes \delta_t=\sum_{b,t}\psi(2ayt)C(b,t)f_b\otimes \delta_t. $$
Thus we get $C(b,t)=0$ if $t\ne 1/(2a)$. Applying Eq.(\ref{eq6.3}), we can get $C(b,1/(2a))=0$ if $b\ne -1/(4a^2)$. We denote $D(a)=C(-1/(4a^2),1/(2a))$. To summarize, we have that 
\begin{align*}
K(\eta(a))f_r\otimes \delta_s&=0, \textrm{ if } s\ne -1/(2a), \textrm{ or } r\ne -1/(4a^2),\\
K(\eta(a))f_{-1/(4a^2)}\otimes \delta_{-1/(2a)}&=D(a)f_{-1/(4a^2)}\otimes \delta_{1/(2a)}.
\end{align*}

Note that ${}^\tau\! K(\eta(a))={}^t\! K(\eta(a))$ since ${}^\tau\! \eta(a)=\eta(a)$. Since ${}^\tau\!K+K=0$ and the pair $\pair{~,~}$ on $\sigma_\pi$ is symmetric, we get 
 $ \pair{K(\eta(a))f_{-1/(4a^2)}\otimes \delta_{-1/(2a)},f_{-1/(4a^2)}\otimes \delta_{-1/(2a)}}=0$.  By Eq.(\ref{eq3.7}) and Eq.(\ref{eq5.9}), we have $$\pair{K(\eta(a))f_{-1/(4a^2)}\otimes \delta_{-1/(2a)},f_{-1/(4a^2)}\otimes \delta_{-1/(2a)}}=D(a)\mu^{-1}(x_{-1/(4a^2)}^{q-1}).$$ 
 Thus we get $D(a)=0$, which implies that $K(\eta(a))=0.$

Case (2.3), $\pi=\omega_{\psi_u}^{+}$ for $u=1,\kappa$. Recall that $\omega_{u}^+$ has a basis $\wpair{\Delta_0,\Delta_{x},x\in A_0}$, where $A_0\subset k^\times$ is still a set of representatives of $k^\times/\wpair{\pm 1}$, see Section \ref{sec3.6}. We can check that $\sigma_\pi(\bx_\beta(b))\Delta_r\otimes \delta_s=\psi(b(ur^2+s^2))\Delta_r\otimes \delta_s,$ see Eq.(\ref{eq4.7}) and Eq.(\ref{eq5.7}). Using Eq.(\ref{eq6.1}) when $y=0$, we can get $K(\eta(a))\Delta_r\otimes \delta_s=0$ if $s\ne -1/(2a)$ as above. Applying Eq.(\ref{eq6.2}), we can get $K(\eta(a))\Delta_r\otimes \delta_s=0$ if $ur^2+s^2\ne 0$. Note that there is at most one $r_0\in A_0\cup \wpair{0}$ such that $ur_0^2+1/(4a^2)=0$. If there is no such $r_0$, we are done. Now assume that there exists $r_0$ with $ur_0^2+1/(4a^2)=0$. We assume that $K(\eta(a))\Delta_{r_0}\otimes \delta_{-1/(2a)}=\sum_{b,t}C(b,t)\Delta_b\otimes \delta_t$. Applying Eq.(\ref{eq6.1}) when $x=0$ we get $$\sum_{b,t}\psi(y)C(b,t)\Delta_b\otimes \delta_t=\sum_{b,t}\psi(2ayt)C(b,t)\Delta_b\otimes \delta_t, \forall y\in k.$$ 
By choosing appropriate $y$ we can see $C(b,t)=0$ if $t\ne 1/(2a)$.  Applying Eq.(\ref{eq6.3}) to the equation $K(\eta(a))\Delta_{r_0}\otimes \delta_{-1/(2a)}=\sum_{b}C(b,1/(2a))\Delta_b\otimes \delta_{1/(2a)}$, we can obtain that
$$\sum_b C(b,1/(2a))\psi(x(ub^2+1/(4a^2)))\Delta_b\otimes \delta_{1/2a}=\sum_b C(b,1/(2a))\Delta_b\otimes \delta_{1/(2a)}, \forall x\in k,$$
which implies that $C(b,1/(2a))=0$ if $b\ne r_0$. Thus we have $K(\eta(a))\Delta_{r_0}\otimes \delta_{-1/(2a)}=C(r_0,1/(2a))\Delta_{r_0}\otimes \delta_{1/(2a)} $. From the condition ${}^\tau\!K+K=0$ and the symmetry of the pair $\pair{~,~}$ on $\sigma_\pi$, we have 
$$\pair{K(\eta(a))\Delta_{r_0}\otimes \delta_{-1/(2a)},\Delta_{r_0}\otimes \delta_{-1/(2a)}}=0. $$ By Eq.(\ref{eq3.5}), Eq.(\ref{eq4.9}) and the above discussion, the above equation implies that $C(r_0,1/(2a))=0$. Thus $K(h(a,1)w_\alpha)=0.$ 

Case (2.4), $\pi=\omega_{\psi_u}^-$. The proof is similar to that the Case (2.3) and thus omitted.

Case (2.5), $\pi=\St$. Recall that $\St$ has a basis $\wpair{F_r,r\in k},$ see Section \ref{sec3.5}. By Eqs.(\ref{eq4.5}, \ref{eq5.8}, \ref{eq6.2}), we have 
$$\psi(bs^2)K(\eta(a))F_{r-b}\otimes \delta_s=K(\eta(a))F_r\otimes \delta_s.$$
In particular, we have $K(\eta(a))F_r\otimes \delta_s=\psi(rs^2)K(\eta(a))F_0\otimes \delta_s$. Plugging $y=0$ into Eq.(\ref{eq6.1}) and applying it to $F_0\otimes \delta_s$, we get $\psi(-2sx)K(\eta(a))F_0\otimes \delta_s=\psi(a^{-1}x)K(\eta(a))F_0\otimes \delta_s$ using Eq.(\ref{eq4.5}) and Eq.(\ref{eq5.8}). By choosing appropriate $x$, we see that $K(h(a,1)w_\alpha)F_0\otimes \delta_s=0$ if $s\ne -1/(2a)$. Denote $a_0=-1/(2a)$. 

 We assume that $K(\eta(a))F_0\otimes \delta_{a_0}=\sum_{b,t\in k}C_s(b,t)F_b\otimes \delta_t$. Plugging $x=0$ into Eq.(\ref{eq6.1}) and applying it to $F_0\otimes\delta_{a_0}$, we can get $\sum_{b,t}\psi(y)C_s(b,t)F_b\otimes \delta_t=\sum_{b,t}C_s(b,t)\psi(2ayt)F_b\otimes \delta_t, \forall y\in k.$ By choosing appropriate $y$, we have $C_s(b,t)=0$ if $t\ne -a_0$. Thus we get $K(\eta(a))F_0\otimes \delta_{a_0}=\sum_{b\in k}D_bF_b\otimes \delta_{-a_0},$ where $D_b=C_{a_0}(b,-a_0)$.
 
 Note that ${}^\tau(\eta(a))=\eta(a),$ thus the condition ${}^\tau\! K+K=0$ implies that $$\pair{K(\eta(a))F_r\otimes \delta_{a_0},F_0\otimes \delta_{a_0}}+\pair{F_r\otimes\delta_{a_0},K(\eta(a)F_0\otimes\delta_{a_0})}=0,\forall r\in k.$$
 In particular, we have $\pair{K(\eta(a))F_0\otimes \delta_{a_0},F_0\otimes \delta_{a_0}}=0 $ and thus,
 $$\pair{K(\eta(a))F_r\otimes \delta_{a_0},F_0\otimes \delta_{a_0}}=\psi(ra_0^2)\pair{K(\eta(a))F_0\otimes \delta_{a_0},F_0\otimes \delta_{a_0}}=0.$$
 The above two equations then imply that 
 $$\pair{F_r\otimes\delta_{a_0},K(\eta(a)F_0\otimes\delta_{a_0})}=0,\forall r\in k. $$
  By Eq.(\ref{eq3.4}) and Eq.(\ref{eq5.9}), $\pair{F_r\otimes\delta_{a_0},K(\eta(a)F_0\otimes\delta_{a_0})}=D_{-r}+(q-1)\sum_{b\in k}D_b$. Thus we get $D_{-r}+(q-1)\sum_{b\in k}D_b=0, \forall r\in k.$ A direct calculation shows that $D_b=0,\forall b\in k$. Hence, $K(\eta(a))=0.$ This completes the proof of Step (2).

Step (3), $K(\xi(a))=0$. Recall that $\xi(a)=h(-1,a)w_1$ and $w_1=w_\alpha w_\beta w_\alpha^{-1}$.
We can check the relations
\begin{align*}\xi(a)\bx_\beta(y)&=(0,0,0,0,-ay)\xi(a),\\
\xi(a)(0,0,0,0,-ay)&=\bx_\beta(y)\xi(a),\\
\xi(a) \bx_{\alpha+\beta}(x)&= \bx_{\alpha+\beta}(-x)\xi(a).
\end{align*}
Thus we have 
\begin{align}
\label{eq6.4}K(\xi(a))\sigma_\pi(\bx_\beta(y))&=K(\xi(a)), \forall y\in k,\\
\label{eq6.5} K(\xi(a))&=\sigma_\pi(\bx_\beta(y))K(\xi(a)), \forall y\in k, \\
\label{eq6.6}K(\xi(a))\sigma_\pi(\bx_{\alpha+\beta}(x))&=\sigma_\pi(\bx_{\alpha+\beta}(-x))K(\xi(a)), \forall x\in k.\end{align}
In the following, we still consider different $\pi$ separately. 

Case (3.1), $\pi=1.$
Applying Eq.(\ref{eq6.4}) to $\delta_s$, we get 
$$\psi(ys^2)K(\xi(a))\delta_s=K(\xi(a))\delta_s,\forall y\in k. $$
For $s\ne 0$, we can take $y$ such that $\psi(ys^2)\ne 1$. Thus we have
$$K(\xi(a))\delta_s=0, \textrm{ if }s\ne 0.$$
Suppose that $K(\xi(a))\delta_0=\sum_{s\in k}C(s) \delta_s, $
where $C(s)\in \BC$. Applying Eq.(\ref{eq6.5}) to $\delta_0$, we have
$$ \sum_{s\in k}C(s)\delta_s=K(\xi(a))\delta_0=\sigma_\pi(\bx_\beta(y))K(\xi(a))\delta_0=\sum_{s\in k} \psi(ys^2)C(s)\delta_s, \forall y\in k.$$
If $s\ne 0$, take $y\ne 0$ such that $\psi(ys^2\ne 0)$. By comparing coefficients,  we have $C(s)=0$ unless $s=0$. Therefore,
$$K(\xi(a))\delta_s=0, s\ne 0; K(\xi(a))\delta_0=C(0)\delta_0.$$
From the relation ${}^\tau \! (\xi(a))=\xi(a)$, ${}^\tau\! K+K=0$ and the symmetry of the pair $\pair{~}$ on $\sigma_\pi$, we get $\pair{ K(\xi(a))\delta_0,\delta_0}=0$.  By Eq.(\ref{eq5.9}), we have $$ \pair{ K(\xi(a))\delta_0,\delta_0}=\pair{C(0)\delta_0,\delta_0}=C(0).$$ It follows that $C(0)=0$ and $K(\eta(a))=0$.

Case (3.2), $\pi=\omega_{\psi,\mu}$. Applying Eq.(\ref{eq6.4}) to $f_r\otimes\delta_s$, we have $$\psi(y(r+s^2))K(\xi(a))f_r\otimes \delta_s=K(\xi(a))f_r\otimes \delta_s.$$ This implies that $K(\xi(a))f_r\otimes \delta_s=0$ if $r+s^2\ne 0$. Suppose that $K(\xi(a))f_{-s^2}\otimes \delta_s=\sum_{b,t}C_s(b,t)f_b\otimes \delta_t.$ Applying Eq.(\ref{eq6.6}) to $f_{-s^2}\otimes\delta_s$, we have 
$$\sum_{b,t}\psi(-2sx)C_s(b,t)f_b\otimes \delta_t=\sum_{b,t}\psi(2tx)C_s(b,t)f_b\otimes \delta_t, \forall x\in k.$$
An appropriate choice of $x\in k$ implies that $C_s(b,t)=0$ if $t\ne -s$. We get $K(\xi(a))f_{-s^2}\otimes \delta_s=\sum_b C_s(b,-s)f_b\otimes \delta_{-s}.$ Applying Eq.(\ref{eq6.5}) to $f_{-s^2}\otimes\delta_s$, we have
$$ \sum_b C_s(b,-s)f_b\otimes\delta_{-s}=\sum_b C_s(b,-s)\psi(y(b+s^2))f_b\otimes\delta_{-s},\forall y\in k,$$
which implies that $C_s(b,-s)=0$ unless $b=-s^2$. Thus we get $K(\xi(a))f_{-s^2}\otimes \delta_s=D(s)f_{-s^2}\otimes \delta_{-s}$, where $D(s)=C_s(-s^2,-s)$. The condition ${}^\tau\!K+K=0$ implies that $$ \pair{f_{-s^2}\otimes \delta_{s},K(\xi(a))f_{-s^2}\otimes \delta_s}=0.$$  By Eqs.(\ref{eq3.7}) and (\ref{eq5.9}), the above equation implies that $D(s)=0$. Hence, we have $K(\xi(a))=0.$

Case (3.3), $\pi=\omega_{\psi_u}^{\pm}$ for $u=1,\kappa$. The proof is similar to that of Case (3.2) and thus omitted.

Case (3.4), $\pi=\St$. As in the proof of case (2.5), an application of Eq.(\ref{eq6.4}) to $F_r\otimes \delta_s$ shows that $K(\xi(a))F_r\otimes \delta_s=\psi(rs^2)K(\xi(a))F_0\otimes \delta_s$. Suppose that $K(\xi(a))F_0\otimes\delta_s=\sum_{b,t}C_s(b,t)F_b\otimes \delta_t$ for $C_s(b,t)\in \BC$. Applying Eq.(\ref{eq6.6}) to $F_0\otimes\delta_s$ and using Eqs.(\ref{eq4.5}), (\ref{eq5.8}),  we get 
$$\sum_{b,t}\psi(-2xs)C_s(b,t)F_b\otimes \delta_t=\sum_{b,t}C_s(b,t)\psi(2xt)F_b\otimes \delta_t,\forall x\in k.$$
If $t\ne -s$, we can choose $x\in k$ such that $\psi(-2xs)\ne \psi(2xt)$. Consequently, $C_s(b,t)=0$ unless $t=-s$, and $K(\xi(a))F_0\otimes\delta_s=\sum_{b\in k}C_s(b)F_b\otimes \delta_{-s}$, where $C_s(b)=C_s(b,-s)$. An application of Eq.(\ref{eq6.5}) to $F_0\otimes\delta_s$ shows that
$$\sum_{b\in k}C_s(b)F_b\otimes \delta_{-s}=\sum_{b_1\in k}C_s(b_1)\psi(ys^2)F_{b_1-y}\otimes \delta_{-s},\forall y\in k.$$ 
By comparing the coefficients of both sides of the above identity, we get $C_s(b)=\psi(ys^2)C_s(b+y).$ In particular, we have $C_s(b)=\psi(-bs^2)C_s(0)$. Thus $K(\xi(a))F_0\otimes \delta_s=C_s(0)\sum_{b\in k}\psi(-bs^2)F_b\otimes \delta_{-s}.$ As usual, the condition ${}^\tau\!K+K=0$ implies that $\pair{K(\xi(a))F_0\otimes \delta_s,F_0\otimes \delta_s}=0$. By Eq.(\ref{eq3.4}) and Eq.(\ref{eq5.9}), we get
$$C_s(0)\left((q-1)\sum_{b\in k}\psi(-bs^2)+\psi(0)\right)=\pair{K(\xi(a))F_0\otimes \delta_s,F_0\otimes \delta_s}=0.$$ Because $\sum_{b\in k}\psi(-bs^2)=0 $ if $s\ne 0$, and $\sum_{b\in k}\psi(-bs^2)=q$ if $s=0$, we have $C_s(0)=0$. Hence $K(\xi(a))F_0\otimes\delta_s=0$ and $K(\xi(a))F_r\otimes \delta_s=\psi(rs^2)K(\xi(a))F_0\otimes \delta_s=0$. This shows $K(\xi(a))=0$ and completes the proof of Step (3).

Step (4),  $K(h(a,1)w_2)=0$.
We have 
$$h(a,1)w_2 g=g^a h(a,1)w_2,$$
where $$ g^a=\begin{pmatrix}x_{11}& ax_{12}\\a^{-1} x_{21}& x_{22} \end{pmatrix}, \textrm{ for } g=\begin{pmatrix}x_{11}& x_{12}\\ x_{21}& x_{22} \end{pmatrix}\in \SL_2(k).$$
Thus,
$$K(h(a,1)w_2)\sigma_\pi(g)=\sigma_\pi^a(g)K(h(a,1)w_2),\forall g\in \SL_2(k),$$
where $\sigma_\pi^a(g)=\sigma_\pi(g^a).$ The above equation implies that
$$K(h(a,1)w_2)\in \Hom_{\SL_2(k)}(\sigma_\pi,\sigma_\pi^a).$$
We first assume that $a\notin k^{\times,2}$. 

If $\pi=1$, we have $\sigma_\pi|_{\SL_2}=\omega_\psi^+\oplus \omega_\psi^-$ and $\sigma_\pi^a|_{\SL_2(k)}\cong \omega_{\psi_\kappa}^+\oplus \omega_{\psi_\kappa}^-$. We have $$\Hom_{\SL_2(k)}(\sigma_\pi,\sigma_\pi^a)=0,$$ and thus $K(h(a,1)w_2)=0$.

If $\pi=\omega_{\psi,\mu}, \omega_{\psi_u}^{\pm 1}$ for $u=1,\kappa$, then $\Hom_{\SL_2(k)}(\sigma_\pi,\sigma_\pi^a)\ne 0$. In fact, from the decomposition given in Proposition \ref{prop2.1}, we can write $\sigma_\pi|_{\SL_2(k)}=\bigoplus_{i\in I}V_i$, where $I$ is an index set and each $V_i$ is an irreducible representation of $\SL_2(k)$ and occurred with multiplicity one. If $V_i$ is one of $1,I(\chi),\omega_{\psi,\mu}, \St$, then $V_i\cong V_i^a$. By Lemma \ref{lem6.4}, we fix an isomorphism $\lambda_{V_i}^a\in \Hom_{\SL_2(k)}(V_i,V_i^a)$ such that $$\pair{\lambda_{V_i}^a v_1,\lambda_{V_i}^av_2}=\pair{v_1,v_2},\forall v_1,v_2\in V_2.$$ 
If $V_i\cong \omega_\psi^{\pm}$ (resp. $\omega_{\psi_\kappa}^{\pm}$), then $V_i^a\cong \omega_{\psi_\kappa}^{\pm}$ (resp. $\omega_{\psi}^{\pm}$). From the decomposition of $\sigma_\pi|_{\SL_2(k)}$, one sees that 
$$\Hom_{\SL_2(k)}(\sigma_\pi,\sigma_\pi^a)=\bigoplus_{i\in I_1}\Hom(V_i,V_i^a),$$
where $I_1$ is the subset of $I$ such that $V_i$ is $1,\St,I(\chi)$ or $\omega_{\psi,\mu}$ for $i\in I_1$. For example, when $\pi=\omega_{\psi,\mu_1}$ and $\epsilon_0=1$, there are factors $\omega_{\psi_\kappa}^+\oplus \omega_{\psi_\kappa}^-$ in the decomposition $\sigma_\pi|_{\SL_2(k)}$, but these factors are not indexed by $I_1$, in fact, one has $\Hom_{\SL_2(k)}(\omega_{\psi_\kappa}^+\oplus \omega_{\psi_\kappa}^-,(\omega_{\psi_\kappa}^+\oplus \omega_{\psi_\kappa}^-)^a )=0.$ Hence, there are constants $C_i\in \BC$ such that 
$$K(h(a,1)w_2)=\sum_{i\in I_1}C_i\lambda_{V_i}^a.$$ Note that ${}^\tau\! (h(a,1)w_2)=h(1,a)w_2=h(a^{-1},a)h(a,1)w_2$. Thus
$$K(h(1,a)w_2)=\sum_i C_i\sigma_\pi(h(a^{-1},a))\lambda_{V_i}^a,$$ and the condition $K=-{}^\tau\!K$ implies that $K(h(a,1)w_2)=-({}^\tau\!K)(h(a,1)w_2)=-{}^t\! K(h(1,a)w_2)$. From the definition of the transpose operator, we get $$\pair{v_1,K(h(a,1)w_2)v_2}+\pair{K(h(1,a)w_2)v_1,v_2}=0,\forall v_1,v_2\in \pi\otimes\omega_\psi.$$
In particular, if we choose $v_1,v_2\in V_i$ for a fixed $i\in I_1$, we have 
$$ C_i\left(\pair{v_1,\lambda_{V_i}^a v_2}+\pair{\sigma_\pi(h(a^{-1},a))\lambda_{V_i}^a v_1,v_2}\right)=0,\forall v_1,v_2\in V_i.$$ If we replace $v_1$ by $\lambda_{V_i}^a v_1$, we get 
$$ C_i\left(\pair{\lambda_{V_i}^a v_1,\lambda_{V_i}^a v_2}+\pair{\sigma_\pi(h(a^{-1},a))\lambda_{V_i}^a\lambda_{V_i}^a v_1,v_2}\right)=0,\forall v_1,v_2\in V_i.$$
By Lemma \ref{lem6.4}, we have $\sigma_\pi(h(a^{-1},a))\lambda_{V_i}^a\circ \lambda_{V_i}^a=\id_{V_i}$.   We then get 
$$ C_i\left(\pair{\lambda_{V_i}^a v_1,\lambda_{V_i}^a v_2}+\pair{ v_1,v_2}\right)=0,\forall v_1,v_2\in V_i,$$
i.e.,  
$$2C_i\pair{v_1,v_2}=0,\forall v_1,v_2\in V_i.$$
This implies that $C_i=0$ for all $i$ and thus $K(h(a,1)w_2)=0$. 

Next, we assume that $a\notin k^{\times,2}$ and $\pi=\St$. Without loss of generality, we assume that $a=\kappa^{-1}$. The proof is almost identical to the above case, but for completeness we still provide the details here.  Note that we have that $\sigma_\pi|_{\SL_2(k)}\cong \sigma_\pi^a|_{\SL_2(k)}$. We consider the decomposion $\sigma_\pi|_{\SL_2(k)}$ given in Proposition \ref{prop2.1}:
$$\St\otimes \omega_\psi|_{\SL_2(k)}=\St\bigoplus\left(\bigoplus_{\chi\in A}I(\chi)\right)\bigoplus\left(\bigoplus_{\mu\in B}\omega_{\psi,\mu} \right)\bigoplus \omega_{\psi}^+ \bigoplus \omega_{\psi_\kappa}^+.$$ 
Let $V$ be a summand in the above decomposition. If $V=\St, I(\chi)$ or $\omega_{\psi,\mu}$, we have $V\cong V^a$. In these cases, by Lemma \ref{lem6.4}, we fix a nonzero $\lambda_V^a\in \Hom_{\SL_2(k)}(V,V^a)$ such that $$\pair{\lambda_V^av_1,\lambda_V^av_2}=\pair{v_1,v_2},\forall v_1,v_2\in V.$$ If $V=\omega_\psi^+$, then $V^a\cong \omega_{\psi_\kappa}^+$; and if $V=\omega_{\psi_\kappa}^+$, then $V^a\cong \omega_\psi^+$. In fact, a direct calculation shows that $(\omega_{\psi_\kappa})^a$ is exactly $\omega_\psi$ since we assumed $a=\kappa^{-1}$ (if $a=a_0^2\kappa^{-1}$ for some $a_0\in k^\times$, the two representations $(\omega_{\psi_\kappa})^a$ and $\omega_\psi$ would differ by an inner automorphism).  Let $\lambda_{\omega_\psi^+}^a\in \Hom(\omega_\psi^+,(\omega_{\psi_\kappa}^+)^a)$ and $\lambda_{\omega_{\psi_\kappa}^+}^a\in \Hom_{\SL_2(k)}(\omega_{\psi_\kappa}^+,(\omega_{\psi}^+)^a)$ be the identity maps. 

From the decomposition of $\St\otimes \omega_\psi|_{\SL_2(k)}$, 
we have
\begin{align*}\Hom_{\SL_2(k)}(\sigma_\pi,\sigma_\pi^a)&= \left(\bigoplus_{\chi\in A}\Hom_{\SL_2(k)}(I(\chi),I(\chi)^a)\right)
 \bigoplus\left(\bigoplus_{\mu\in B}\Hom_{\SL_2(k)}(\omega_{\psi,\mu},\omega_{\psi,\mu}^a)\right)\\
&\bigoplus\Hom_{\SL_2(k)}(\St,\St^a)\bigoplus \Hom_{\SL_2(k)}(\omega_\psi^+,(\omega_{\psi_\kappa}^+)^a)\bigoplus\Hom_{\SL_2(k)}(\omega_{\psi_\kappa}^+,(\omega_{\psi}^+)^a).
\end{align*}
By Schur's Lemma, each Hom space on the right hand side of the above equation has dimension 1 and is generated by $\lambda_V^a$ for the corresponding $V$ in the decomposition of $\St\otimes\omega_\psi|_{\SL_2(k)}$. Thus we can write $$K(h(a,1)w_2)=\sum_{V}C_V \lambda_V^a,$$
for some constants $C_V\in \BC$.

Since ${}^\tau\!(h(a,1)w_2)=h(1,a)w_2=h(a^{-1},a)h(a,1)w_2$ and $h(a^{-1},a)\in J$, we have that  $$K(h(1,a)w_2)=\sigma_\pi(h(a^{-1},a))K(h(a,1)w_2).$$
Let $\lambda_V^{a^{-1}}=\sigma_\pi(h(a^{-1},a))\lambda_V^a$, which is an isomorphism in $\Hom_{\SL_2(k)}(V,V^{a^{-1}})$ if $V=\St,I(\chi)$ or $\omega_{\psi,\mu}$, and is an isomorphism in $\Hom_{\SL_2(k)}(\omega_\psi^+,(\omega_{\psi_\kappa}^+)^{a^{-1}})$ (resp. $\Hom_{\SL_2(k)}(\omega_{\psi_\kappa}^+,(\omega_{\psi}^+)^{a^{-1}}) $) if $V=\omega_\psi^+$ (resp. $\omega_{\psi_\kappa}^+$). Then $$K(h(1,a)w_2)=\sum_V C_V \lambda_V^{a^{-1}}.$$ Since $K$ is $\tau$-skew-invariant, 
$$K(h(a,1)w_2)=-({}^\tau\!K)(h(a,1)w_2)=-{}^t\! K(h(1,a)w_2).$$ By the definition of the transpose, 
$$\pair{v_1,K(h(a,1)w_2)v_2}+\pair{K(h(1,a)w_2)v_1,v_2}=0,\forall v_1,v_2\in \St\otimes\omega_\psi.$$
In particular, choosing $v_1,v_2\in V$, where $V$ is still a component of $\sigma_\pi|_{\SL_2(k)}$, we have that 
\begin{equation}\label{equation:C_V}
    \qquad C_V\left(\pair{v_1,\lambda_V^a v_2}+\pair{\lambda_V^{a^{-1}}v_1,v_2}\right)=0,\forall v_1,v_2\in V.
\end{equation}
 As in the previous case, one can show that $C_V=0$ for each $V\ne \omega_{\psi}^+$ or $\omega_{\psi_\kappa}^+$. If $V=\omega_\psi^+$ or $\omega_{\psi_\kappa}^+$, then $\lambda_V^a$ is simply the identity map and $\lambda_V^{a^{-1}}=\sigma_\pi(h(a^{-1},a))$. Taking $v_1=\Delta_s=v_2$ for some $s\in k^\times$ in Eq.(\ref{equation:C_V}), then  $\lambda_V^{a^{-1}}\Delta_s=\epsilon(\kappa)\Delta_{\kappa^{-1}s}=-\Delta_{\kappa^{-1}s}$. Thus we have $\pair{v_1,\lambda_V^a v_2}=2 $ and $\pair{\lambda_V^{a^{-1}}v_1,v_2}=0$. Then Eq.(\ref{equation:C_V}) implies that $C_V=0$. Thus $K(h(a,1)w_2)=0$. 

At last, we assume that $a=a_0^2$, $a_0 \in k^{\times}$. Since $h(a,1)w_2=h(a_0,a_0^{-1})h(a_0,a_0)w_2$ and $h(a_0,a_0^{-1})\in J $, we have 
$$K(h(a,1)w_2)=\sigma_\pi(h(a_0,a_0^{-1}))K(h(a_0,a_0)w_2).$$
Thus it suffices to show $K(h(a_0,a_0)w_2)=0.$ Note that we have the relation 
$$h(a_0,a_0)w_2g=gh(a_0,a_0)w_2, \forall g\in \SL_2(k),$$
which implies that $$K(h(a_0,a_0)w_2)\sigma_\pi(g)=\sigma_\pi(g) K(h(a_0,a_0)w_2),\forall g\in \SL_2(k).$$
Thus $K(h(a_0,a_0)w_2)\in \Hom_{\SL_2(k)}(\sigma_\pi,\sigma_\pi)$. Let $\sigma_\pi|_{\SL_2(k)}=\oplus V_i$ be the decomposition as in Proposition \ref{prop2.1}. We have $\Hom_{\SL_2(k)}(\sigma_\pi,\sigma_\pi)=\oplus \End_{\SL_2(k)}(V_i) $. Thus
$$K(h(a_0,a_0)w_2)=\sum C_i \id_{V_i}.$$
Note that ${}^\tau \! (h(a_0,a_0)w_2)=h(a_0,a_0)w_2,$ and ${}^t \id_{V_i}=\id_{V_i}$ by  the discussion in Section \ref{sec5.5}. Thus 
$$({}^\tau \! K)(h(a_0,a_0)w_2)=
{}^t\! K(h(a_0,a_0)w_2)=\sum_i C_i{}^t \id_{V_i}=K(h(a_0,a_0)w_2).$$
The condition ${}^\tau \! K=-K$ implies that $K(h(a_0,a_0)w_2)=0.$ This completes the proof of Theorem \ref{thm6.1}.
\qed

\end{document}